\documentclass[11pt]{article}
\usepackage{authblk}
\usepackage[english]{babel}
\usepackage{booktabs}
\usepackage[margin=1in]{geometry}
\usepackage[svgnames]{xcolor}
\usepackage{amsthm}
\usepackage{amssymb}
\usepackage{amsmath}
\usepackage{mathtools}
\usepackage{physics}
\usepackage{amsfonts}
\usepackage{graphicx}
\usepackage{float}
\usepackage{caption}
\usepackage{subcaption}
\usepackage{makecell}
\usepackage{verbatim}
\usepackage{multirow}
\usepackage{dsfont}
\usepackage{algorithm}
\usepackage{algorithmic}
\usepackage{url}
\usepackage{natbib}
\usepackage{bbm}
\usepackage{mathrsfs}
\usepackage[textsize = tiny]{todonotes}
\usepackage{subcaption}
\numberwithin{equation}{section}
\usepackage[colorlinks,pdfdisplaydoctitle,linkcolor=blue,citecolor=blue]{hyperref}
\usepackage{tikz}
\usetikzlibrary{decorations.pathreplacing}
\usetikzlibrary{fadings}
\usetikzlibrary{arrows.meta, positioning, shapes, fit, backgrounds, calc}

\newcommand{\Lip}{\mathsf{Lip}}

\newcommand\R{\mathbb R}

\newcommand\scrS{\mathcal S}

\newcommand\scrX{\mathcal X}
\newcommand\scrY{\mathcal Y}
\newcommand\scrD{\mathcal D}

\newcommand\Ex{\mathbb E}

\newcommand\argmin{\arg\min}

\newcommand\eps{\epsilon}

\newcommand{\GhatSL}{\widehat{\mathcal G}^{\rm SL}}
\newcommand{\GhatML}{\widehat{\mathcal G}^{\rm ML}}

\newtheorem{theorem}{Theorem}[section]
\newtheorem{lem}[theorem]{Lemma}
\newtheorem{rem}[theorem]{Remark}
\newtheorem{prop}[theorem]{Proposition}
\newtheorem{cor}[theorem]{Corollary}
\newtheorem{ass}{Assumption}
\newtheorem{definition}{Definition}
\newtheorem{ex}[theorem]{Example}

\title{
Multilevel Monte Carlo for Zeroth-Order Stochastic Bilevel Optimization with Inexact Lower Level Solves
}

\author{Neil K. Chada\thanks{Department of Mathematics, City University of Hong Kong, Hong Kong SAR, \texttt{neilchada123@gmail.com}}  
\qquad Mohammad S. Salehi\thanks{Independent scholar, London, UK, \texttt{sadeghsalehi1997@gmail.com}} \qquad  Mathias Staudigl\thanks{Department of Business Analytics and Decision Sciences, Vienna University of Economics and Business, Welthandelsplatz 2, 1020 Wien, Austria, \texttt{mathias.staudigl@wu.ac.at}} \\  
Simon Weissmann\thanks{Institute of Mathematics, University of Mannheim, 68138 Mannheim, Germany, \texttt{simon.weissmann@uni-mannheim.de}}}
\date{\today}

\begin{document}
\maketitle

\begin{abstract}
    Learning regularization parameters from data leads to stochastic bilevel optimization problems in which each upper-level evaluation requires a computationally expensive, typically inexact, lower-level solve. We propose a derivative-free method that combines Gaussian smoothing of the upper-level objective with multilevel Monte Carlo estimators built from a hierarchy of lower-level solution accuracies. We establish convergence to approximate stationary points of the smoothed objective and derive end-to-end computational complexity bounds in terms of three solver-dependent quantities: approximation bias, coupled level-difference errors, and computational cost. This framework identifies conditions under which multilevel sampling improves upon single-level Monte Carlo. For linearly convergent deterministic solvers, including gradient descent and FISTA, it removes a logarithmic cost factor. For stochastic solvers, final-iterate SGD and Polyak–Ruppert averaged SGD have the same mean-square approximation error rate but different coupled level-difference decay. Under the consecutive hierarchies considered here, final-iterate SGD yields no improvement in the asymptotic complexity exponent, whereas Polyak–Ruppert averaging yields an exponent improvement. We also extend the analysis to proximal and constrained upper-level problems. Numerical experiments in image denoising and computed tomography illustrate reductions in lower-level computational work while maintaining comparable reconstruction quality.
\end{abstract}

\textbf{Keywords:} Bilevel optimization; derivative-free optimization; multilevel Monte Carlo; \\ Gaussian smoothing; inverse problems.

\section{Introduction}\label{sec:introduction}
Variational regularization methods are popular in modern inverse problems \cite{scherzer2009variational}. A critical aspect in this daily routine is the selection of hyperparameters. A by-now common strategy to learn parameters of a variational regularization model from data is \emph{bilevel learning}; see, e.g. \cite{KP13,CCDSV16,DSV17,HKB18} and references in \cite{AMOS19}. Bilevel optimization is a dominant research paradigm in mathematical programming, and is known to be a challenging class of optimization problems \cite{bolte2026geometric}. For the practical implementation in inverse problems, it is thus of utmost importance to have reliable learning algorithms which come with certificates concerning the number of data samples required in the supervised learning task, as well as an estimate on the worst-case run time of numerical algorithms employed to solve the variational reconstruction model for a given candidate set of hyperparameters. In this paper, we are concerned with the development of an integrated simulation and learning environment for \emph{simultaneously} recovering model parameters from data, and model-based tuning of hyperparameters, to effectively perform the variational reconstruction. 
\subsection{Mathematical Formulation}
Let \((\Omega,\mathcal F,\mathbb P)\) be a probability space. Let \(\mathcal X\) be a real Hilbert space of model parameters, \(\mathcal D\) be a real Hilbert space of observation data, and let \(\mathcal Y\subseteq \mathbb R^d\) be a set of hyperparameters. 
We consider random variables \(X:\Omega\to\mathcal X\), \(D:\Omega\to\mathcal D\),  where \(X(\omega)\) denotes the ground truth and \(D(\omega)\) denotes the corresponding observed data. We write
\(Z := (X,D):\Omega\to\mathcal X\times\mathcal D\)
for the training pair and denote its law by \(\mathbb P_Z := \mathbb P\circ Z^{-1}\). Unless stated otherwise, all expectations in the sequel are taken with respect to this probability measure and all measurability statements are understood with respect to the Borel $\sigma$-algebras on the respective spaces. We are interested in linear inverse problems, in which an unknown quantity $X$ is observed only through noisy data $D$, generated by the statistical model
\begin{equation}\label{eq:IP}\tag{IP}
    D(\omega) = A X(\omega) + W(\omega) \qquad \omega \in\Omega,
\end{equation}
where \(A:\mathcal X\to\mathcal D\) is a given and deterministic forward operator and \(W:\Omega\to\mathcal D\) is stochastic observational noise. Given a specific set of hyperparameters \(y\in\mathcal Y\) and measurements $d\in\scrD$, the \emph{reconstruction operator} is a mapping $h:\scrY\times\scrD\to\scrX$, defined as 
\begin{equation}\label{eq:LLP}
    h(y,d)\in \argmin_{x\in\mathcal X}\{ F(x,y,d)\equiv \mathcal L(Ax,d)+\mathcal S_y(x)\}.
    \tag{LLP}
\end{equation}
Here, \(\mathcal L:\mathcal D\times\mathcal D\to\mathbb R\) is a data-fidelity term and \(\mathcal S_y:\mathcal X\to\mathbb R\cup\{+\infty\}\) is a parameter-dependent regularizer. 
In the bilevel learning architecture, we refer to it as the \emph{lower-level problem} \eqref{eq:LLP}. Throughout the paper, we assume that the reconstruction operator $h:\mathcal Y\times\mathcal D\to\mathcal X$ is well-defined and measurable. 
Given a loss function \(g:\mathcal X\times\mathcal X\to\mathbb R\), the \emph{upper-level objective} (ULP) is
\begin{equation}\label{eq:ULP}
\Psi(y):=\mathbb E\big[g(h(y,D),X)\big]:=\int_{\mathcal X\times\mathcal D} g(h(y,s),x)\,\mathbb P_Z({\mathrm d} x,{\mathrm d} s). \tag{ULP}
\end{equation}
In this implicit form, the \emph{bilevel learning problem} (BLP) is therefore
\begin{equation}\label{eq:BOP}
    \min_{y\in\mathcal Y}\Psi(y). \tag{BLP}
\end{equation}
While problem \eqref{eq:BOP} is innocent looking, it is in practice usually very hard to solve. First, the implicit function $y\mapsto \Psi(y)$ is typically non-convex and therefore finding a global optimum of the implicit function $\Psi$ is in general computationally intractable, while even the computation of approximate stationary points can be challenging \cite{zhang2020complexity}. Second, each query of the objective function requires an evaluation of the reconstruction operator $h(y,d)$ for $\mathbb{P}_{Z}$-almost every $d\in\scrD$. This typically involves the numerical solution of the lower-level problem \eqref{eq:LLP}, and thus is computationally demanding.

\subsection{Solution strategy}
In order to overcome the bottlenecks described above, \cite{SWL24} followed a derivative-free optimization approach, in the spirit of \cite{grimm2017discrete,LR25}. The core elements of this algorithmic solution strategy are the basis of this paper, and thus will be briefly described in the following. 

Since the reconstruction operator $h(y,d)$ is defined only implicitly, a direct first-order method for the upper-level objective requires derivatives of the solution map with respect to the hyperparameters. To illustrate this, suppose that $\mathcal Y=\mathbb R^d$, $\mathcal X=\mathbb R^n$, and that the lower-level problem is unconstrained and sufficiently smooth. Formally differentiating \(\Psi(y)=\mathbb E[g(h(y,D),X)]\) gives
\[ \nabla\Psi(y) = \mathbb E[ D_y h(y,D)^\top \nabla_1 g(h(y,D),X)]\,. \]
The lower-level optimality condition is
\[
\nabla_x F(h(y,d),y,d)=0.
\]
Differentiating this identity with respect to $y$ yields
\[ \nabla_{xx}^2F(h(y,d),y,d)D_yh(y,d) + D_y[\nabla_xF](h(y,d),y,d)=0\,.
\]
Whenever the lower-level Hessian is invertible,
\[
D_yh(y,d) = - \bigl(\nabla_{xx}^2F(h(y,d),y,d)\bigr)^{-1} D_y[\nabla_xF](h(y,d),y,d)\,,
\]
and hence
\[ \nabla\Psi(y) = -\mathbb E[ D_y[\nabla_xF](h(y,D),y,D)^\top \bigl(\nabla_{xx}^2F(h(y,D),y,D)\bigr)^{-1} \nabla_1g(h(y,D),X) ].
\]
This equation is the formal justification for the development of popular approximation methods using randomly truncated Neumann series \cite{ghadimi2018approximation}. However, these approximation methods require second-order differentiability of the lower level loss function, and sufficient regularity of the reconstruction operator. In particular, it requires smooth regularizers $\mathcal{S}_{y}(x)$, and unconstrained domains for the optimization. These assumptions are a rare commodity in inverse problems. To cope with non-smooth lower level problems and potential constraints, \emph{derivative-free optimization} (DFO) methods, using only zero-order oracles of the implicit function $\Psi(y)$, are an attractive toolbox \cite{LR25}. The zeroth-order oracle used in \cite{SWL24} employs the Gaussian smoothing technique \cite{nesterov2017random,BalGhad22,berahas2022theoretical}. Let \(U\sim\mathcal N(0,I_d)\) be independent of \(Z\), and define 
\[
    \Psi_\eta(y):= \mathbb E_U[\Psi(y+\eta U)] \qquad \eta>0.
\]
Under standard regularity assumptions (cf. Section \ref{subsec:setup}), \(\Psi_\eta\) is differentiable and its
gradient can be represented using only function evaluations of \(\Psi\):
\[
\nabla\Psi_\eta(y) = \mathbb E_U \Big[\frac{\Psi(y+\eta U)-\Psi(y)}{\eta}U\Big] =\mathbb E
    \left[
        \frac{
            g(h(y+\eta U,D),X)-g(h(y,D),X)
        }{\eta}
        U\right].
\]
where the last expectation is over the product probability space supporting the independent variables \(U\) and \(Z=(X,D)\). This identity shows that \emph{Monte Carlo} (MC) sampling techniques can be employed to obtain unbiased random estimators for the gradient $\nabla\Psi_{\eta}(y)$, whenever we can sample the random pair $(X,D)$ efficiently and the evaluation of the reconstruction operator $h(y,D)$ is not too costly. Under these assumptions, \cite{SWL24} develop a variance reduced zeroth-order proximal gradient type method. It is demonstrated to be a robust solution method, in the sense that it provides approximate solutions to the entire bilevel problem \eqref{eq:BOP}, even if the lower-level reconstruction operator is only available via an inexact oracle. This is realistic when this mapping is retrieved from a solver which returns an approximate value of the optimal parameter $h^{\eps}(y,D)\approx h(y,D)$. However, inexact reconstruction operators lead to an additional bias in the gradient estimator, which needs to be controlled carefully by improving the numerical precision of the reconstruction operator over time. In practice, this typically means longer run time of the employed lower-level solver, and thus increasing computational costs and increasing number of function evaluations. A general overview of methods is provided in \cite{audet2026benchmarking}.

\paragraph{Multilevel Monte Carlo gradient estimators.} 
To improve the computational costs, we adopt a \emph{Multilevel Monte Carlo} (MLMC) sampling strategy. While traditionally MLMC methods have been introduced to solve infinite-dimensional integration problems in connection with stochastic differential equations in mathematical finance \cite{MBG08,MBG15}, it became an attractive variance-reduction technique for stochastic optimization problems in finite- and infinite-dimensional settings; see e.g. \cite{Cliffe11,WWZ2021,VBV2019,HuHe21,rowbottom2026multi}. Adopting the MLMC philosophy, we replace the single accuracy level $\widehat{h}=h^{\eps}$ used in a Monte Carlo sampling approach (hitherto \emph{single-level Monte Carlo} (SLMC)), by a hierarchy \(\widehat h_0,\widehat h_1,\dots,\widehat h_L\). The idea behind this schedule of hierarchies is that higher levels represent more expensive computational models, either from an imposed lower error tolerance, more finely grained discretizations, or other sources of approximation errors. The resulting multilevel estimator uses many samples on cheap coarse levels and fewer samples on expensive fine levels. If most of the variance is captured by the coarse grain estimators, we only need to use fewer samples on the fine grained levels, and thereby reduce the overall computational and sample complexity of the algorithm. Using this structure, we want to understand which lower-level solvers indeed allow us to exploit this cost-variance trade-off. Specifically, the central \emph{research question} addressed in this paper is:
\begin{quote}
\emph{Which properties of the lower-level solver (e.g. gradient descent, accelerated gradient descent, etc.) and the induced hierarchy determine whether multilevel sampling improves the complexity of derivative-free methods for stochastic bilevel optimization?}

\end{quote} 

Our analysis identifies the approximation bias, coupled level-difference errors, and computational cost of the lower-level solver as the fundamental quantities governing whether multilevel estimation improves the complexity of derivative-free bilevel optimization. In particular, our results reveal a qualitative distinction between lower-level algorithms: multilevel sampling does not automatically improve complexity, and its effectiveness depends critically on the decay of coupled level differences.
An overview of our strategy is presented in Figure \ref{fig:mlmc-overview}.

\begin{figure}[htbp]
\centering
\resizebox{0.85\textwidth}{!}{%
\begin{tikzpicture}[
    block/.style={rectangle, draw, thick, rounded corners=2pt, 
                  minimum width=2.2cm, minimum height=0.7cm, align=center, 
                  font=\scriptsize\sffamily},
    bottleneck/.style={block, fill=red!10, draw=red!60!black},
    solution/.style={block, fill=blue!10, draw=blue!60!black},
    arrow/.style={-{Stealth[length=2mm]}, thick},
    level/.style={rectangle, draw=gray!50, dashed, rounded corners=1pt,
                  minimum width=1.0cm, minimum height=0.4cm, align=center,
                  font=\tiny\sffamily, fill=gray!5},
    label/.style={font=\tiny\itshape},
]

\node[block] (upper) {Upper-level\\$\min_y \Psi(y)$};
\node[block, below=0.6cm of upper] (lower) {Lower-level\\$h(y,D) \in \arg\min_x F(x,y,D)$};
\draw[arrow] (upper) -- node[right, label, font=\tiny] {requires solve} (lower);

\node[bottleneck, right=1.2cm of upper, yshift=-0.3cm] (bottleneck) {Bottleneck\\expensive LLP};

\draw[arrow, bend left=15] (upper.east) -- (bottleneck.west);

\node[solution, right=2.0cm of bottleneck] (mlmc) {MLMC zeroth-order estimator};

\draw[arrow, bend left=15] (bottleneck.east) -- node[above, label, font=\tiny] {} (mlmc.west);

\node[below=0.25cm of mlmc, font=\tiny\sffamily] (hlabel) {Hierarchy of inexact lower-level solves};

\node[level, below left=0.25cm and -0.75cm of hlabel] (l0) {L0};
\node[level, right=0.05cm of l0] (l1) {L1};
\node[level, right=0.05cm of l1] (l2) {$\cdots$};
\node[level, right=0.05cm of l2] (lL) {$L$};

\draw[arrow, draw=blue!60!black, shorten <=1pt] (hlabel.south) -- (l0.north);
\draw[arrow, draw=blue!60!black, shorten <=1pt] (hlabel.south) -- (l1.north);
\draw[arrow, draw=blue!60!black, shorten <=1pt] (hlabel.south) -- (lL.north);

\node[below=0.3cm of l2, font=\tiny\sffamily, align=center, text width=5.5cm] (decomp) {
Telescoping identity\\ $\nabla\Psi_{\eta,L} = \nabla\Psi_{\eta,0} + \sum_{\ell=0}^{L-1}(\nabla\Psi_{\eta,\ell+1} - \nabla\Psi_{\eta,\ell})$
};

\node[below=0.3cm of decomp, align=center, font=\tiny\bfseries\sffamily, text width=6cm, fill=yellow!15, rounded corners=1pt, inner sep=2pt] (summary) {
Key MLMC quantity: level-difference decay induced by the lower-level hierarchy

};

\begin{scope}[on background layer]
    \node[draw=gray!30, rounded corners=3pt, fit=(upper)(lower)(bottleneck), 
          inner sep=0.3cm, label={[font=\tiny\sffamily]above:Problem}] (prob-box) {};
    \node[draw=gray!30, rounded corners=3pt, fit=(mlmc)(hlabel)(l0)(lL)(decomp), 
          inner sep=0.3cm, label={[font=\tiny\sffamily]above:Solution}] (sol-box) {};
\end{scope}

\draw[arrow, dashed, draw=gray!50, shorten >=2pt] (prob-box.east) -- (sol-box.west);

\node[above=0.05cm of prob-box, font=\tiny\bfseries\sffamily, align=center] {
Multilevel Monte Carlo bilevel optimization
};

\end{tikzpicture}
} 
\caption{Problem structure and multilevel solution. Upper-level evaluations require lower-level solves (bottleneck). MLMC replaces single-level Monte Carlo with a hierarchy of lower-level accuracies. Effectiveness depends on level-difference decay of the lower-level hierarchy.}
\label{fig:mlmc-overview}
\end{figure}

\paragraph{Main results} 
Adopting the DFO optimization framework, we provide the following answers to our research question:
\begin{enumerate}
    \item We formulate a multilevel derivative-free bilevel optimization method
    for stochastic upper-level objectives whose integrands require approximate
    lower-level solves. Our method combines Gaussian smoothing with a multilevel
    Monte Carlo hierarchy over lower-level solution accuracies; see
    Algorithm~\ref{alg:ml-dfo} and the estimator definitions
    \eqref{eq:sl-gradient-estimator} and \eqref{eq:ml-gradient-estimator}. This induces a
    biased zeroth-order stochastic gradient oracle for the smoothed upper-level
    objective. The mean-square error bounds in Lemma~\ref{lem:gradient-mse}
    explicitly separate Monte Carlo sampling error and 
\begin{align*}
&\text{the lower-level bias } b_{\ell}=\sup_{y\in\scrY}\mathbb E\big[\textcolor{black}{\Lip_0(g,X)^2}\|\widehat h_\ell(y,D)-h(y,D)\|_{\mathcal X}^2\big],\\
&\text{ the level-difference error } v_{\ell+1}=\sup_{y\in\scrY}\mathbb E\big[\textcolor{black}{\Lip_0(g,X)^2}\|\widehat h_{\ell+1}(y,D)-\widehat h_\ell(y,D)\|_{\mathcal X}^2\big], 
\end{align*}
and the finite-difference/smoothing constants, where \textcolor{black}{$\Lip_0(g,x)$ denotes the Lipschitz constant of $h\mapsto g(h,x)$ given $x\in\mathcal X$}.
    \item We prove convergence of the resulting derivative-free method under a
    controlled mean-square error of the estimator. Theorem~\ref{thm:controlled-mse-convergence}
    gives a descent bound for the smoothed objective, while
    Corollary~\ref{cor:smoothed-stationarity} yields the corresponding iteration
    complexity for finding a point with small
    \(\mathbb E\|\nabla\Psi_\eta\|^2\). We also relate smoothed stationarity to
    stationarity of the original objective in the smooth case
    (Corollary~\ref{cor:original-stationarity}) and to Goldstein stationarity in
    the nonsmooth Lipschitz case (Proposition~\ref{prop:goldstein-stationarity}).

    \item We develop a solver-independent MLMC complexity theory based on three
    lower-level quantities: the bias \(b_\ell\), the coupled level-difference
    errors \(v_{\ell+1}\), and the cost \(c_\ell\). The generic estimator
    complexity result, Theorem~\ref{thm:generic-estimator-complexity}, gives the
    single-level and multilevel costs for any prescribed hierarchy. The resulting
    rates are then specialized to linearly convergent lower-level solvers in
    Theorem~\ref{thm:linear-ll-complexity} and to sublinearly convergent solvers
    in Theorem~\ref{thm:polynomial-ll-complexity}. We verify the abstract lower-level assumptions for concrete deterministic
    and stochastic solvers. For linearly convergent deterministic solvers, gradient descent and FISTA satisfy the required bias, level-difference, and
    cost conditions; see Proposition~\ref{prop:gd-verification} and
    Proposition~\ref{prop:fista-verification}. For sublinearly convergent stochastic solvers, the consecutive hierarchy for final-iterate SGD yields no improvement in the asymptotic multilevel complexity exponent (Theorem~\ref{thm:fi-sgd-complexity}), whereas Polyak--Ruppert averaged SGD improves the level-difference decay and yields a multilevel speedup
    (Theorem~\ref{thm:pr-sgd-complexity}). Moreover, we show that the absence of the required exponent improvement for final-iterate SGD is sharp within the consecutive-level MLMC allocation model, see Proposition~\ref{prop:fi-sgd-sharpness}. Finally, we combine the estimator complexity with the outer convergence theory to obtain total work bounds for finding a smoothed stationary point, see Corollary~\ref{cor:solver-stationarity-complexity} and  Table~\ref{tab:stationarity-complexity}.  A simplified version is provided in Table \ref{tab:intro-summary}.
    \item We extend the analysis to proximal and constrained outer-level variants. The localized estimator bounds in Lemma~\ref{lem:localized-mse-explicit-tail} quantify the Gaussian tail price of using smoothing with unbounded support,
    while Theorem~\ref{thm:prox-controlled-mse} and Corollary~\ref{cor:projected-stationarity-explicit-tail} give stationarity guarantees for composite and projected multilevel derivative-free methods.
    \item We illustrate the theory on numerical examples from imaging inverse problems. The experiments demonstrate the practical cost reductions predicted by the theory and include empirical illustration of complexity rates; see Section~\ref{sec:numerics}, in particular Section~\ref{subsubsec:numerical-complexity-rates}.
\end{enumerate}

\begin{table}[h!]
\small
\centering
\begin{tabular}{lccc}
\toprule
Lower-level solver
& Bias
& Coupled level-difference error
& MLMC improvement \\
\midrule
GD / FISTA
& Geometric
& Geometric
& Yes, logarithmic gain \\

Final-iterate SGD (consecutive)
& $\mathcal{O}(\ell^{-1})$
& $\mathcal{O}(\ell^{-2})$
& No exponent gain \\

Polyak--Ruppert SGD (consecutive)
& $\mathcal{O}(\ell^{-1})$
& $\mathcal{O}(\ell^{-3})$
& Yes, exponent gain \\
\bottomrule
\end{tabular}
\caption{Solver-dependent multilevel regimes. The effectiveness of MLMC is
determined not only by the lower-level approximation bias, but also by the
decay of the coupled level differences.}
\label{tab:intro-summary}
\end{table}

Table~\ref{tab:intro-summary} highlights the main solver-dependent phenomenon underlying our analysis. In particular, for the consecutive hierarchies considered below, final-iterate SGD and Polyak--Ruppert averaged SGD have the same lower-level bias rate, but different coupled level-difference rates, leading to qualitatively different multilevel complexity regimes. The resulting estimator and overall optimization complexities, as well as the associated level choices and multilevel sample allocations, are summarized in Table \ref{tab:stationarity-complexity} and \ref{table:comp}.
 

\subsection{Related work} 
Bilevel optimization in inverse problems has been widely used for data-driven regularization and parameter learning; see, for instance, \cite{AMOS19,HKB18,HERTRICH2026} and the
references therein. A prominent class of applications arises in imaging, where bilevel approaches have been developed for denoising, smoothing, deblurring, and
related variational reconstruction tasks \cite{KP13,CCDSV16,DSV17,LLS22}. Further developments connect bilevel learning with data-driven regularization
models and learned reconstruction architectures \cite{AMOS19,CST22,CW14}, as well as with the identification of regularization functions and parameters \cite{FFS18}. Many of these approaches rely on first-order information, either through derivatives of the forward operator, differentiability of the lower-level solution map, or adjoint-based hypergradient computations. In large-scale inverse problems such derivative information may be unavailable,
unstable, or expensive to compute. This has motivated derivative-free bilevel methods \cite{LR25,ER24,SMR25}. 

Our work is also related to multilevel methods in deterministic and stochastic optimization with biased or inexact oracles. A general multilevel principle in optimization is to replace evaluations of an objective
\(\Psi\), or of its gradient, by evaluations of approximations \(\Psi_\ell\), where the level \(\ell\in\mathbb N\) controls both the accuracy and the computational cost. Higher levels are more accurate but more expensive, and the algorithmic question is how to balance optimization error, discretization or solver error, and computational work. Multilevel optimization
algorithms of this type have been studied, for example, for optimal control problems \cite{MR4284423,MR4294188}. In \cite{WWZ2021}, optimal strategies for increasing the level accuracy along the optimization process were derived and applied to optimization and sampling in inverse problems. Related ideas also appear in PDE-constrained optimization under uncertainty, where MLMC estimators are used to reduce the cost of objective and gradient evaluations; see, for
instance, \cite{doi:10.1137/16M109870X} for an error analysis of minimizers in a strongly convex setting. The works \cite{MNT2020,VBV2019} apply MLMC gradient
estimators within iterative stochastic gradient methods, thereby reducing the cost of gradient estimation when a hierarchy of increasingly accurate models is available. More broadly, MLMC and randomized multilevel ideas have been used in stochastic optimization through sample average approximation
\cite{blanchet2015unbiased}, Robbins--Monro and Polyak--Ruppert stochastic approximation \cite{dereich2019general}, stochastic compositional optimization \cite{blanchet2017unbiased}, distributionally robust optimization \cite{levy2020large}, and proximal or Moreau-envelope based stochastic methods \cite{asi2021stochastic}. The closest works to ours are recent MLMC methods for stochastic optimization with biased first-order oracles. The general framework of \cite{HuHe21}
analyzes several MLMC gradient estimators for stochastic optimization with biased oracles and characterizes their complexity in terms of abstract bias, variance, and cost conditions. More recently, \cite{NEURIPS2023_f77d9409} introduced an MLMC estimator within a double-loop stochastic gradient method for contextual stochastic bilevel optimization, where the lower-level problem depends on contextual information and the MLMC construction reduces the cost of biased hypergradient estimation.


 \subsection{Organization of the paper}
Section~\ref{sec:method} introduces the Gaussian smoothing framework, the multilevel gradient
estimator, and the resulting derivative-free bilevel algorithm. It also gives the auxiliary mean-square error bounds used throughout the paper. Section~\ref{sec:convergence} proves convergence under abstract estimator error bounds and derives stationarity complexity results. Section~\ref{sec:solver-complexity} verifies the abstract assumptions for linearly convergent deterministic solvers and sublinearly convergent stochastic solvers, including gradient descent, FISTA, final-iterate SGD, and Polyak--Ruppert averaged SGD. Section~\ref{sec:prox-extensions} describes the extension of our analysis to proximal and constrained variants. Section~\ref{sec:numerics} presents numerical experiments. Omitted
proofs are collected in the appendix.

\section{Multilevel derivative-free gradient estimation}
\label{sec:method}

\subsection{Standing assumptions and notation}
\label{subsec:setup}
Throughout the paper, \(Z=(X,D)\sim \mathbb P_Z\) denotes a training pair, where \(X\in\mathcal X\) is the ground truth and \(D\in\mathcal D\) is the observed data. $\scrX$ is a real Hilbert space with norm $\norm{\cdot}_{\scrX}$, $\scrD$ is a real Hilbert space of observations, and $\scrY\subseteq\R^{d}$ is a set of admissible hyperparameters. In Sections~\ref{sec:method}--\ref{sec:solver-complexity} we work with $\scrY=\R^d$, constraints are treated in Section~\ref{sec:prox-extensions}. For \((y,d)\in\mathcal Y\times\scrD\), the exact lower-level reconstruction is
denoted by \(h(y,d)\). We assume that a hierarchy of approximate lower-level solution maps \(\widehat h_\ell:\mathcal Y\times\mathcal D\to\mathcal X\), \(\ell\ge0\) is available, where larger \(\ell\) reflects a more accurate and therefore more expensive lower-level solve. When the lower-level solver is randomized, $\widehat h_\ell$ is understood as a random mapping, with the auxiliary solver randomness suppressed from the notation. All expectations involving $\widehat h_\ell$ are then also taken with respect to this randomness. For \(Z=(X,D)\), define the corresponding sample losses
\[
    G_\ell(y;Z)
    :=
    g(\widehat h_\ell(y,D),X),\text{ and } G_\infty(y;Z) := g(h(y,D),X)\,.
\]
The level-\(\ell\) and exact upper-level objectives are
\[
    \Psi_\ell(y)
    :=
    \mathbb E[G_\ell(y;Z)],\quad \Psi(y)
    \equiv
    \Psi_\infty(y)
    :=
    \mathbb E[G_\infty(y;Z)]\,.
\]
We impose the following standing assumptions.
\begin{ass}[Measurability and integrability]
\label{ass:meas-integrability}
For every \(y\in\mathcal Y\) and every \(\ell\ge0\), the maps
\(\scrD\ni d\mapsto h(y,d)\), \(\scrD\ni d\mapsto \widehat h_\ell(y,d)\) are measurable, \(\mathbb E\left[|G_\infty(y;Z)|^2\right]<\infty\) and \(\mathbb E\left[|G_\ell(y;Z)|^2\right]<\infty\).
\end{ass}

\begin{ass}[Lipschitz loss]
\label{ass:lipschitz-loss}
There exists a measurable function \(\Lip_{0}(g,\cdot):\mathcal X\to[0,\infty)\) such that
\[|g(h_{1},x)-g(h_{2},x)|\le \Lip_0(g,x)\|h_1-h_2\|_{\mathcal X}\]
and \textcolor{black}{\(\mathbb E[\Lip_0(g,X)^2]=\int_{\Omega}\Lip_{0}(g,X(\omega))^{2}d\mathbb{P}(\omega)<\infty\)} for all \(h_1,h_2,x\in\mathcal X\). If \(\Lip_{0}(g,\cdot)\) is uniformly bounded, we write \(\Lip_{0}(g,x)\le \overline{\Lip_g}\).\footnote{Given a function $g$, we define $\Lip_{0}(g)$ as the Lipschitz modulus.}
\end{ass}

\begin{rem}
\label{rem:squared-loss}
The squared loss \(g(h,x)=\|h-x\|_{\mathcal X}^2\) is not globally
Lipschitz on \(\mathcal X\). It satisfies Assumption~\ref{ass:lipschitz-loss}
on bounded subsets of \(\mathcal X\). In particular, if the reconstructions and
ground truths are almost surely contained in a bounded subset of
\(\mathcal X\), then the squared loss is covered by the assumption.
\end{rem}
\begin{ass}[Lipschitz upper-level objectives]
\label{ass:lipschitz-upper}
There exist constants $\Lip_0(\Psi)$, and $(\Lip_0(\Psi_\ell))_{\ell\in\mathbb{N}}$, where \(\Lip_0(\Psi_\ell)\) are uniformly bounded in \(\ell\),
i.e., \(\sup_{\ell\ge0}\Lip_0(\Psi_\ell)<\infty\), such that
\[
    |\Psi(y_1)-\Psi(y_2)|
    \le
    \Lip_0(\Psi)\|y_1-y_2\| \quad \text{and}\quad |\Psi_\ell(y_1)-\Psi_\ell(y_2)|
    \le
    \Lip_0(\Psi_\ell)\|y_1-y_2\|\,,
\]
for all \(y_1,y_2\in\mathcal Y\) and all \(\ell\ge0\).
\end{ass}
General conditions for the Lipschitz continuity of the implicitly defined function $\Psi_{\ell}$ have been worked out in the literature at various places. A transparent, and not overly demanding, assumption ensuring this property is the combination of Assumption \ref{ass:lipschitz-loss}, together with a uniform Lipschitz assumption on the reconstruction operators $y\mapsto \hat{h}_{\ell}(y,D)$ and $y\mapsto h(y,D)$, respectively. General conditions for ensuring the Lipschitz continuity of the reconstruction operator can be found in \cite{DonRoc}. In the context of inverse problems, a practically relevant set of sufficient conditions for local Lipschitz continuity is given in \cite{LR25}. In particular, under strong convexity in the lower-level variable and suitable joint regularity with respect to the lower- and upper-level variables, the solution map is locally Lipschitz. Alternatively, if the lower-level objective is uniformly strongly convex in the lower-level variable and the mixed derivatives with respect to the lower- and upper-level variables are uniformly bounded, then the solution map is globally Lipschitz. Combined with the compact physical parameterization discussed in Appendix~\ref{sec:parameter-transformation}, such local estimates yield the uniform Lipschitz bounds required below, provided the corresponding Lipschitz moduli are integrable. 
Let \(U\sim\mathcal N(0,I_d)\) be independent of the training pair \(Z=(X,D)\). For a locally Lipschitz function \(\varphi:\mathcal Y\to\mathbb R\), define its \emph{Gaussian smoothing} by
\[ \varphi_\eta(y) := \mathbb E_U[\varphi(y+\eta U)]=\int_{\mathbb R^d}
        \varphi(y+\eta u)\pi(u)\,{\mathrm d}u, \quad \eta>0\,.
\]
where \(\pi\) denotes the standard Gaussian density on \(\mathbb R^d\). 
Under Lipschitz continuity, the smoothed objective has the following useful properties.
\begin{lem}[Properties of Gaussian smoothing \cite{nesterov2017random}]
\label{lem:gaussian-smoothing}
Let \(\varphi:\mathbb R^d\to\mathbb R\) be Lipschitz continuous with Lipschitz
constant \(\Lip_0(\varphi)\). For \(\eta>0\), define
\(\varphi_\eta(y):=\mathbb E[\varphi(y+\eta U)],\ U\sim\mathcal N(0,I_d),\ y\in\R^d\). Then $y\mapsto \varphi_\eta(y)$ satisfies the following properties:
\begin{enumerate}
    \item Lipschitz continuity: \(|\varphi_\eta(y_1)-\varphi_\eta(y_2)|\le \Lip_{0}(\varphi)\|y_1-y_2\|\) for all $y_1,y_2\in\R^d$;
    \item Smoothing error: \(\sup_{y\in\R^d}\ |\varphi_\eta(y)-\varphi(y)|\le\eta \Lip_{0}(\varphi) \mathbb E\|U\|\le\eta \Lip_0(\varphi)\sqrt d\);
    \item Differentiability: \(\nabla \varphi_\eta(y)=\mathbb E\Big[\frac{\varphi(y+\eta U)-\varphi(y)}{\eta}U\Big]\) for all $y\in\R^d$;
    \item Lipschitz smoothness: \(\|\nabla\varphi_\eta(y_1)-\nabla\varphi_\eta(y_2)\|\le\frac{\sqrt d}{\eta}\Lip_{0}(\varphi)\|y_1-y_2\|\) for all $y_1,y_2\in\R^d$. In particular, $\Lip_{1}(\varphi_{\eta})\leq \frac{\sqrt{d}\Lip_{0}(\varphi)}{\eta}$.
\end{enumerate}
\end{lem}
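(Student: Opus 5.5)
We prove the four items in turn. Each follows from the definition \(\varphi_\eta(y)=\mathbb E[\varphi(y+\eta U)]\) together with the explicit form of the Gaussian density \(\pi(u)=(2\pi)^{-d/2}e^{-\|u\|^2/2}\).

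\emph{Items 1 and 2.} For item 1, the Lipschitz bound passes through the expectation:
\[
|\varphi_\eta(y_1)-\varphi_\eta(y_2)|\le\mathbb E|\varphi(y_1+\eta U)-\varphi(y_2+\eta U)|\le\Lip_0(\varphi)\|y_1-y_2\|.
\]
For item 2, we write \(\varphi_\eta(y)-\varphi(y)=\mathbb E[\varphi(y+\eta U)-\varphi(y)]\) and bound the integrand by \(\eta\Lip_0(\varphi)\|U\|\). Jensen's inequality then gives \(\mathbb E\|U\|\le(\mathbb E\|U\|^2)^{1/2}=\sqrt d\). All expectations are finite, since \(\varphi\) grows at most linearly and \(U\) has all moments.

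\emph{Item 3.} We change variables to \(v=y+\eta u\), so that
\[
\varphi_\eta(y)=\eta^{-d}\int\varphi(v)\,\pi\!\big((v-y)/\eta\big)\,{\mathrm d}v .
\]
We differentiate under the integral in \(y\). This is justified by dominated convergence: locally uniformly in \(y\), the derivative of the kernel is dominated by an integrable Gaussian-times-polynomial envelope, and \(|\varphi(v)|\le|\varphi(0)|+\Lip_0(\varphi)\|v\|\). Using \(\nabla_y\pi((v-y)/\eta)=\eta^{-1}\frac{v-y}{\eta}\,\pi((v-y)/\eta)\) and changing variables back yields
\[
\nabla\varphi_\eta(y)=\eta^{-1}\mathbb E[\varphi(y+\eta U)U].
\]
Since \(\mathbb E[U]=0\), we may subtract \(\varphi(y)\,\mathbb E[U]/\eta\) to obtain the stated finite-difference form.

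\emph{Item 4.} This is the main obstacle, because a naive bound loses constants. From item 3,
\[
\nabla\varphi_\eta(y_1)-\nabla\varphi_\eta(y_2)=\eta^{-1}\,\mathbb E\big[(\varphi(y_1+\eta U)-\varphi(y_2+\eta U))\,U\big].
\]
Bounding the integrand pointwise gives \(\eta^{-1}\Lip_0(\varphi)\|y_1-y_2\|\,\mathbb E\|U\|\le\frac{\sqrt d}{\eta}\Lip_0(\varphi)\|y_1-y_2\|\). This is already the claimed constant, since \(\mathbb E\|U\|\le\sqrt d\). The one point to be careful about is that we are bounding the norm of a vector expectation by the expectation of the norm, \(\|\mathbb E[\xi U]\|\le\mathbb E[|\xi|\,\|U\|]\), which is legitimate.

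The bound on \(\Lip_1(\varphi_\eta)\) is then just a restatement of this inequality. This matches \cite{nesterov2017random}, where the sharper intermediate constant \(\mathbb E\|U\|\) appears before being relaxed to \(\sqrt d\).
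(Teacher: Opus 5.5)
Your proof is correct and follows essentially the same route as the Nesterov--Spokoiny argument, which the paper cites instead of giving its own proof. You differentiate the Gaussian kernel to get $\nabla\varphi_\eta(y)=\eta^{-1}\mathbb E[\varphi(y+\eta U)U]$, subtract $\varphi(y)\mathbb E[U]/\eta=0$, and for item 4 bound $|\varphi(y_1+\eta U)-\varphi(y_2+\eta U)|\le\Lip_0(\varphi)\|y_1-y_2\|$ inside the expectation using $\mathbb E\|U\|\le\sqrt d$.
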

Applying Lemma~\ref{lem:gaussian-smoothing} to \(\Psi\) yields \(\Psi_\eta\in C^{1,1}\) and
\begin{equation}
    \nabla\Psi_\eta(y)=\mathbb E
    \Big[\frac{G_\infty(y+\eta U;Z)-G_\infty(y;Z)}{\eta}U
    \Big],
    \label{eq:exact-smoothed-gradient}
\end{equation}
where the expectation is over the independent pair \((U,Z)\). Similarly, for every level \(\ell\ge0\), define
\(\Psi_{\eta,\ell}(y):=\mathbb E_U[\Psi_\ell(y+\eta U)]\) with
\begin{equation}
    \nabla\Psi_{\eta,\ell}(y)
    =
    \mathbb E
    \Big[
        \frac{
            G_\ell(y+\eta U;Z)-G_\ell(y;Z)
        }{\eta}U
    \Big].
    \label{eq:level-smoothed-gradient}
\end{equation}
The resulting gradients are Lipschitz continuous with constants $\Lip_{1}(\Psi_\eta)\le\frac{\sqrt d}{\eta}\Lip_{0}(\Psi)$ and \\ \(\Lip_{1}(\Psi_{\eta,\ell})\le\frac{\sqrt d}{\eta}\Lip_{0}(\Psi_{\ell})\). 

\subsection{Monte Carlo gradient estimator}
\paragraph{Single-level estimator.} We first define the standard single-level derivative-free gradient estimator. Fix a lower-level accuracy \(\ell\ge0\) and a sample size \(N\in\mathbb N\). Let \((U_i,Z_i)_{i=1}^N\) be independent copies of the pair \((U,Z)\), where
\(U_i\sim\mathcal N(0,I_d)\) and \(Z_i=(X_i,D_i)\sim\mathbb P_Z\), with \(U_i\) independent of \(Z_i\). The single-level estimator at level \(\ell\) is defined by
\begin{equation}\label{eq:sl-gradient-estimator}
\GhatSL_{\eta,\ell,N}(y):=\frac1N\sum_{i=1}^N\frac{G_\ell(y+\eta U_i;Z_i)-G_\ell(y;Z_i)}{\eta}U_i .
\end{equation}
The estimator \(\GhatSL_{\eta,\ell,N}(y)\) is an unbiased estimator of the gradient of the level-\(\ell\) smoothed objective:
\begin{equation}\label{eq:sl-unbiased-level-gradient}
\mathbb E[\GhatSL_{\eta,\ell,N}(y)]=\nabla\Psi_{\eta,\ell}(y)\,.
\end{equation}
It is generally biased as an estimator of the exact smoothed gradient \(\nabla\Psi_\eta(y)\), because \(\widehat h_\ell\) replaces the exact lower-level solution \(h\). Hence, the single-level estimator error decomposes into a sampling error and a lower-level approximation bias:
\begin{equation}\label{eq:sl-error-decomposition}
\GhatSL_{\eta,\ell,N}(y)-\nabla\Psi_\eta(y)=\Big(\GhatSL_{\eta,\ell,N}(y)-\nabla\Psi_{\eta,\ell}(y)\Big)+\Big(\nabla\Psi_{\eta,\ell}(y)-\nabla\Psi_\eta(y)\Big).
\end{equation}
This decomposition will be quantified in
Lemma~\ref{lem:gradient-mse}.


\paragraph{Multilevel estimator.} The single-level estimator uses one fixed lower-level accuracy \(\ell\). In contrast, the multilevel estimator combines a hierarchy of lower-level approximations $\widehat h_0,\widehat h_1,\ldots,\widehat h_L$. The construction is based on the telescoping identity
\[
\nabla\Psi_{\eta,L}(y)=\nabla\Psi_{\eta,0}(y)+\sum_{\ell=0}^{L-1}\Big(\nabla\Psi_{\eta,\ell+1}(y)-\nabla\Psi_{\eta,\ell}(y)\Big).
\]
Instead of estimating \(\nabla\Psi_{\eta,L}(y)\) directly at the finest level, we estimate the level-zero term and each correction separately. Let $\mathbf N=(N_0,\ldots,N_L)$ be a vector of level-dependent sample sizes. For each level
\(\ell=0,\ldots,L\), let \((U_i^{(\ell)},Z_i^{(\ell)})_{i=1}^{N_\ell}\)
be independent copies of \((U,Z)\), and assume that samples are independent across levels. The level-zero estimator is
\begin{equation}\label{eq:ml-level-zero-estimator}
\widehat{\mathcal G}^{(0)}_{\eta,N_0}(y):=\frac1{N_0}\sum_{i=1}^{N_0}\frac{G_0(y+\eta U_i^{(0)};Z_i^{(0)})-G_0(y;Z_i^{(0)})}{\eta}U_i^{(0)}\,.
\end{equation}
For \(\ell=0,\ldots,L-1\), the level \((\ell+1)\) correction is
\begin{equation}\label{eq:ml-correction-estimator}
\begin{aligned}
    \Delta\widehat{\mathcal G}^{(\ell+1)}_{\eta,N_{\ell+1}}(y)
    :=
    \frac1{N_{\ell+1}}
    \sum_{i=1}^{N_{\ell+1}}
    \Bigg[
    &
    \frac{
        G_{\ell+1}(y+\eta U_i^{(\ell+1)};Z_i^{(\ell+1)})
        -
        G_\ell(y+\eta U_i^{(\ell+1)};Z_i^{(\ell+1)})
    }{\eta}
    \\
    &-
    \frac{
        G_{\ell+1}(y;Z_i^{(\ell+1)})
        -
        G_\ell(y;Z_i^{(\ell+1)})
    }{\eta}
    \Bigg]
    U_i^{(\ell+1)} .
\end{aligned}
\end{equation}
The multilevel derivative-free gradient estimator is then
\begin{equation}\label{eq:ml-gradient-estimator}
    \GhatML_{\eta,L,\mathbf N}(y)
    :=
    \widehat{\mathcal G}^{(0)}_{\eta,N_0}(y)
    +
    \sum_{\ell=0}^{L-1}
    \Delta\widehat{\mathcal G}^{(\ell+1)}_{\eta,N_{\ell+1}}(y).
\end{equation}
Note that the same sample \((U_i^{(\ell+1)},Z_i^{(\ell+1)})\) is used for both \(G_{\ell+1}\) and \(G_\ell\) in each correction term. This coupling is essential for the multilevel variance reduction: under the Lipschitz condition on \(g\), the mean squared error of the correction is controlled by the level-difference
error \(\mathbb E\big[\textcolor{black}{\Lip_0(g,X)^2}\|\widehat h_{\ell+1}(y,D)-\widehat h_\ell(y,D)\|_{\mathcal X}^2\big]\). The estimator is unbiased for the finest-level smoothed gradient,
\[\mathbb E[\GhatML_{\eta,L,\mathbf N}(y)]=\nabla\Psi_{\eta,L}(y)\,,\]
and its bias relative to \(\nabla\Psi_\eta(y)\) is therefore determined by the finest-level approximation error. When the lower-level solver is stochastic, all expectations involving $\widehat h_\ell$ also include the auxiliary solver randomness. Within each multilevel correction, the level $\ell$ and level $\ell+1$ approximations are generated from the same underlying stochastic trajectory, while different Monte Carlo samples and different level estimators use independent copies.

\subsection{The ML-DFO algorithm}
\label{subsec:algorithm}

Having defined the construction of the gradient estimator, we can now present our multilevel derivative-free bilevel optimization method. Given a current upper-level iterate \(Y_k\), the method forms a search direction $\hat{V}_{k+1}$ and then performs an updating step $Y_{k+1} = Y_k-\gamma \widehat{V}_{k+1}$ with a well-calibrated step size $\gamma$. 
The search direction is either the result of a single-level Monte-Carlo estimator (Method '\texttt{SLM}'), or a multilevel estimator (Method '\texttt{MLM}'). 
\begin{algorithm}[htb!]
\caption{Multilevel derivative-free bilevel optimization}
\label{alg:ml-dfo}
\begin{algorithmic}[1]
\STATE \textbf{Input:} Initial point \(Y_0\in\mathcal Y\), smoothing parameter \(\eta>0\),
stepsize \(\gamma>0\), number of outer iterations \(K\), maximum level \(L\),
sample sizes \(\mathbf N=(N_0,\ldots,N_L)\), $\text{Method}\in \{\texttt{SLM},\texttt{MLM}\}$.
\FOR{\(k=0,\ldots,K-1\)}
    \STATE Set 
    \begin{equation}\label{eq:V}
\hat{V}_{k+1}=\begin{cases} \GhatSL_{\eta,L,N_L}(Y_{k}) & \text{if Method $=$\texttt{SLM}}, \\
\GhatML_{\eta,L,\mathbf N}(Y_{k}) & \text{if Method $=$\texttt{MLM}}.\end{cases}
\end{equation}
    \STATE Update $Y_{k+1} = 
    Y_k-\gamma \widehat{V}_{k+1}
    $.
\ENDFOR
\RETURN final iterate \(Y_K\) or random iterate \(Y_{\kappa}\) with independent
\(\kappa \sim{\rm Unif}\{0,\ldots,K-1\}\).
\end{algorithmic}
\end{algorithm}
The corresponding single-level method is obtained by replacing
\(\GhatML_{\eta,L,\mathbf N}(Y_k)\) in Algorithm~\ref{alg:ml-dfo} with the single-level estimator \(\GhatSL_{\eta,L,N}(Y_k)\) at a fixed finest level \(L\). This will be used as the baseline in the complexity comparison. 
The estimator definitions allow arbitrary level-dependent sample sizes \(\mathbf N=(N_0,\ldots,N_L)\). The choices used to achieve a prescribed mean-square accuracy depend on the lower-level bias, level-difference decay, and per-level cost. These choices are derived abstractly in Section~\ref{sec:solver-complexity} and then specialized to concrete lower-level solvers.

\subsection{Zeroth-order biased gradient oracle}
\label{subsec:mse-bounds}
We now record the estimator bounds used in the convergence and complexity analysis. In this subsection, and in the convergence and complexity analysis of Sections~\ref{sec:convergence}--\ref{sec:solver-complexity}, we first consider the unconstrained hyperparameter space \(y\in\mathcal Y:= \mathbb R^d\). Accordingly, the
lower-level bias and level-difference errors below are stated uniformly over \(\mathbb R^d\). Projected and proximal outer variants, which allow constraints on \(y\) and non-smooth convex outer-level terms, are discussed in Section~\ref{sec:prox-extensions}.
The key measures which need to be quantified in order to assess the complexity of an instantiation of Algorithm \ref{alg:ml-dfo} are specified in the definition below.
\begin{definition}[Bias terms]
Given a sequence $(\widehat h_{\ell})_{\ell}$, we define the lower-level bias
\begin{equation}\label{eq:lower-level-bias}
    b_\ell
    :=
    \sup_{y\in\mathbb R^d}
    \mathbb E
    [
        \textcolor{black}{\Lip_0(g,X)^2}\|\widehat h_\ell(y,D)-h(y,D)\|_{\mathcal X}^2
    ],
    \qquad \ell\ge0,
\end{equation}
and the level-difference error
\begin{equation}\label{eq:level-difference-error}
    v_{\ell+1}
    :=
    \sup_{y\in\mathbb R^d}
    \mathbb E
    [
        \textcolor{black}{\Lip_0(g,X)^2}\|\widehat h_{\ell+1}(y,D)-\widehat h_\ell(y,D)\|_{\mathcal X}^2
    ],
    \qquad \ell\ge0.
\end{equation}
\end{definition}
The sequence \(b_\ell\) controls the numerical bias introduced by solving the lower-level problem only approximately, while \(v_{\ell+1}\) controls the mean squared errors of the multilevel correction between levels \(\ell\) and \(\ell+1\). 
The following assumption collects the moment bounds needed to control the finite-difference estimators.
\begin{ass}[Finite-difference moment bounds]
\label{ass:fd-moment-bounds}
There exists a constant \(C_{\rm fd}:=C_{\rm fd}(d,\eta)>0\), independent of \(\ell\ge0\), and \(y\in\mathcal Y\) but possibly depending on the dimension $d$ and $\eta$, such that
\[
    \mathbb E
    \bigg[
        \Big\|
            \frac{
                G_\infty(y+\eta U;Z)-G_\infty(y;Z)
            }{\eta}
            U
        \Big\|^2
    \bigg]
    \le C_{\rm fd}
\]
and, for every \(\ell\ge0\),
\[
    \mathbb E
    \bigg[
        \Big\|
            \frac{
                G_\ell(y+\eta U;Z)-G_\ell(y;Z)
            }{\eta}
            U
        \Big\|^2
    \bigg]
    \le C_{\rm fd}
\]
\end{ass}

If $G_\infty$ and $G_\ell$ are both uniformly bounded, then $C_{\rm fd}\propto d/\eta^2$, and in case $G_\infty$ and $G_\ell$ are both globally Lipschitz continuous in $y$ then $C_{\rm fd}$ is independent of $\eta$ and satisfies $C_{\rm fd}\propto d^2$.
Assumption~\ref{ass:fd-moment-bounds} allows us to derive bounds on the MSE for both the single-level and multilevel gradient estimator. The proof is deferred to Appendix~\ref{app:method-proofs}.
\begin{lem}[Gradient estimator MSE]
\label{lem:gradient-mse}
Suppose that Assumptions~\ref{ass:meas-integrability},
\ref{ass:lipschitz-loss}, \ref{ass:lipschitz-upper} and~\ref{ass:fd-moment-bounds} hold. Then there
exists a constant \(C_{d,\eta}>0\), independent of \(N\) and \(\ell\), but depending on the dimension $d$ and $\eta$, such
that for every \(y\in\mathcal Y\),
\begin{equation}\label{eq:sl-gradient-mse}
    \mathbb E
    \bigg[
        \Big\|
            \GhatSL_{\eta,\ell,N}(y)-\nabla\Psi_\eta(y)
        \Big\|^2
    \bigg]
    \le
    C_{d,\eta}
    \left(
        \frac1N+b_\ell
    \right).
\end{equation}
Similarly, there exists a constant \(C_{d,\eta}>0\), independent of \(L\) and \(\mathbf N\) but depending on the dimension $d$ and $\eta$, such that for every \(y\in\mathcal Y\),
\begin{equation}\label{eq:ml-gradient-mse}
    \mathbb E
    \bigg[
        \Big\|
            \GhatML_{\eta,L,\mathbf N}(y)-\nabla\Psi_\eta(y)
        \Big\|^2
    \bigg]
    \le C_{d,\eta}
    \left(
        \frac1{N_0}
        +
        \sum_{\ell=0}^{L-1}
        \frac{v_{\ell+1}}{N_{\ell+1}}
        +
        b_L
    \right).
\end{equation}
\textcolor{black}{In both cases, it holds $C_{d,\eta}\propto d\eta^{-2} + C_{\rm fd}$.}
\end{lem}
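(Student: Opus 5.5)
The proof splits each estimator error into a mean-zero sampling part and a deterministic bias part, exactly as in \eqref{eq:sl-error-decomposition}. For the single-level estimator we use $\|a+b\|^2\le 2\|a\|^2+2\|b\|^2$. The sampling term $\GhatSL_{\eta,\ell,N}(y)-\nabla\Psi_{\eta,\ell}(y)$ is an average of $N$ i.i.d.\ centred vectors, by \eqref{eq:sl-unbiased-level-gradient}. Its second moment is therefore at most $\frac1N$ times the second moment of a single summand, and Assumption~\ref{ass:fd-moment-bounds} bounds that by $C_{\rm fd}$. This gives the $C_{\rm fd}/N$ contribution.

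For the bias term $\nabla\Psi_{\eta,\ell}(y)-\nabla\Psi_\eta(y)$ we subtract \eqref{eq:exact-smoothed-gradient} from \eqref{eq:level-smoothed-gradient}:
\[
\nabla\Psi_{\eta,\ell}(y)-\nabla\Psi_\eta(y)=\frac1\eta\mathbb E\Big[\big(\Delta_\ell(y+\eta U;Z)-\Delta_\ell(y;Z)\big)U\Big],
\qquad \Delta_\ell:=G_\ell-G_\infty .
\]
Jensen's inequality and Cauchy--Schwarz over $(U,Z)$ bound its squared norm by
\[
\frac{2}{\eta^2}\Big(\mathbb E\big[|\Delta_\ell(y+\eta U;Z)|^2\|U\|^2\big]+\mathbb E\big[|\Delta_\ell(y;Z)|^2\|U\|^2\big]\Big).
\]
By Assumption~\ref{ass:lipschitz-loss}, $|\Delta_\ell(y';Z)|\le \Lip_0(g,X)\|\widehat h_\ell(y',D)-h(y',D)\|$. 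The key point is the independence of $U$ and $Z$. For the second term this is immediate: it factorises as $\mathbb E\|U\|^2=d$ times $\mathbb E[\Lip_0(g,X)^2\|\widehat h_\ell(y,D)-h(y,D)\|^2]\le b_\ell$. For the shifted point, we condition on $U=u$ and use that $b_\ell$ is a supremum over all of $\mathbb R^d$. Hence the inner conditional expectation is at most $b_\ell$ uniformly in $u$, and integrating against $\|u\|^2$ again yields $d\,b_\ell$. Together this gives the bias bound $4d\,b_\ell/\eta^2$.

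The one delicate point is that $\widehat h_\ell$ may carry solver randomness. We therefore have to make sure this randomness is independent of $U$, so that the conditioning argument is legitimate and Fubini applies. This is where Assumption~\ref{ass:meas-integrability} enters. Summing the two pieces gives \eqref{eq:sl-gradient-mse} with $C_{d,\eta}\propto d\eta^{-2}+C_{\rm fd}$.

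For the multilevel estimator we proceed the same way. Its expectation is $\nabla\Psi_{\eta,L}(y)$, so
\[
\GhatML-\nabla\Psi_\eta=(\GhatML-\nabla\Psi_{\eta,L})+(\nabla\Psi_{\eta,L}-\nabla\Psi_\eta).
\]
The second piece is bounded exactly as above, giving $4d\,b_L/\eta^2$. The first piece is a sum of independent centred level estimators, because samples are independent across levels. Its second moment is therefore the sum of the individual variances, with no cross terms. The level-zero variance is at most $C_{\rm fd}/N_0$. Each correction variance is at most $\frac1{N_{\ell+1}}$ times the second moment of one coupled summand $\frac1\eta[\delta_\ell(y+\eta U;Z)-\delta_\ell(y;Z)]U$, where $\delta_\ell:=G_{\ell+1}-G_\ell$ is evaluated on the same sample.

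That coupled second moment is bounded by the same Lipschitz, independence and supremum argument. Now $|\delta_\ell|\le \Lip_0(g,X)\|\widehat h_{\ell+1}-\widehat h_\ell\|$, which gives $4d\,v_{\ell+1}/\eta^2$. Here the assumed common stochastic trajectory for levels $\ell$ and $\ell+1$ is exactly what makes $v_{\ell+1}$ the relevant quantity. Collecting the terms with the constant $\max(C_{\rm fd},4d/\eta^2)$ proves \eqref{eq:ml-gradient-mse}. I expect no real obstacle beyond carefully justifying the conditioning on $U$ for the shifted point $y+\eta U$, which is the only place the uniform-in-$y$ definition of $b_\ell$ and $v_{\ell+1}$ is essential.
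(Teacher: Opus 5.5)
Your proof is correct and follows essentially the same route as the paper's appendix proof: the same sampling/bias split, the $C_{\rm fd}/N$ variance bound from Assumption~\ref{ass:fd-moment-bounds}, the Lipschitz bound on the finite-difference numerator, and conditioning on $U$ together with the supremum over $y$ in $b_\ell$ and $v_{\ell+1}$, with independence across levels removing cross terms in the multilevel case. Your constant $4d/\eta^2$ is actually the accurate one; the paper writes $2d/\eta^2$, dropping the factor coming from $\|a+b\|^2\le 2\|a\|^2+2\|b\|^2$. One small correction: independence of the solver randomness from $U$ is an implicit part of the sampling setup, as in the paper, rather than something Assumption~\ref{ass:meas-integrability} provides.
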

Lemma~\ref{lem:gradient-mse} shows that the derivative-free bilevel setting induces a biased stochastic gradient oracle for the smoothed upper-level objective. We would like to point out that, in contrast to abstract biased-oracle models, the oracle error is not assumed a priori. Instead, it is generated by the interaction between Gaussian finite differences and the accuracy of the lower-level reconstruction solver. 

\section{Approximately solving \eqref{eq:BOP} under a controlled estimator error}
\label{sec:convergence}
Being agnostic about the effective method employed to construct the search direction $\widehat V_{k}$, we derive upper bounds on the rate of convergence for solving the implicit program \eqref{eq:BOP}. Throughout this section, the smoothing parameter \(\eta>0\) is fixed, and the solver-specific choices of \(L\), \(\ell\), as well as the sample sizes are treated as fixed parameters here. Recommendations on how to choose these algorithmic parameters, depending on the chosen lower level solver, are derived in Section~\ref{sec:solver-complexity}.
%
By Lemma~\ref{lem:gaussian-smoothing} and Assumption~\ref{ass:lipschitz-upper}, the smoothed objective \(\Psi_\eta\) is continuously differentiable and \(\Lip_1(\Psi_\eta)\)-smooth with $\Lip_1(\Psi_\eta)\le\frac{\sqrt d}{\eta}\Lip_0(\Psi)$. We consider the stochastic iteration
\begin{equation}\label{eq:outer-iteration}
    Y_{k+1}
    =
    Y_k-\gamma \widehat{V}_{k+1},
\end{equation}
where \(\widehat{V}_{k+1}\) is a possibly biased estimator of \(\nabla\Psi_\eta(Y_k)\). The only property needed for the descent analysis is a bound on the
mean-square estimator error $\mathbb E[\|e_{k+1}\|^2]$, where $e_{k+1}:=\hat{V}_{k+1}-\nabla\Psi_{\eta}(Y_{k})$. Such bounds were established in Lemma~\ref{lem:gradient-mse}; the appropriate
choice of levels and sample sizes will be derived in Section~\ref{sec:solver-complexity}.

\subsection{Descent analysis for the smoothed objective}
\label{subsec:descent-analysis}
The following theorem shows that in order to make the averaged squared norm of
\(\nabla\Psi_\eta\) small, one needs to take into account an objective function gap, and develop a control of the accumulated mean-square error of the stochastic gradient estimator.
\begin{theorem}[Convergence under controlled gradient-estimator error]
\label{thm:controlled-mse-convergence}
Assume that \(\Psi_\eta\) is bounded below by \(\Psi_\eta^\star>-\infty\). Let \((Y_k)_{k\ge0}\) be generated by \eqref{eq:outer-iteration} with a constant step-size
$0<\gamma\le \frac{1}{4\Lip_1(\Psi_\eta)}$. Then, for every \(K\ge1\),
\begin{equation} \label{eq:controlled-mse-convergence}
    \frac1K\sum_{k=0}^{K-1}
    \mathbb E
    \left[
        \|\nabla\Psi_\eta(Y_k)\|^2
    \right]
    \le
    \frac{2(\Psi_\eta(Y_0)-\Psi_\eta^\star)}{\gamma K}
    +
    \frac{4}{K}\sum_{k=0}^{K-1}
    \mathbb E
    \left[
        \|e_{k+1}\|^2
    \right].
\end{equation}
\end{theorem}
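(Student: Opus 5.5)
The plan is the standard descent-lemma argument for $L$-smooth functions with an inexact gradient. We never use unbiasedness of $e_{k+1}$, only its second moment.

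Write $L_1:=\Lip_1(\Psi_\eta)$ and $\widehat V_{k+1}=\nabla\Psi_\eta(Y_k)+e_{k+1}$. By Lemma~\ref{lem:gaussian-smoothing} and Assumption~\ref{ass:lipschitz-upper}, $\nabla\Psi_\eta$ is $L_1$-Lipschitz. The descent lemma then gives pathwise
\[
\Psi_\eta(Y_{k+1})\le \Psi_\eta(Y_k)-\gamma\langle \nabla\Psi_\eta(Y_k),\nabla\Psi_\eta(Y_k)+e_{k+1}\rangle+\frac{L_1\gamma^2}{2}\|\nabla\Psi_\eta(Y_k)+e_{k+1}\|^2 .
\]

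Next we bound the right-hand side term by term. Young's inequality gives $-\gamma\langle \nabla\Psi_\eta(Y_k),e_{k+1}\rangle\le \frac{\gamma}{4}\|\nabla\Psi_\eta(Y_k)\|^2+\gamma\|e_{k+1}\|^2$. We also use $\|a+b\|^2\le 2\|a\|^2+2\|b\|^2$ together with $L_1\gamma\le 1/4$, so that $\frac{L_1\gamma^2}{2}\|\nabla\Psi_\eta+e\|^2\le \frac{\gamma}{4}\|\nabla\Psi_\eta\|^2+\frac{\gamma}{4}\|e\|^2$. Collecting terms yields
\[
\Psi_\eta(Y_{k+1})\le \Psi_\eta(Y_k)-\frac{\gamma}{2}\|\nabla\Psi_\eta(Y_k)\|^2+\frac{5\gamma}{4}\|e_{k+1}\|^2 .
\]
This is in fact sharper than needed. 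Equivalently, we could split the inner product with weight $\tfrac12$ and obtain the constant $2\gamma$ in front of $\|e_{k+1}\|^2$ directly; after multiplying by $2/\gamma$ that becomes the constant $4$ in \eqref{eq:controlled-mse-convergence}. So there is slack, and the exact constants are not the obstacle.

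Finally we take expectations, which requires $\mathbb E\|e_{k+1}\|^2<\infty$; this is ensured by Lemma~\ref{lem:gradient-mse}, or otherwise the bound is trivial. We also need $\mathbb E|\Psi_\eta(Y_k)|<\infty$, which follows inductively from the Lipschitz continuity of $\Psi_\eta$ and square-integrability of the estimators. Summing over $k=0,\dots,K-1$ telescopes the function values. Using $\Psi_\eta(Y_K)\ge\Psi_\eta^\star$, multiplying by $2/(\gamma K)$, and bounding $\frac{5}{2}\le 4$ gives the claim.

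The only delicate point is this integrability bookkeeping when passing to expectations. Everything else is routine.
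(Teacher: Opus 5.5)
Your proof is correct and follows the paper's argument step for step. Both use the descent lemma, the same Young split $-\gamma\langle\nabla\Psi_\eta(Y_k),e_{k+1}\rangle\le\frac{\gamma}{4}\|\nabla\Psi_\eta(Y_k)\|^2+\gamma\|e_{k+1}\|^2$, the bound $\|a+b\|^2\le 2\|a\|^2+2\|b\|^2$ with $\Lip_1(\Psi_\eta)\gamma\le\frac14$, and then telescoping. The only difference is that the paper rounds $\gamma(1+\Lip_1(\Psi_\eta)\gamma)\le\frac54\gamma$ up to $2\gamma$ at each step, which is where its constant $4$ comes from, not from a different Young weight. You instead keep $\frac54$ and conclude with $\frac52\le 4$. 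Your integrability bookkeeping is extra care that the paper omits.
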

\begin{proof}

Since $\Psi_\eta$ is $\Lip_1(\Psi_\eta)$-smooth, for every $k$ we have
\[
\Psi_\eta(Y_{k+1}) \le \Psi_\eta(Y_k) + \langle \nabla \Psi_\eta(Y_k), Y_{k+1} - Y_k \rangle + \frac{\Lip_1(\Psi_\eta)}{2}\|Y_{k+1} - Y_k\|^2.
\]
Using the update rule $Y_{k+1} = Y_k - \gamma \widehat{V}_{k+1}$, this becomes
\[
\Psi_\eta(Y_{k+1}) \le \Psi_\eta(Y_k) - \gamma \langle \nabla \Psi_\eta(Y_k), \widehat{V}_{k+1} \rangle + \frac{\Lip_1(\Psi_\eta)\gamma^2}{2}\|\widehat{V}_{k+1}\|^2.
\]
Define the error $e_{k+1} := \widehat{V}_{k+1} - \nabla \Psi_\eta(Y_k)$. Then $\widehat{V}_{k+1} = \nabla \Psi_\eta(Y_k) + e_{k+1}$, and substituting this into the previous inequality yields
\[
\begin{aligned}
\Psi_\eta(Y_{k+1}) &\le \Psi_\eta(Y_k) - \gamma \langle \nabla \Psi_\eta(Y_k), \nabla \Psi_\eta(Y_k) + e_{k+1} \rangle + \frac{\Lip_1(\Psi_\eta)\gamma^2}{2}\|\nabla \Psi_\eta(Y_k) + e_{k+1}\|^2 \\
&= \Psi_\eta(Y_k) - \gamma \|\nabla \Psi_\eta(Y_k)\|^2 - \gamma \langle \nabla \Psi_\eta(Y_k), e_{k+1} \rangle + \frac{\Lip_1(\Psi_\eta)\gamma^2}{2}\|\nabla \Psi_\eta(Y_k) + e_{k+1}\|^2.
\end{aligned}
\]
Using the elementary inequality $\|a + b\|^2 \le 2\|a\|^2 + 2\|b\|^2$, we obtain
\[
\|\nabla \Psi_\eta(Y_k) + e_{k+1}\|^2 \le 2\|\nabla \Psi_\eta(Y_k)\|^2 + 2\|e_{k+1}\|^2.
\]
Therefore,
\[
\begin{aligned}
\Psi_\eta(Y_{k+1}) &\le \Psi_\eta(Y_k) - \gamma \|\nabla \Psi_\eta(Y_k)\|^2 - \gamma \langle \nabla \Psi_\eta(Y_k), e_{k+1} \rangle  + \Lip_1(\Psi_\eta)\gamma^2 \|\nabla \Psi_\eta(Y_k)\|^2 + \Lip_1(\Psi_\eta)\gamma^2 \|e_{k+1}\|^2.
\end{aligned}
\]
We now bound the inner product term. By Young's inequality, for any $\alpha > 0$,
\[
\langle a, b \rangle \le \frac{\alpha}{2}\|a\|^2 + \frac{1}{2\alpha}\|b\|^2.
\]
Taking $\alpha = \frac{1}{2}$ gives
\[
-\gamma \langle \nabla \Psi_\eta(Y_k), e_{k+1} \rangle \le \frac{\gamma}{4}\|\nabla \Psi_\eta(Y_k)\|^2 + \gamma \|e_{k+1}\|^2.
\]
Inserting this bound into the previous display yields
\[
\begin{aligned}
\Psi_\eta(Y_{k+1}) &\le \Psi_\eta(Y_k) - \gamma \|\nabla \Psi_\eta(Y_k)\|^2 + \frac{\gamma}{4}\|\nabla \Psi_\eta(Y_k)\|^2 + \gamma \|e_{k+1}\|^2 \\
&\quad + \Lip_1(\Psi_\eta)\gamma^2 \|\nabla \Psi_\eta(Y_k)\|^2 + \Lip_1(\Psi_\eta)\gamma^2 \|e_{k+1}\|^2.
\end{aligned}
\]
Grouping the terms involving $\|\nabla \Psi_\eta(Y_k)\|^2$ and $\|e_{k+1}\|^2$, we get
\[
\Psi_\eta(Y_{k+1}) \le \Psi_\eta(Y_k) - \gamma\left(\frac{3}{4} - \Lip_1(\Psi_\eta)\gamma\right)\|\nabla \Psi_\eta(Y_k)\|^2 + \gamma(1 + \Lip_1(\Psi_\eta)\gamma)\|e_{k+1}\|^2.
\]
Now choose the stepsize such that $0 < \gamma \le \frac{1}{4\Lip_1(\Psi_\eta)}$. Then $\Lip_1(\Psi_\eta)\gamma \le \frac{1}{4}$, and consequently
\[
\frac{3}{4} - \Lip_1(\Psi_\eta)\gamma \ge \frac{3}{4} - \frac{1}{4} = \frac{1}{2},
\]
while
\[
1 + \Lip_1(\Psi_\eta)\gamma \le 1 + \frac{1}{4} = \frac{5}{4} \le 2.
\]
Hence the inequality simplifies to
\[
\Psi_\eta(Y_{k+1}) \le \Psi_\eta(Y_k) - \frac{\gamma}{2}\|\nabla \Psi_\eta(Y_k)\|^2 + 2\gamma \|e_{k+1}\|^2.
\]
Rearranging terms gives
\[
\frac{\gamma}{2}\|\nabla \Psi_\eta(Y_k)\|^2 \le \Psi_\eta(Y_k) - \Psi_\eta(Y_{k+1}) + 2\gamma \|e_{k+1}\|^2.
\]
Taking expectations on both sides (conditional on $Y_k$), we obtain
\[
\frac{\gamma}{2}\mathbb{E}\|\nabla \Psi_\eta(Y_k)\|^2 \le \mathbb{E}[\Psi_\eta(Y_k)] - \mathbb{E}[\Psi_\eta(Y_{k+1})] + 2\gamma \mathbb{E}\|e_{k+1}\|^2.
\]
Summing this inequality from $k = 0$ to $K-1$, the intermediate terms $\mathbb{E}[\Psi_\eta(Y_1)], \ldots, \mathbb{E}[\Psi_\eta(Y_{K-1})]$ telescope, leaving
\[
\frac{\gamma}{2}\sum_{k=0}^{K-1} \mathbb{E}\|\nabla \Psi_\eta(Y_k)\|^2
\le
\mathbb{E}[\Psi_\eta(Y_0)] - \mathbb{E}[\Psi_\eta(Y_K)] + 2\gamma \sum_{k=0}^{K-1} \mathbb{E}\|e_{k+1}\|^2.
\]
Since $\Psi_\eta$ is bounded below by $\Psi_\eta^*$, we have $\mathbb{E}[\Psi_\eta(Y_K)] \ge \Psi_\eta^*$, so $-\mathbb{E}[\Psi_\eta(Y_K)] \le -\Psi_\eta^*$. Therefore,
\[
\frac{\gamma}{2}\sum_{k=0}^{K-1} \mathbb{E}\|\nabla \Psi_\eta(Y_k)\|^2
\le
\Psi_\eta(Y_0) - \Psi_\eta^* + 2\gamma \sum_{k=0}^{K-1} \mathbb{E}\|e_{k+1}\|^2.
\]
Finally, dividing both sides by $\gamma K / 2$ yields
\[
\frac{1}{K}\sum_{k=0}^{K-1} \mathbb{E}\|\nabla \Psi_\eta(Y_k)\|^2
\le
\frac{2(\Psi_\eta(Y_0) - \Psi_\eta^*)}{\gamma K}
+
\frac{4}{K}\sum_{k=0}^{K-1} \mathbb{E}\|e_{k+1}\|^2,
\]
which is the desired inequality.
\end{proof}

Obtaining an approximate stationary point of the smoothed objective is likely to deliver accurate solution candidates to the original non-smooth problem, only when the smoothing parameter $\eta$ is chosen sufficiently small. To quantify the dependency on the dimension and the smoothing of the total complexity, we finish this section with the following simple Lemma, which we will use repeatedly the rest of this paper. 
\begin{lem}
\label{lem:3.2}
    Assume that $\inf_{y\in\R^{d}}\Psi(y)>-\infty$. Let $\Delta_{0}:=\Psi(Y_{0})-\inf_{y\in\R^{d}}\Psi(y)$. Then, for all $\eta>0$, 
\begin{equation}\label{eq:optigap:smooth}
\Psi_{\eta}(Y_{0})-\inf_{y\in\mathbb{R}^{d}}\Psi_{\eta}(y)\leq \Delta_{0}+2\eta\Lip_{0}(\Psi)\sqrt{d}. 
\end{equation}
\end{lem}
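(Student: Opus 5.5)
The estimate follows from the uniform smoothing error in Lemma~\ref{lem:gaussian-smoothing}(2), applied twice. Since $\Psi$ is Lipschitz with constant $\Lip_0(\Psi)$ by Assumption~\ref{ass:lipschitz-upper}, that lemma gives
\[
\sup_{y\in\R^d}|\Psi_\eta(y)-\Psi(y)|\le \eta\Lip_0(\Psi)\sqrt d .
\]
Write $\Psi^\star:=\inf_{y}\Psi(y)>-\infty$. The plan is to split the smoothed gap into three pieces:
\[
\Psi_\eta(Y_0)-\inf_y\Psi_\eta(y) = \bigl(\Psi_\eta(Y_0)-\Psi(Y_0)\bigr) + \bigl(\Psi(Y_0)-\Psi^\star\bigr) + \bigl(\Psi^\star-\inf_y\Psi_\eta(y)\bigr).
\]

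The first piece is at most $\eta\Lip_0(\Psi)\sqrt d$ by the uniform bound, evaluated at $y=Y_0$. The second piece is exactly $\Delta_0$.

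For the third piece, take any $y\in\R^d$. Then $\Psi_\eta(y)\ge \Psi(y)-\eta\Lip_0(\Psi)\sqrt d\ge \Psi^\star-\eta\Lip_0(\Psi)\sqrt d$. Taking the infimum over $y$ gives $\inf_y\Psi_\eta(y)\ge\Psi^\star-\eta\Lip_0(\Psi)\sqrt d$. In particular $\inf_y\Psi_\eta>-\infty$, so the left-hand side of \eqref{eq:optigap:smooth} is finite, and the third piece is at most $\eta\Lip_0(\Psi)\sqrt d$.

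Summing the three bounds yields $\Delta_0+2\eta\Lip_0(\Psi)\sqrt d$, which is \eqref{eq:optigap:smooth}.

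The only care needed is that the infimum of $\Psi_\eta$ need not be attained. The argument handles this by bounding $\Psi_\eta(y)$ from below pointwise and only then passing to the infimum, rather than working with a minimizer. Beyond that, the proof is a direct application of Lemma~\ref{lem:gaussian-smoothing}.
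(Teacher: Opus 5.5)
Your proposal is correct and follows essentially the same route as the paper: both apply the two-sided uniform smoothing bound $|\Psi_\eta(y)-\Psi(y)|\le\eta\Lip_0(\Psi)\sqrt d$ from Lemma~\ref{lem:gaussian-smoothing}, once at $Y_0$ and once to lower-bound $\inf_y\Psi_\eta(y)$ by $\inf_y\Psi(y)-\eta\Lip_0(\Psi)\sqrt d$. Your explicit pointwise-then-infimum step handles a possibly unattained infimum slightly more carefully than the paper's one-line ``hence''.
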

\begin{proof}
    Let $\Delta_{0}:=\Psi(Y_{0})-\inf_{y\in\mathbb{R}^{d}}\Psi(y)$. By Lemma \ref{lem:gaussian-smoothing}, we have 
$$
\Psi_{\eta}(y)\geq -\eta\Lip_{0}(\Psi)\sqrt{d}+\Psi(y),\text{ and }\Psi_{\eta}(y)\leq \Psi(y)+\eta\Lip_{0}(\Psi)\sqrt{d}.
$$
Hence, $$\Psi_{\eta}(y)-\inf_{y\in\mathbb{R}^{d}}\Psi_{\eta}(y)\leq \Psi(y)-\inf_{y\in\mathbb{R}^{d}}\Psi(y)+2\eta\Lip_{0}(\Psi)\sqrt{d}.$$
In particular, eq. \eqref{eq:optigap:smooth} holds.
\end{proof}
Theorem \ref{thm:controlled-mse-convergence} shows that the convergence rate is controlled by two terms: the optimization error and the mean-square error of the gradient
estimator. The following corollary makes this trade-off explicit.

\begin{cor}[Iteration complexity of the smoothed objective]
\label{cor:smoothed-stationarity}
Let the assumptions of Theorem~\ref{thm:controlled-mse-convergence} hold. Let
\(\kappa\) be uniformly distributed on \(\{0,\ldots,K-1\}\), independently
of all algorithmic randomness. Suppose that, for all \(k=0,\ldots,K-1\), $\mathbb E[\|e_{k+1}\|^2]\le\frac{\varepsilon^2}{8}$. Choose $0<\gamma\le \frac{1}{4\Lip_1(\Psi_\eta)}$ and $K\ge4(\Delta_0+2\eta\Lip_0(\Psi)(\sqrt{d})(\gamma\varepsilon^2)^{-1}$,
then $\mathbb E[\|\nabla\Psi_\eta(Y_{\kappa})\|^2]\le\varepsilon^2$. In particular, with $\gamma=\frac{\eta}{4\sqrt{d}\Lip_{0}(\Psi)}$ and $\eta \Lip_0(\Psi)\sqrt{d}\lesssim 1$, the required number of outer iterations to achieve $\mathbb E[\|\nabla\Psi_\eta(Y_{\kappa})\|^2]\le\varepsilon^2$ is
\[
    K
    =
    \mathcal{O}\left(
        \frac{\sqrt d\,\Lip_0(\Psi)}{\eta\,\varepsilon^2}
    \right).
    \label{eq:outer-complexity-smoothed}
\]
\end{cor}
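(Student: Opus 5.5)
The plan is to combine Theorem~\ref{thm:controlled-mse-convergence} with Lemma~\ref{lem:3.2} and then take expectation over the independent uniform index \(\kappa\).

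\textbf{Step 1: rewrite the left-hand side.} Since \(\kappa\sim{\rm Unif}\{0,\ldots,K-1\}\) is independent of the iterates, conditioning on \(\kappa\) gives
\[
\mathbb E\big[\|\nabla\Psi_\eta(Y_\kappa)\|^2\big]=\frac1K\sum_{k=0}^{K-1}\mathbb E\big[\|\nabla\Psi_\eta(Y_k)\|^2\big].
\]
This is exactly the left-hand side of \eqref{eq:controlled-mse-convergence}.

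\textbf{Step 2: bound the two terms of \eqref{eq:controlled-mse-convergence}.}
\begin{itemize}
\item Error term: the hypothesis \(\mathbb E\|e_{k+1}\|^2\le\varepsilon^2/8\) bounds it by \(4\cdot\varepsilon^2/8=\varepsilon^2/2\).
\item Gap term: take \(\Psi_\eta^\star=\inf_y\Psi_\eta(y)\). This is finite because \(\inf\Psi>-\infty\) and, by Lemma~\ref{lem:gaussian-smoothing}, \(\Psi_\eta\ge\Psi-\eta\Lip_0(\Psi)\sqrt d\). Lemma~\ref{lem:3.2} then gives \(\Psi_\eta(Y_0)-\Psi_\eta^\star\le \Delta_0+2\eta\Lip_0(\Psi)\sqrt d\). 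Hence the gap term is at most \(2(\Delta_0+2\eta\Lip_0(\Psi)\sqrt d)/(\gamma K)\).
\item Under the stated lower bound on \(K\), namely \(K\ge 4(\Delta_0+2\eta\Lip_0(\Psi)\sqrt d)/(\gamma\varepsilon^2)\), the gap term is at most \(\varepsilon^2/2\).
\end{itemize}
Summing the two contributions yields \(\mathbb E\|\nabla\Psi_\eta(Y_\kappa)\|^2\le\varepsilon^2\).

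\textbf{Step 3: the specific step size.} By Lemma~\ref{lem:gaussian-smoothing}(4), \(\Lip_1(\Psi_\eta)\le\sqrt d\Lip_0(\Psi)/\eta\). Therefore \(\gamma=\eta/(4\sqrt d\Lip_0(\Psi))\) satisfies \(\gamma\le 1/(4\Lip_1(\Psi_\eta))\), so Theorem~\ref{thm:controlled-mse-convergence} applies. Substituting this \(\gamma\) into the bound on \(K\) gives
\[
K\ge \frac{16\sqrt d\,\Lip_0(\Psi)\,(\Delta_0+2\eta\Lip_0(\Psi)\sqrt d)}{\eta\,\varepsilon^2}.
\]
Using \(\eta\Lip_0(\Psi)\sqrt d\lesssim1\), the factor \(\Delta_0+2\eta\Lip_0(\Psi)\sqrt d\) is \(\mathcal O(1)\), with \(\Delta_0\) treated as a problem constant. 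This gives the claimed \(K=\mathcal O(\sqrt d\Lip_0(\Psi)/(\eta\varepsilon^2))\).

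\textbf{Main obstacle.} There is no real difficulty here; the only care needed is bookkeeping. Two points matter:
\begin{itemize}
\item Theorem~\ref{thm:controlled-mse-convergence} must be applied with \(\Psi_\eta^\star\) equal to the infimum of \(\Psi_\eta\), so that Lemma~\ref{lem:3.2} is directly applicable.
\item The independence of \(\kappa\) is what justifies the averaging identity in Step 1.
\end{itemize}
The garbled parenthesis in the stated lower bound on \(K\) should be read as \(4(\Delta_0+2\eta\Lip_0(\Psi)\sqrt d)/(\gamma\varepsilon^2)\), and the proof uses this reading.
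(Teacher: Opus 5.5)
Your proposal is correct and follows the paper's proof essentially step for step. Both arguments use the averaging identity from the independent uniform index $\kappa$, apply Theorem~\ref{thm:controlled-mse-convergence} with the error term bounded by $\varepsilon^2/2$, invoke Lemma~\ref{lem:3.2} to bound the smoothed gap by $\Delta_0+2\eta\Lip_0(\Psi)\sqrt d$, and substitute $\gamma=\eta/(4\sqrt d\Lip_0(\Psi))$, justified by $\Lip_1(\Psi_\eta)\le\sqrt d\Lip_0(\Psi)/\eta$.
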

\begin{proof}
Since $\kappa$ is uniform on $\{0,\ldots,K-1\}$, we have
\[
\mathbb E\|\nabla\Psi_\eta(Y_{\kappa})\|^2
=
\frac1K
\sum_{k=0}^{K-1}
\mathbb E\|\nabla\Psi_\eta(Y_k)\|^2.
\]
Applying Theorem ~\ref{thm:controlled-mse-convergence}, using the assumed bound $\mathbb E[\|e_{k+1}\|^2]\le \varepsilon^2/8$ for all $k$, and the function variation bound
$\Psi_{\eta}(Y_{0})-\Psi^{*}_{\eta}\leq \Delta_{0}+2\eta\Lip_{0}(\Psi)\sqrt{d},$
established in Lemma \ref{lem:3.2}, where $\Delta_0=\Psi(Y_0)-\inf_{y\in\mathbb R^d}\Psi(y)$, we obtain
\[
\mathbb E\|\nabla\Psi_\eta(Y_{\kappa})\|^2
\le
\frac{2(\Delta_0 + 2\eta \Lip_0(\Psi)\sqrt{d})}{\gamma K}
+
\frac{\varepsilon^2}{2}.
\]
To ensure the first term is bounded by $\varepsilon^2/2$, it is sufficient to choose
$$K \ge \frac{4(\Delta_0 + 2\eta \Lip_0(\Psi)\sqrt{d})}{\gamma \varepsilon^2}.$$
 With this choice, the first term satisfies
\[
\frac{2(\Delta_0 + 2\eta \Lip_0(\Psi)\sqrt{d})}{\gamma K}
\le
\frac{2(\Delta_0 + 2\eta \Lip_0(\Psi)\sqrt{d})}{\gamma}
\cdot
\frac{\gamma \varepsilon^2}{4(\Delta_0 + 2\eta \Lip_0(\Psi)\sqrt{d})}
=
\frac{\varepsilon^2}{2}.
\]
Combining this with the estimator-error term gives
$
\mathbb E\|\nabla\Psi_\eta(Y_{\kappa})\|^2
\le
\frac{\varepsilon^2}{2} + \frac{\varepsilon^2}{2}
=
\varepsilon^2,
$
which proves the claim. Since $\Lip_{1}(\Psi_{\eta})\leq \frac{\sqrt{d}\Lip_{0}(\Psi)}{\eta}$, we can choose the constant step size $\gamma=\frac{\eta}{4\sqrt{d}\Lip_{0}(\Psi)}$. This choice gives us the number of iterations
\begin{align*}
\frac{8\sqrt{d}\Lip_{0}(\Psi)(\Delta_{0}+2\eta\Lip_{0}(\Psi)\sqrt{d})}{\eta K}\leq\frac{\varepsilon^{2}}{2}
\Leftrightarrow
K&\geq \frac{16\sqrt{d}\Lip_{0}(\Psi)(\Delta_{0}+2\eta\Lip_{0}(\Psi)\sqrt{d})}{\eta\varepsilon^{2}}\\
&=\mathcal{O}\left(\frac{\sqrt{d}\Lip_{0}(\Psi)}{\eta\varepsilon^{2}}\right).
\end{align*}
This completes the proof.
\end{proof}

\subsection{Relation to stationarity of the original objective}
\label{subsec:original-stationarity}
The stationarity result above concerns the Gaussian-smoothed objective
\(\Psi_\eta\). If the original objective \(\Psi\) is differentiable with
Lipschitz gradient, then smoothed stationarity can be transferred to
stationarity of \(\Psi\). If \(\Psi\) is only assumed to be Lipschitz continuous, then
\(\|\nabla\Psi_\eta(y)\|\) should instead be interpreted as a smoothed
stationarity measure.
\begin{cor}[From smoothed to original stationarity]
\label{cor:original-stationarity}
Assume, in addition, that \(\Psi\) is differentiable and \(\Lip_1(\Psi)\)-smooth. 
Then $\|\nabla\Psi_\eta(y)-\nabla\Psi(y)\|\le\Lip_1(\Psi)\eta\sqrt d$ for all $y\in\R^d$. Consequently, if a random iterate \(Y_{\kappa}\) satisfies $\mathbb E\|\nabla\Psi_\eta(Y_{\kappa})\|^2\le\frac{\varepsilon^2}{2}$ and $\eta\le\frac{{\varepsilon}}{2\Lip_1(\Psi)\sqrt d}$,
then $$\mathbb E\|\nabla\Psi(Y_{\kappa})\|^2\le\varepsilon^2.$$
\end{cor}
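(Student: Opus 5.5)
The proof has two parts: a pointwise bound on the smoothing bias of the gradient, and a transfer of the mean-square bound. For the first part, since \(\Psi\) is differentiable and Lipschitz, we may differentiate under the expectation in the definition \(\Psi_\eta(y)=\mathbb E_U[\Psi(y+\eta U)]\). This gives \(\nabla\Psi_\eta(y)=\mathbb E_U[\nabla\Psi(y+\eta U)]\), which is the route we use for the identity here rather than item~3 of Lemma~\ref{lem:gaussian-smoothing}. Justification is dominated convergence, because \(\|\nabla\Psi\|\le\Lip_0(\Psi)\) by Assumption~\ref{ass:lipschitz-upper}. Then, using Jensen's inequality and the \(\Lip_1(\Psi)\)-smoothness of \(\Psi\),
\[
\|\nabla\Psi_\eta(y)-\nabla\Psi(y)\|=\big\|\mathbb E_U[\nabla\Psi(y+\eta U)-\nabla\Psi(y)]\big\|\le \Lip_1(\Psi)\,\eta\,\mathbb E\|U\|\le \Lip_1(\Psi)\eta\sqrt d ,
\]
where \(\mathbb E\|U\|\le(\mathbb E\|U\|^2)^{1/2}=\sqrt d\). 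This is the first claim. An alternative that avoids differentiating under the integral is the representation \(\nabla\Psi_\eta(y)=\eta^{-1}\mathbb E[(\Psi(y+\eta U)-\Psi(y)-\eta\langle\nabla\Psi(y),U\rangle)U]\), using \(\mathbb E[UU^\top]=I\). With the descent-lemma bound this yields only \(\tfrac{\Lip_1\eta}{2}\mathbb E\|U\|^3\sim\eta d^{3/2}\), which is worse, so the first route is preferred.

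For the second part, apply \(\|a+b\|^2\le 2\|a\|^2+2\|b\|^2\) pointwise at \(y=Y_\kappa\):
\[
\|\nabla\Psi(Y_\kappa)\|^2\le 2\|\nabla\Psi_\eta(Y_\kappa)\|^2+2\Lip_1(\Psi)^2\eta^2 d .
\]
Take expectations, insert \(\mathbb E\|\nabla\Psi_\eta(Y_\kappa)\|^2\le\varepsilon^2/2\), and use \(\eta\le\varepsilon/(2\Lip_1(\Psi)\sqrt d)\), so that \(2\Lip_1(\Psi)^2\eta^2 d\le\varepsilon^2/2\). This gives the bound \(\varepsilon^2+\varepsilon^2/2\), which exceeds the claimed \(\varepsilon^2\) by a constant factor.

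The main obstacle is exactly this constant bookkeeping: the crude factor-\(2\) inequality loses a constant. To recover \(\varepsilon^2\), I would use the sharper inequality \(\|a+b\|^2\le(1+\alpha)\|a\|^2+(1+\alpha^{-1})\|b\|^2\), or the triangle inequality in \(L^2\):
\[
(\mathbb E\|\nabla\Psi\|^2)^{1/2}\le(\mathbb E\|\nabla\Psi_\eta\|^2)^{1/2}+\Lip_1(\Psi)\eta\sqrt d\le \tfrac{\varepsilon}{\sqrt2}+\tfrac{\varepsilon}{2}.
\]
Squaring gives \((1/\sqrt2+1/2)^2\varepsilon^2\approx1.46\,\varepsilon^2\), which still overshoots. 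So the stated constants cannot be closed by these standard inequalities. If needed, I would tighten the hypothesis, for instance to \(\eta\le(1-1/\sqrt2)\varepsilon/(\Lip_1(\Psi)\sqrt d)\), which makes the \(L^2\) triangle-inequality bound give exactly \(\varepsilon^2\); otherwise I would accept the conclusion up to a universal constant such as \(\tfrac32\varepsilon^2\). The structural argument is unchanged either way.
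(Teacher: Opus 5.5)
Your argument is the paper's argument. Both use the identity $\nabla\Psi_\eta(y)=\mathbb E_U[\nabla\Psi(y+\eta U)]$, the Jensen step with $\mathbb E\|U\|\le\sqrt d$, and the splitting $\|a+b\|^2\le 2\|a\|^2+2\|b\|^2$; your dominated-convergence justification of the identity is an addition the paper omits. Your constant bookkeeping is also correct, and on this point you are ahead of the paper. The paper ends with ``taking expectations and using the assumptions gives $\mathbb E\|\nabla\Psi(Y_\kappa)\|^2\le\varepsilon^2$'', but its two terms are $2\cdot\varepsilon^2/2+2\Lip_1(\Psi)^2\eta^2 d\le\varepsilon^2+\varepsilon^2/2$. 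So the paper's own proof only yields $\tfrac32\varepsilon^2$.

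The overshoot is not an artifact of crude inequalities: the corollary as stated is false. Take $Y_\kappa\equiv 0$ and a separable function $\Psi(z)=\langle a,z\rangle+\sum_{i=1}^d\psi(z_i)$, where $\psi(0)=\psi'(0)=0$ and $\psi'(t)=\Lip_1(\Psi)|t|$ for $|t|\le T$ with $T\gg\eta$. Outside $[-T,T]$, cap $\psi'$ at $\Lip_1(\Psi)T$ for $t>T$, and let it decrease with slope $\Lip_1(\Psi)$ to $0$ for $t<-T$. Then $\psi'$ is $\Lip_1(\Psi)$-Lipschitz and bounded, so $\Psi$ is $\Lip_1(\Psi)$-smooth, Lipschitz, and bounded below.

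For this $\Psi$ you get $\nabla\Psi_\eta(0)-\nabla\Psi(0)=\beta\mathbf 1$ with $\beta=\mathbb E[\psi'(\eta U_1)]\to\Lip_1(\Psi)\eta\sqrt{2/\pi}$ as $T/\eta\to\infty$. Choose $a=-(\varepsilon/\sqrt{2d}+\beta)\mathbf 1$. This gives $\|\nabla\Psi_\eta(0)\|^2=\varepsilon^2/2$ exactly. With $\eta=\varepsilon/(2\Lip_1(\Psi)\sqrt d)$ you then have $\|\nabla\Psi(0)\|=\varepsilon/\sqrt2+\beta\sqrt d\to(1/\sqrt2+1/\sqrt{2\pi})\varepsilon$, whose square is about $1.22\,\varepsilon^2>\varepsilon^2$.

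A change of hypothesis is therefore genuinely required, and your repairs are valid: tighten $\eta$ to $(1-1/\sqrt2)\varepsilon/(\Lip_1(\Psi)\sqrt d)$, or weaken the conclusion to $\tfrac32\varepsilon^2$. The repair closest to the paper's proof is to assume $\mathbb E\|\nabla\Psi_\eta(Y_\kappa)\|^2\le\varepsilon^2/4$. With $\eta\le\varepsilon/(2\Lip_1(\Psi)\sqrt d)$, the factor-$2$ splitting then gives exactly $\varepsilon^2$.
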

\begin{proof}
Since \(\Psi\) is \(\Lip_1(\Psi)\)-smooth, $\nabla\Psi_\eta(y)=\mathbb E_U[\nabla\Psi(y+\eta U)]$. Therefore,
\begin{align*}
    \|\nabla\Psi_\eta(y)-\nabla\Psi(y)\|
    =
    \|
        \mathbb E_U[
            \nabla\Psi(y+\eta U)-\nabla\Psi(y)
        ]
    \|
    &\le
    \mathbb E_U
    \|\nabla\Psi(y+\eta U)-\nabla\Psi(y)\|  \\
    &\le
    \Lip_1(\Psi)\eta\,\mathbb E\|U\|
    \le
    \Lip_1(\Psi)\eta\sqrt d.
\end{align*}
Hence,
\[
    \|\nabla\Psi(y)\|^2
    \le
    2\|\nabla\Psi_\eta(y)\|^2
    +
    2\|\nabla\Psi_\eta(y)-\nabla\Psi(y)\|^2
    \le
    2\|\nabla\Psi_\eta(y)\|^2
    +
    2\Lip_1(\Psi)^2\eta^2d.
\]
Taking expectations and using the assumptions gives $\mathbb E\|\nabla\Psi(Y_{\kappa})\|^2\le\varepsilon^2$.
\end{proof}

If \(\Psi\) is only Lipschitz continuous, then \(\nabla\Psi(y)\) may not exist and Corollary~\ref{cor:original-stationarity} is not applicable. In this case, we can perform a complexity analysis using a relaxed stationarity measure, involving the Goldstein subdifferential \cite{DD2019,DG2019,gold:original,gold}.  
We briefly recall the relevant definitions here, to keep the paper self-contained. 

Let $f:\mathbb{R}^{d}\to\bar{\mathbb{R}}=[-\infty,\infty]$ be a locally Lipschitz continuous function. Its generalised directional derivative in the sense of Clarke is defined as
    $$
    f^{\circ}(y;v):=\lim\sup_{y'\to y,t\to 0^{+}}\frac{f(y'+tv)-f(y')}{t}.
    $$
The Clarke subdifferential of $f$ at $y$ is the set 
$$
\partial_{C}f(y):=\{p\in\R^{d}\vert f^{\circ}(y;v)\geq \langle p,v\rangle \quad\forall v\in\R^{d}\}. 
$$
Given $\delta>0$, the Goldstein $\delta$-subdifferential of $f$ at $y\in\R^{d}$ is the set 
$$
\partial^{\delta}_{G}f(y)=\operatorname{Conv}\big(\bigcup_{y'\in\mathbb{B}(y,\delta)}\partial_{C}f(y')\big). 
$$
A random variable $Y\in L^{0}(\Omega,\R^{d})$ is called a $(\eps,\delta)$-Goldstein stationary point if 
$$
\mathbb{E}[\min\{\norm{p}:p\in\partial^{\delta}_{G}f(Y)\}]\leq \eps. 
$$
The Goldstein subdifferential and Gaussian smoothing are closely related to each other. Indeed, we have the following fact:
\begin{lem}[Theorem 3.6 in \cite{LPSY2024}]\label{lem:GS-inclusion}
    Let $f:\mathbb{R}^{d}\to\mathbb{R}$ be a Lipschitz continuous function and $\mathcal{C}\subset\R^{d}$ a nonempty convex compact subset. Let $f_{\eta}:\mathbb{R}^{d}\to\mathbb{R}$ denote its Gaussian smoothing. Then, for all $(\eps,\delta)>0$, we have 
    \begin{equation}\label{eq:GS}
    \nabla f_{\eta}(y)\in\partial^{\delta}_{G}f(y)+\eps\mathbb{B}(0,1) \qquad\forall \eta\in(0,\bar{\eta}],y\in\mathcal{C}
    \end{equation}
where 
\begin{align*}
    &\bar{\eta}:=\min\{1,\delta/\Gamma\},\;\Gamma:=[-d\mathcal{W}_{-1}(\frac{-\nu^{2/d}}{2\pi e})]^{1/2},\; \nu:=\min\{\frac{\eps}{4\Lip_{0}(f)},(2\pi)^{d/2}-0.5\}
\end{align*}
$\mathcal{W}_{-1}$ denotes the negative branch of the Lambert $W$-function. 
\end{lem}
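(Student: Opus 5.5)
The plan is to write $\nabla f_\eta(y)$ as an average of Clarke subgradients and split it into a contribution from inside the ball $\mathbb B(y,\delta)$, which lies in $\partial^\delta_G f(y)$, and a Gaussian-tail contribution, which is at most $\eps$ in norm by the choice of $\Gamma$.

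\textbf{Representation.} Since $f$ is Lipschitz, Rademacher's theorem gives differentiability almost everywhere. Wherever $\nabla f(z)$ exists it lies in $\partial_C f(z)$ and satisfies $\|\nabla f(z)\|\le \Lip_0(f)$. Differentiating under the integral (dominated convergence) then gives
\[
\nabla f_\eta(y)=\mathbb E_U[\nabla f(y+\eta U)].
\]

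\textbf{Splitting the expectation.} Let $A:=\{\eta\|U\|\le\delta\}$ and $p:=\mathbb P(A^c)$. If $p=0$ there is nothing to prove. Otherwise
\[
\nabla f_\eta(y)=(1-p)\,a+p\,b,\qquad a:=\mathbb E[\nabla f(y+\eta U)\mid A],\quad b:=\mathbb E[\nabla f(y+\eta U)\mid A^c].
\]
The vector $a$ is a conditional average of Clarke subgradients taken at points of $\mathbb B(y,\delta)$. The union of these subdifferentials over the closed ball is bounded by $\Lip_0(f)$. Its convex hull is compact, since the union is compact by outer semicontinuity of $\partial_C f$ and the convex hull of a compact set in $\R^d$ is compact. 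Hence $a\in\partial^\delta_G f(y)$, by approximating the expectation with finite convex combinations. Also $\|a\|,\|b\|\le\Lip_0(f)$, so
\[
\operatorname{dist}\bigl(\nabla f_\eta(y),\partial^\delta_G f(y)\bigr)\le \|\nabla f_\eta(y)-a\| = p\,\|b-a\|\le 2\Lip_0(f)\,p.
\]
It therefore suffices to show $p\le \eps/(2\Lip_0(f))$, and since $\nu\le \eps/(4\Lip_0(f))$ it is enough to prove $p\le\nu$.

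\textbf{Tail bound (main obstacle).} Because $\eta\le\delta/\Gamma$, we have $p\le \mathbb P(\|U\|\ge\Gamma)=\mathbb P(\|U\|^2\ge d t)$ with $t:=\Gamma^2/d$. The chi-square Chernoff bound gives, for $t>1$,
\[
\mathbb P(\|U\|^2\ge dt)\le \bigl(t e^{1-t}\bigr)^{d/2}.
\]
By the definition of $\Gamma$, $t=-\mathcal W_{-1}(z)$ with $z=-\nu^{2/d}/(2\pi e)$. Using $\mathcal W_{-1}(z)e^{\mathcal W_{-1}(z)}=z$ gives $te^{-t}=\nu^{2/d}/(2\pi e)$, and therefore
\[
\bigl(te^{1-t}\bigr)^{d/2}=\nu\,(2\pi)^{-d/2}\le\nu .
\]
The delicate point is checking the domain of the Lambert function. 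We need $z\in[-1/e,0)$, which the cap $\nu\le(2\pi)^{d/2}-0.5$ is meant to guarantee, since it keeps $\nu^{2/d}/(2\pi)<1$. On the $-1$ branch we then get $t\ge1$, so the Chernoff bound applies. I would verify this inequality carefully and, if needed, absorb constants into the choice of $\nu$.

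\textbf{Conclusion.} The restriction $\eta\le1$ and the compact set $\mathcal C$ play no essential role, since all bounds are uniform in $y$. Combining the three steps gives $\nabla f_\eta(y)\in\partial^\delta_G f(y)+\eps\mathbb B(0,1)$ for all $\eta\in(0,\bar\eta]$ and all $y\in\mathcal C$.
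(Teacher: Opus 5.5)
Your proof is correct. The paper gives no argument of its own for this lemma; it quotes Theorem~3.6 of \cite{LPSY2024}. So your proposal works as a self-contained verification for the globally Lipschitz case, and it reproduces the stated constants exactly: $\Gamma$ is precisely the radius at which the $\chi^2_d$ Chernoff bound $(te^{1-t})^{d/2}$, with $t=\Gamma^2/d$, equals $\nu(2\pi)^{-d/2}$.

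The step you flagged as delicate closes without adjusting any constants. Since $0<\nu\le(2\pi)^{d/2}-0.5<(2\pi)^{d/2}$, the argument $z=-\nu^{2/d}/(2\pi e)$ lies in $(-1/e,0)$. On that interval $\mathcal W_{-1}$ takes values in $(-\infty,-1)$, so $t>1$ and the Chernoff bound applies. You even have slack: $p\le\nu(2\pi)^{-d/2}$ gives $\|\nabla f_\eta(y)-a\|\le 2\Lip_0(f)\,p\le \eps(2\pi)^{-d/2}/2<\eps$.

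You are also right that neither $\eta\le 1$ nor the compactness of $\mathcal C$ is used, because $\|\nabla f\|\le\Lip_0(f)$ holds uniformly. These hypotheses presumably come from the broader setting of \cite{LPSY2024}, where subgradient norms may grow polynomially in $\|x\|$. There one needs $y$ and the smoothing radius bounded to control the tail contribution.

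One small point of convention. Your compactness argument placing the conditional mean $a$ in $\partial^\delta_G f(y)$ assumes the closed ball in the Goldstein definition. If $\mathbb B(y,\delta)$ is read as open, condition instead on $\{\eta\|U\|<\delta\}$; the sphere is a Gaussian-null set, so nothing changes. Then use the fact that the mean of an integrable random vector with values in a convex subset of $\R^d$ lies in that subset, even when the subset is not closed.
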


In terms of these definitions, we can establish the following result: 
\begin{prop}\label{prop:goldstein-stationarity}
Let $\eps_{1},\eps_{2}$ be given positive constants. Fix the smoothing parameter $\eta\in(0,\bar{\eta}]$, so that \eqref{eq:GS} is satisfied for the given parameters $(\eps_{1},\eps_{2})$, and choose $\gamma=\frac{\eta}{4\sqrt{d}\Lip_{0}(\Psi)}$. Let the assumptions of Theorem~\ref{thm:controlled-mse-convergence} hold, and suppose that \(Y_k\in\mathcal C\) almost surely for \(k=0,\ldots,K-1\) for $\mathcal C\subset\R^d$ convex and compact. For $K \ge \frac{4(\Delta_0 + 2\eta \Lip_0(\Psi)\sqrt{d})}{\gamma \varepsilon_1^2}$ let
\(\kappa\) be uniformly distributed on \(\{0,\ldots,K-1\}\), independently
of all algorithmic randomness. Suppose that, for all \(k=0,\ldots,K-1\), $\mathbb E[\|\widehat{V}_{k+1}-\nabla\Psi_\eta(Y_k)\|^2]\le\frac{\eps_{1}^{2}}{8}$. Then, the random variable $Y_{\kappa}\in L^{0}(\Omega,\mathbb{R}^{d})$ is a $(2\eps_{1},\eps_2)$-Goldstein stationary point. 
\end{prop}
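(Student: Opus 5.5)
The plan is to combine Corollary~\ref{cor:smoothed-stationarity} with the inclusion of Lemma~\ref{lem:GS-inclusion}, and then pass from a mean-square bound to a bound on the expected norm via Jensen's inequality.

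\emph{Step 1 (smoothed stationarity).} The hypotheses on $\gamma$, $K$ and the error bound $\mathbb E\|\widehat V_{k+1}-\nabla\Psi_\eta(Y_k)\|^2\le\varepsilon_1^2/8$ are exactly those of Corollary~\ref{cor:smoothed-stationarity} with $\varepsilon=\varepsilon_1$. The choice $\gamma=\eta/(4\sqrt d\Lip_0(\Psi))$ satisfies $\gamma\le 1/(4\Lip_1(\Psi_\eta))$ because $\Lip_1(\Psi_\eta)\le \sqrt d\Lip_0(\Psi)/\eta$ by Lemma~\ref{lem:gaussian-smoothing}. Lemma~\ref{lem:3.2} controls the initial gap. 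The corollary therefore gives
\[
\mathbb E\|\nabla\Psi_\eta(Y_\kappa)\|^2\le\varepsilon_1^2 .
\]
Jensen's inequality then yields $\mathbb E\|\nabla\Psi_\eta(Y_\kappa)\|\le\varepsilon_1$.

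\emph{Step 2 (pointwise inclusion).} Since $Y_k\in\mathcal C$ almost surely for every $k\le K-1$, also $Y_\kappa\in\mathcal C$ almost surely. Lemma~\ref{lem:GS-inclusion} is applied to $f=\Psi$, which is Lipschitz by Assumption~\ref{ass:lipschitz-upper}, with tolerance $\varepsilon_1$ and radius $\delta=\varepsilon_2$. Because $\eta\le\bar\eta$, almost surely
\[
\nabla\Psi_\eta(Y_\kappa)=p+r,\qquad p\in\partial_G^{\varepsilon_2}\Psi(Y_\kappa),\quad \|r\|\le\varepsilon_1 .
\]
Hence
\[
\min\{\|q\|:q\in\partial_G^{\varepsilon_2}\Psi(Y_\kappa)\}\le\|p\|\le\|\nabla\Psi_\eta(Y_\kappa)\|+\varepsilon_1 .
\]
The minimum is attained because the Goldstein subdifferential of a Lipschitz function is a nonempty compact convex set: it is the convex hull of a bounded closed union.

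\emph{Step 3 (expectation).} Taking expectations in Step~2 and inserting the bound from Step~1 gives
\[
\mathbb E\bigl[\min\{\|q\|:q\in\partial_G^{\varepsilon_2}\Psi(Y_\kappa)\}\bigr]\le \varepsilon_1+\varepsilon_1=2\varepsilon_1 ,
\]
which is the definition of a $(2\varepsilon_1,\varepsilon_2)$-Goldstein stationary point.

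The main technical point is measurability of $\omega\mapsto\operatorname{dist}(0,\partial_G^{\varepsilon_2}\Psi(Y_\kappa(\omega)))$, which is needed for the expectation to make sense. I would handle it by noting that the map $y\mapsto\operatorname{dist}(0,\partial^\delta_G\Psi(y))$ is lower semicontinuous, since the set-valued map $y\mapsto\partial^\delta_G\Psi(y)$ has a closed graph and is locally bounded; hence it is Borel. Alternatively, one can interpret the expectation as an outer expectation, in which case the bound above still holds.
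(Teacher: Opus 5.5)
Your proposal is correct and follows the paper's proof essentially step for step. You apply Lemma~\ref{lem:GS-inclusion} pointwise at $Y_\kappa$ with $(\varepsilon,\delta)=(\varepsilon_1,\varepsilon_2)$ to get $\min\{\|p\|:p\in\partial_G^{\varepsilon_2}\Psi(Y_\kappa)\}\le\|\nabla\Psi_\eta(Y_\kappa)\|+\varepsilon_1$, then take expectations and combine Corollary~\ref{cor:smoothed-stationarity} with Jensen's inequality; your extra checks (admissibility of the step size, attainment of the minimum, and Borel measurability of $y\mapsto\operatorname{dist}(0,\partial_G^{\varepsilon_2}\Psi(y))$ via lower semicontinuity) are sound details that the paper leaves implicit.
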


\begin{proof}

Since $\nabla\Psi_{\eta}(Y_{\kappa})\in\partial^{\delta}_{G}\Psi(Y_{\kappa})+\eps_1\mathbb{B}(0,1)$, it follows 
$$
\min\{\norm{p}:p\in\partial^{\delta}_{G}\Psi(Y_{\kappa})\}\leq \norm{\nabla\Psi_{\eta}(Y_{\kappa})}+\eps_1.
$$
Hence, choosing $\delta=\eps_{2}$ and $\eps=\eps_{1}$, we obtain for $K=\mathcal{O}\left(\frac{\sqrt{d}\Lip_{0}(\Psi)}{\eta\eps_{1}^{2}}\right)$ from Corollary \ref{cor:smoothed-stationarity}
\begin{align*}
\Ex[\min\{\norm{p}:p\in\partial^{\eps_{2}}_{G}\Psi(Y_{\kappa})\}]\leq \eps_{1}+\mathbb{E}[\norm{\nabla\Psi_{\eta}(Y_{\kappa})}]&\leq \eps_{1}+\sqrt{\Ex[\norm{\nabla\Psi_{\eta}(Y_{\kappa})}^{2}]}\leq \eps_{1} +\eps_{1}=2\eps_{1}\,.
\end{align*}
\end{proof}
Proposition~\ref{prop:goldstein-stationarity} shows that the smoothed stationarity guarantee is meaningful even when the original objective is
nonsmooth. In particular, it is sufficient to use the stepsize \(\gamma=\frac{\eta(\eps_1,\eps_2)}{4\sqrt d\,\Lip_0(\Psi)}\),
and number of iterations
\(K=\mathcal{O}(\frac{\sqrt d\,\Lip_0(\Psi)}{\eta(\eps_1,\eps_2)\eps_1^2})\) to achieve a $(2\eps_{1},\eps_2)$-Goldstein stationary point. The parameter \(\eps_1\) controls the norm of the smoothed gradient and hence the Goldstein stationarity residual, while
\(\eps_2\) determines the radius of the neighborhood over which nearby Clarke subgradients are averaged. The smoothing parameter \(\eta(\eps_1,\eps_2)\) must be
small enough relative to this radius. Thus, decreasing \(\eta(\eps_1,\eps_2)\) improves the
connection between Gaussian-smoothed stationarity and nonsmooth stationarity of
\(\Psi\), but it also increases the outer iteration complexity through the smoothness constant \(\Lip_1(\Psi_\eta)=\mathcal{O}(\eta^{-1})\). The condition \eqref{eq:GS} is guaranteed by
Lemma~\ref{lem:GS-inclusion} whenever the iterates lie almost surely in a compact convex set \(\mathcal C\) and \(\eta\le\bar\eta(\eps_1,\eps_2)\) for the corresponding \((\eps_1,\eps_2)\). This can be achieved automatically in the projected setting with a compact feasible set. In the unconstrained case, it requires an a priori bounded-iterates assumption.

\section{Cost accounting and complexity analysis}
\label{sec:solver-complexity}
The estimator and convergence results in Sections~\ref{sec:method} and~\ref{sec:convergence} are formulated in terms of the abstract quantities
\(b_\ell\) and \(v_{\ell+1}\), defined in \eqref{eq:lower-level-bias}--\eqref{eq:level-difference-error}. The purpose of
this section is twofold. First, we derive single-level and multilevel complexity bounds from generic decay assumptions on these quantities. Second, we verify these assumptions for representative lower-level solvers, including gradient descent and FISTA. A key point is that the complexity analysis of the upper level only requires uniform bounds on the lower-level bias and on the coupled level differences. The specific lower-level algorithm enters the theory only through these two estimates and through the cost of one sample at each level.

\subsection{Abstract complexity framework}
\label{subsec:abstract-complexity}

We now derive the cost of achieving a prescribed mean-square accuracy for the single-level and multilevel gradient estimators. The results in this subsection are independent of the particular lower-level solver. The solver enters only through the bias sequence \(b_\ell\), the level-difference sequence \(v_{\ell+1}\), and the cost of one sample at each level. Let \(c_\ell>0\) denote the computational cost of producing one level-\(\ell\) sample. This includes the cost of computing the lower-level approximation \(\widehat h_\ell(y,D)\) at the points required by the finite-difference estimator. We use the convention
\[
    v_0 := 1,
    \qquad
    c_0 := \text{cost of one level-zero sample}.
\]
The target accuracy in this subsection is
\begin{equation}\label{eq:gradient-mse-target}
\mathbb E\Big[\big\|\widehat{V}-\nabla\Psi_\eta(y)\big\|^2\Big]\le\varepsilon^2,
\end{equation}
for a fixed smoothing parameter \(\eta>0\) and $\widehat{V}\in\{\GhatSL(y),\GhatML(y)\}$. By Lemma~\ref{lem:gradient-mse}, achieving this accuracy requires the upper bounding terms inside the parentheses in \eqref{eq:sl-gradient-mse} or \eqref{eq:ml-gradient-mse} to be of order \(\varepsilon_\eta:=C_{d,\eta}^{-1}\varepsilon^2\).
\begin{ass}[Abstract lower-level rates and costs]
\label{ass:abstract-ll-rates}
The nonnegative sequences of biases \((b_\ell)_{\ell\ge0}\) and level-difference errors
\((v_\ell)_{\ell\ge1}\) defined in \eqref{eq:lower-level-bias}--\eqref{eq:level-difference-error} satisfy $b_\ell<\infty$, $v_\ell<\infty$
and  \(b_\ell\to0\) as \(\ell\to\infty\). Moreover, each level-\(\ell\) sample is associated with positive cost \(c_\ell\), \(\ell\ge0\). 
\end{ass}
The cost sequence $(c_\ell)_{\ell\ge0}$ 
enters the complexity analysis given below. Since the lower-level hierarchies considered here are generated by iterative solvers, computing level \(\ell\) also provides level \(\ell-1\) along the same trajectory, so that the cost of a coupled level correction is of order \(c_\ell\). The uniformity of the bias and level difference errors over the parameters $y\in\R^d$ is discussed in Appendix~\ref{sec:parameter-transformation} in more detail. Throughout this section, we use the standard continuous relaxation of the MLMC allocation problem, treating the level sample numbers $N_\ell$ as positive real variables and disregarding integer-rounding effects in the asymptotic cost estimates.

\begin{theorem}[Generic estimator complexity]
\label{thm:generic-estimator-complexity}
Suppose that Assumptions~\ref{ass:meas-integrability}, \ref{ass:lipschitz-loss}, \ref{ass:lipschitz-upper}, \ref{ass:fd-moment-bounds} and \ref{ass:abstract-ll-rates} hold. Fix \(\eta>0\) and \(\varepsilon>0\). Define \(\varepsilon_\eta:=C_{d,\eta}^{-1}\varepsilon^2\), where $C_{d,\eta}$ is the constant defined in Lemma \ref{lem:gradient-mse}.
\begin{enumerate}
    \item[\rm(i)] Let \(L\) be chosen such that $b_L \lesssim \varepsilon_\eta$. The single-level estimator with $N\propto\varepsilon_\eta^{-1}$ satisfies $\mathbb E[\|\GhatSL_{\eta,L,N}(y)-\nabla\Psi_\eta(y)\|^2]\le\varepsilon^2$.
    Its cost is
    \begin{equation}\label{eq:generic-sl-cost}
    \operatorname{Cost}_{\rm SL}:=N\,c_L\lesssim\varepsilon_\eta^{-1}c_L.
    \end{equation}
    \item[\rm(ii)]
    Let \(L\) again satisfy $b_L \lesssim \varepsilon_\eta$. Define $S_L:=\sum_{\ell=0}^{L}\sqrt{v_\ell c_\ell},\ v_0:=1$.
    Choose the level sample sizes according to
    \begin{equation}\label{eq:generic-ml-sample-allocation}
    N_\ell\propto  \varepsilon_\eta^{-1}S_L\sqrt{\frac{v_\ell}{c_\ell}},\qquad \ell=0,\ldots,L\,.
    \end{equation}
    Then $\mathbb E[\|\GhatML_{\eta,L,\mathbf N}(y)-\nabla\Psi_\eta(y)\|^2]\le\varepsilon^2$ and the corresponding cost satisfies
    \begin{equation}\label{eq:generic-ml-cost-with-ceiling} \operatorname{Cost}_{\rm ML}:=\sum_{\ell=0}^{L}N_\ell \,c_\ell\lesssim\varepsilon_\eta^{-1}\Big(\sum_{\ell=0}^{L}\sqrt{v_\ell c_\ell}\Big)^2.
    \end{equation}
\end{enumerate}
\end{theorem}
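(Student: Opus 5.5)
The plan is to reduce both parts directly to the mean-square bounds of Lemma~\ref{lem:gradient-mse}, and then, for the multilevel part, to solve the standard MLMC allocation problem in its continuous relaxation. Throughout, $C_{d,\eta}$ is the constant from Lemma~\ref{lem:gradient-mse}, and $\varepsilon_\eta=C_{d,\eta}^{-1}\varepsilon^2$. Reaching the target \eqref{eq:gradient-mse-target} then amounts to making the bracketed term in \eqref{eq:sl-gradient-mse} or \eqref{eq:ml-gradient-mse} at most $\varepsilon_\eta$. Concretely, we fix the implicit constants in "$\lesssim$" and "$\propto$" so that the bias contribution is at most $\varepsilon_\eta/2$ and the sampling contribution is at most $\varepsilon_\eta/2$.

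For (i), we choose $L$ with $b_L\le\varepsilon_\eta/2$, which is possible since $b_\ell\to0$ by Assumption~\ref{ass:abstract-ll-rates}. We take $N=\lceil 2\varepsilon_\eta^{-1}\rceil$. Then \eqref{eq:sl-gradient-mse} gives an MSE of at most $C_{d,\eta}(\varepsilon_\eta/2+\varepsilon_\eta/2)=\varepsilon^2$. Each of the $N$ samples costs $c_L$, because $c_L$ already accounts for the two lower-level evaluations at $y$ and $y+\eta U_i$. Hence $\operatorname{Cost}_{\rm SL}=Nc_L\lesssim\varepsilon_\eta^{-1}c_L$, where the ceiling is absorbed under the continuous-relaxation convention.

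For (ii), we take the same $L$ and use the convention $v_0=1$. This lets us rewrite the variance part of \eqref{eq:ml-gradient-mse} uniformly as $\sum_{\ell=0}^L v_\ell/N_\ell$. We then set
\[
N_\ell=2\varepsilon_\eta^{-1}S_L\sqrt{v_\ell/c_\ell}.
\]
If some $v_\ell=0$, that correction vanishes identically and the level can be dropped, so without loss of generality $v_\ell>0$. Substituting gives
\[
\sum_{\ell}\frac{v_\ell}{N_\ell}=\frac{\varepsilon_\eta}{2S_L}\sum_\ell\sqrt{v_\ell c_\ell}=\frac{\varepsilon_\eta}{2}.
\]
Together with $b_L\le\varepsilon_\eta/2$, Lemma~\ref{lem:gradient-mse} yields an MSE of at most $\varepsilon^2$. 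For the cost, we use that a coupled correction at level $\ell$ costs $O(c_\ell)$, since level $\ell-1$ comes for free along the same solver trajectory. Then
\[
\sum_\ell N_\ell c_\ell=2\varepsilon_\eta^{-1}S_L\sum_\ell\sqrt{v_\ell c_\ell}=2\varepsilon_\eta^{-1}S_L^2,
\]
which is \eqref{eq:generic-ml-cost-with-ceiling}. As a remark, this allocation arises from minimizing $\sum N_\ell c_\ell$ subject to $\sum v_\ell/N_\ell=\varepsilon_\eta/2$ via a Lagrange multiplier, or equivalently via Cauchy--Schwarz:
\[
\Big(\sum\sqrt{v_\ell c_\ell}\Big)^2\le\Big(\sum N_\ell c_\ell\Big)\Big(\sum v_\ell/N_\ell\Big).
\]
Optimality is not needed for the claimed upper bound, only the explicit choice.

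The only delicate point is bookkeeping rather than analysis. We have to make sure the constant $C_{d,\eta}$ in Lemma~\ref{lem:gradient-mse} is genuinely independent of $L$ and $\mathbf N$, which that lemma asserts. We also have to confirm that the cost of a level-$\ell$ correction is indeed $\lesssim c_\ell$, which is the trajectory-reuse remark preceding the theorem. If integer rounding is taken into account, $N_\ell\le$ the real value $+1$ adds an extra $\sum_\ell c_\ell$ term. That term is ignored under the stated continuous relaxation and is the reason for the "ceiling" label on \eqref{eq:generic-ml-cost-with-ceiling}.
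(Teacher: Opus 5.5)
Your proposal is correct and follows the paper's proof essentially step for step: you plug the allocation into the bounds of Lemma~\ref{lem:gradient-mse}, use $v_0=1$ to write the variance term as $\sum_{\ell=0}^{L} v_\ell/N_\ell$, and sum to obtain the cost $\varepsilon_\eta^{-1}S_L^2$. Your explicit $\varepsilon_\eta/2$ split of the error budget, and your handling of the case $v_\ell=0$, fix constants that the paper leaves implicit when it says the bound is ``of order $\varepsilon^2$''.
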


\begin{proof}
For the single-level estimator, choosing \(b_L\lesssim\varepsilon_\eta\) and
\(N\propto \varepsilon_\eta^{-1}\) makes the right-hand side of \eqref{eq:sl-gradient-mse} in Lemma~\ref{lem:gradient-mse} of order \(\varepsilon^2\). The cost is \(N c_L\), which gives
\eqref{eq:generic-sl-cost}.

For the multilevel estimator, Lemma~\ref{lem:gradient-mse} gives \eqref{eq:ml-gradient-mse},
\[\mathbb E\Big[\big\|\GhatML_{\eta,L,\mathbf N}(y)-\nabla\Psi_\eta(y)\big\|^2\Big]\le C_{d,\eta}\Big(\frac1{N_0}+\sum_{\ell=0}^{L-1}\frac{v_{\ell+1}}{N_{\ell+1}}+b_L\Big).\]
Using the normalization \(v_0=1\), the level-difference error term can be written as $\sum_{\ell=0}^{L}\frac{v_\ell}{N_\ell}$. The choice \eqref{eq:generic-ml-sample-allocation} gives, up to constants,
$\frac{v_\ell}{N_\ell}\lesssim\varepsilon_\eta\frac{\sqrt{v_\ell c_\ell}}{S_L}$. Summing over \(\ell\) yields $\sum_{\ell=0}^{L}\frac{v_\ell}{N_\ell}\lesssim\varepsilon_\eta$. Together with \(b_L\lesssim\varepsilon_\eta\), this proves the MSE bound. It remains to estimate the cost. Inserting \eqref{eq:generic-ml-sample-allocation}, we obtain
\[\sum_{\ell=0}^{L}N_\ell c_\ell\lesssim\varepsilon_\eta^{-1}S_L\sum_{\ell=0}^{L}c_\ell\sqrt{\frac{v_\ell}{c_\ell}}=\varepsilon_\eta^{-1}S_L\sum_{\ell=0}^{L}\sqrt{v_\ell c_\ell}=\varepsilon_\eta^{-1}S_L^2.
\]
\end{proof}

The sample allocation in the above theorem is obtained from the standard continuous relaxation of the MLMC allocation problem, in which the sample sizes $N_\ell$ are allowed to take arbitrary positive real values. While this convention is sufficient for the complexity analysis, simple deterministic rounding need not preserve the resulting asymptotic cost when the optimal allocation satisfies $N_\ell<1$ on sufficiently fine levels. The continuous allocation can instead be realized in expectation by randomizing the number of samples, for instance by drawing $K_\ell \sim \operatorname{Poisson}(N_\ell)$ and normalizing the corresponding random level sum by the intensity $N_\ell$. This avoids deterministic rounding while preserving the prescribed expected sample allocation and the corresponding expected-work complexity.


\subsection{Linearly convergent lower-level solvers}
\label{subsec:linear-solvers}
We first consider lower-level solvers with a linear convergence rate. This covers deterministic algorithms such as gradient descent or accelerated proximal-gradient methods under strong convexity and smoothness assumptions. In this setting, both the lower-level bias and the coupled level differences decay geometrically with the level.
\begin{ass}[Linear lower-level convergence]
\label{ass:linear-ll-convergence}
There exist constants \(C_b,C_v,C_c>0\) and a contraction factor
\(\rho\in(0,1)\) such that, for all \(\ell\ge0\),
\(b_\ell\le C_b\rho^\ell\) and \(v_{\ell+1}\le C_v\rho^\ell\).
Moreover, the cost of one level-\(\ell\) sample satisfies
\(c_\ell\le C_c(\ell+1)\).
\end{ass}

The cost model \(c_\ell\lesssim \ell+1\) reflects the fact that a level-\(\ell\)
sample requires \(\ell\) lower-level iterations, up to constants. The bias and finite-difference rates are stated in
terms of an abstract geometric decay, and will be verified below for concrete lower-level algorithms.
\begin{theorem}[Complexity under linear lower-level convergence]
\label{thm:linear-ll-complexity}
Suppose that Assumptions~\ref{ass:meas-integrability}, \ref{ass:lipschitz-loss}, \ref{ass:lipschitz-upper}, \ref{ass:fd-moment-bounds}, and~\ref{ass:linear-ll-convergence} hold. Fix \(\eta>0\) and
\(\varepsilon>0\). Then the single-level estimator can be chosen such that $\mathbb E[\|\GhatSL_{\eta,L,N}(y)-\nabla\Psi_\eta(y)\|^2]\le\varepsilon^2$ with cost
\begin{equation}\label{eq:linear-sl-cost}
\operatorname{Cost}_{\rm SL}\lesssim\varepsilon_\eta^{-1}\log(\varepsilon_\eta^{-1}).
\end{equation}
Similarly, the multilevel estimator can be chosen such that $\mathbb E[\|\GhatML_{\eta,L,\mathbf N}(y)-\nabla\Psi_\eta(y)\|^2]\le\varepsilon^2$ with cost
\begin{equation}\label{eq:linear-ml-cost}
\operatorname{Cost}_{\rm ML}\lesssim\varepsilon_\eta^{-1}.
\end{equation}
\end{theorem}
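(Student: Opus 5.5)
The plan is to derive both bounds directly from Theorem~\ref{thm:generic-estimator-complexity}, inserting the geometric rates of Assumption~\ref{ass:linear-ll-convergence}. Assumption~\ref{ass:abstract-ll-rates} holds automatically: $b_\ell\le C_b\rho^\ell\to0$, all $b_\ell,v_\ell$ are finite, and the costs $c_\ell$ are positive. So the whole task reduces to choosing the finest level $L$ and then evaluating the two cost expressions \eqref{eq:generic-sl-cost} and \eqref{eq:generic-ml-cost-with-ceiling}.

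\emph{Choice of $L$.} We need $b_L\lesssim\varepsilon_\eta$. Since $b_L\le C_b\rho^L$, it suffices to take
\[
L:=\Big\lceil \frac{\log(C_b\varepsilon_\eta^{-1})}{\log(1/\rho)}\Big\rceil,
\]
which gives $L+1\lesssim 1+\log(\varepsilon_\eta^{-1})$. We may assume $\varepsilon_\eta$ is small, say $\varepsilon_\eta\le e^{-1}$, so that $L+1\lesssim\log(\varepsilon_\eta^{-1})$. This is the regime of interest; for larger $\varepsilon_\eta$ both costs are $O(1)$ with constants absorbed.

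\emph{Single-level cost.} Part (i) of Theorem~\ref{thm:generic-estimator-complexity} gives $\operatorname{Cost}_{\rm SL}\lesssim\varepsilon_\eta^{-1}c_L$. Using $c_L\le C_c(L+1)\lesssim\log(\varepsilon_\eta^{-1})$ yields \eqref{eq:linear-sl-cost}.

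\emph{Multilevel cost.} Part (ii) gives $\operatorname{Cost}_{\rm ML}\lesssim\varepsilon_\eta^{-1}S_L^2$, so it remains to bound $S_L=\sum_{\ell=0}^L\sqrt{v_\ell c_\ell}$ uniformly in $L$. Using $v_0=1$ and, for $\ell\ge1$, $v_\ell\le C_v\rho^{\ell-1}$ and $c_\ell\le C_c(\ell+1)$, we get
\[
S_L\le \sqrt{c_0}+\sqrt{C_vC_c/\rho}\sum_{\ell\ge1}\sqrt{\ell+1}\,\rho^{\ell/2}.
\]
The series converges because $\sqrt\rho<1$; for instance, the ratio test applies, or one can use $\sqrt{\ell+1}\le C_\delta(1+\delta)^{\ell}$ with $(1+\delta)\sqrt\rho<1$. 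Hence $S_L\le C(\rho,C_v,C_c,c_0)$ independently of $L$, and therefore $\operatorname{Cost}_{\rm ML}\lesssim\varepsilon_\eta^{-1}$, which is \eqref{eq:linear-ml-cost}. The corresponding allocation is $N_\ell\propto\varepsilon_\eta^{-1}S_L\sqrt{v_\ell/c_\ell}$.

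\emph{Main obstacle.} There is no real technical difficulty beyond checking that $\sum_\ell\sqrt{(\ell+1)\rho^\ell}$ is summable, which is exactly where the logarithmic factor of the single-level method is removed. Beyond that, one only has to be careful with the index shift between $v_{\ell+1}\le C_v\rho^\ell$ and the $v_\ell$ appearing in $S_L$, and to note that, as in the text after Theorem~\ref{thm:generic-estimator-complexity}, the fine-level allocations $N_\ell<1$ are handled through the continuous relaxation or by Poisson randomization, so that rounding does not reintroduce the factor $L$.
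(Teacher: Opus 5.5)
Your proposal is correct and follows the paper's proof essentially step by step. You choose $L\propto\log(\varepsilon_\eta^{-1})$ from $b_L\le C_b\rho^L$, insert $c_L\lesssim L+1$ into the single-level cost, and bound $S_L=\sum_{\ell}\sqrt{v_\ell c_\ell}$ uniformly in $L$ via summability of $\sqrt{\rho^{\ell-1}(\ell+1)}$, which removes the logarithm from the multilevel cost; your extra care with the index shift, the check of Assumption~\ref{ass:abstract-ll-rates}, and the small-$\varepsilon_\eta$ regime only refine this same argument.
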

\begin{proof}
We begin with the single-level estimator. Let \(L\) be chosen such that $b_L\le C_b\rho^L\lesssim\varepsilon_\eta$. Equivalently, it is sufficient to choose \(L\propto\log\!\left(\varepsilon_\eta^{-1}\right)\). For the single-level estimator, Theorem~\ref{thm:generic-estimator-complexity} gives \(N\propto\varepsilon_\eta^{-1}\)
and \(\operatorname{Cost}_{\rm SL}= N c_L\). Since \(c_L\lesssim L+1\), we obtain
\[
\operatorname{Cost}_{\rm SL}\lesssim\varepsilon_\eta^{-1}\log(\varepsilon_\eta^{-1})\,.
\]

For the multilevel estimator, consider \eqref{eq:generic-ml-cost-with-ceiling} from Theorem~\ref{thm:generic-estimator-complexity}. For \(\ell\ge1\), by Assumption~\ref{ass:linear-ll-convergence},
\(v_\ell\lesssim\rho^{\ell-1},\ c_\ell\lesssim \ell+1\) and hence,
\(
    \sqrt{v_\ell c_\ell}\lesssim\sqrt{\rho^{\ell-1}(\ell+1)}\,.
\)
Since \(\rho\in(0,1)\), the series
\(
    \sum_{\ell=1}^{\infty}
    \sqrt{\rho^{\ell-1}(\ell+1)}
\)
converges and therefore, $\sum_{\ell=0}^{L}\sqrt{v_\ell c_\ell}\le C$ with a constant $C$ independent of \(L\), \(\eta\), and \(\varepsilon\). Thus,
\[ \operatorname{Cost}_{\rm ML} \lesssim\varepsilon_\eta^{-1}\big(\sum_{\ell=0}^{L}\sqrt{v_\ell c_\ell}\big)^2\lesssim \varepsilon_\eta^{-1}\,.
\]
\end{proof}

\paragraph{Sample allocation.}
Under Assumption~\ref{ass:linear-ll-convergence}, the optimal multilevel allocation from \eqref{eq:generic-ml-sample-allocation} satisfies, up to
constant factors,
\begin{equation}\label{eq:samples_ML_linear}
    N_0\propto\varepsilon_\eta^{-1},\quad N_\ell\propto \varepsilon_\eta^{-1}\sqrt{\frac{v_\ell}{c_\ell}}\lesssim\varepsilon_\eta^{-1}\sqrt{\frac{\rho^{\ell-1}}{\ell+1}},\quad \ell\ge1\,.
\end{equation}
Thus, many samples are taken on the cheap coarse levels, and exponentially fewer samples are taken on fine levels.

\subsubsection{Gradient descent}
\label{subsubsec:gd}
We first verify Assumption~\ref{ass:linear-ll-convergence} for gradient descent applied to \eqref{eq:LLP}. For fixed \(y\in\mathbb R^d\) and \(D\in\mathcal D\), write the lower-level objective as
\[F_{y,D}(x):=\mathcal L(Ax,D)+\mathcal S_y(x)\,.\]
In this subsection, we assume that \(x\mapsto F_{y,D}(x)\) is differentiable, \(\mu\)-strongly convex, and \(L_x\)-smooth in \(x\), with constants \(\mu>0\) and \(L_x<\infty\) that are uniform in \(y\) and \(D\). Let \(h(y,D)\) denote its unique minimizer. Starting from an initial point \(x_0\in\mathcal X\), gradient descent generates
\begin{equation}
    x_{s+1} = x_s-\alpha\nabla_x F_{y,D}(x_s), \qquad s\ge0,
\end{equation}
with step-size \(\alpha\in(0,2/L_x)\). We define the level-\(\ell\) lower-level approximation by the $\ell$-th iterate of the iterative scheme:
\[
    \widehat h_\ell(y,D):=x_\ell.
\]
We assume that one gradient step has constant cost, such that the cost of one level-\(\ell\) sample satisfies $c_\ell=\mathcal{O}(\ell+1)$.
Under the above conditions, Assumption~\ref{ass:linear-ll-convergence} holds.
\begin{prop}[GD rates]
\label{prop:gd-verification}
Assume that, uniformly in \(y\in\mathbb R^d\) and \(D\in\mathcal D\), the lower-level objective \(F_{y,D}\) is \(\mu\)-strongly convex and \(L_x\)-smooth in \(x\). Let gradient descent be run with a fixed step-size \(\alpha\in(0,2/L_x)\), and suppose that the initial error is uniformly bounded:
\(
    \sup_{y\in\mathbb R^d}\ \mathbb E[\textcolor{black}{\Lip_0(g,X)^2}\|x_0-h(y,D)\|_{\mathcal X}^2]<\infty
\).
Then there exist constants \(C_b,C_v>0\) and \(\rho\in(0,1)\) such that $b_\ell\le C_b\rho^\ell$ and  $v_{\ell+1} \le C_v\rho^\ell$ for all $\ell\ge0$.
\end{prop}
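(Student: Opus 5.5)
The plan is to prove a deterministic pathwise contraction for the gradient descent map and then transfer it to the weighted mean-square quantities $b_\ell$ and $v_{\ell+1}$. The key observation is that gradient descent is run separately for each fixed pair $(y,D)$. So every inequality can first be established pointwise in $(y,D)$, then multiplied by $\Lip_0(g,X)^2$, integrated, and finally the supremum over $y$ is taken.

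\textbf{Step 1: contraction.} Fix $(y,D)$ and set $T(x):=x-\alpha\nabla_x F_{y,D}(x)$, so that $x_{s+1}=T(x_s)$ and $h(y,D)=T(h(y,D))$. I will show that $T$ is a contraction on $\mathcal X$ with factor
\[
q:=\max\{|1-\alpha\mu|,\,|1-\alpha L_x|\}<1 .
\]
This factor is uniform in $(y,D)$, and $q<1$ exactly because $\alpha\in(0,2/L_x)$.

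To prove the contraction, I would use strong convexity and smoothness through co-coercivity. For a $\mu$-strongly convex, $L_x$-smooth function,
\[
\langle\nabla F(x)-\nabla F(z),x-z\rangle\ge \frac{\mu L_x}{\mu+L_x}\|x-z\|^2+\frac{1}{\mu+L_x}\|\nabla F(x)-\nabla F(z)\|^2 .
\]
Expanding $\|T(x)-T(z)\|^2$ and inserting this inequality gives a contraction factor strictly below one for every $\alpha\in(0,2/L_x)$.

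I expect this to be the main obstacle. The clean factor $q$ is immediate for $C^2$ functions via the Hessian spectrum. In a general Hilbert space with only differentiability, the co-coercivity route is needed, and it must produce a factor $<1$ on the whole range $\alpha\in(0,2/L_x)$. If the exact constant $q$ is hard to recover, any factor $\rho_0<1$ that depends only on $\alpha,\mu,L_x$ is enough for the statement.

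\textbf{Step 2: pathwise error bounds.} Iterating the contraction gives
\[
\|x_\ell-h(y,D)\|\le q^\ell\|x_0-h(y,D)\| .
\]
For the level differences, the triangle inequality yields
\[
\|x_{\ell+1}-x_\ell\|\le\|x_{\ell+1}-h\|+\|x_\ell-h\|\le 2q^\ell\|x_0-h\|.
\]
Alternatively, one can write $\|x_{\ell+1}-x_\ell\|=\|T^\ell(x_1)-T^\ell(x_0)\|\le q^\ell\|x_1-x_0\|$.

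\textbf{Step 3: integration.} Square the bounds from Step 2, multiply by $\Lip_0(g,X)^2$, take expectations, and take $\sup_y$. Setting
\[
M:=\sup_{y}\mathbb E\big[\Lip_0(g,X)^2\|x_0-h(y,D)\|^2\big]<\infty,
\]
this gives $b_\ell\le Mq^{2\ell}$ and $v_{\ell+1}\le 4Mq^{2\ell}$. The claim follows with $\rho=q^2$, $C_b=M$ and $C_v=4M$. Measurability of $x_\ell$ as a function of $D$ is inherited inductively from the continuity of $T$, using Assumption~\ref{ass:meas-integrability}.
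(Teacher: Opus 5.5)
Your proof is correct and follows essentially the same route as the paper. You use a uniform contraction factor $q$ for $x\mapsto x-\alpha\nabla_x F_{y,D}(x)$, iterate it pathwise, then square, weight by $\Lip_0(g,X)^2$, and take expectations and the supremum over $y$ with $\rho=q^2$. The paper simply cites the contraction property, whereas your co-coercivity computation actually derives it on all of $\alpha\in(0,2/L_x)$, by treating the cases $\alpha\le 2/(\mu+L_x)$ and $\alpha>2/(\mu+L_x)$ separately. The only cosmetic difference is that the paper bounds $v_{\ell+1}\le 2b_{\ell+1}+2b_\ell$ at the level of expectations, while you apply the triangle inequality pathwise before squaring; both give the same geometric rate (you get $C_v=4M$).
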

\begin{proof}
Under the listed assumptions, the mapping $\operatorname{Id}_{\scrX}(\cdot)-\alpha\nabla_{x}F_{y,D}(\cdot)$ is a contraction \cite{B2008}.
Hence, there exists \(q\in(0,1)\), depending only on \(\mu\), \(L_x\), and \(\alpha\), such that
\[ \|\widehat h_{\ell}(y,D)-h(y,D)\|_{\mathcal X}\le q^\ell \|x_0-h(y,D)\|_{\mathcal X}.
\]
Squaring and taking expectations gives 
\[ b_\ell = \sup_{y\in\mathbb R^d}\  \mathbb E[\textcolor{black}{\Lip_0(g,X)^2}\|\widehat h_\ell(y,D)-h(y,D)\|_{\mathcal X}^2] \le q^{2\ell} \sup_{y\in\mathbb R^d}\ \mathbb E[\textcolor{black}{\Lip_0(g,X)^2}\|x_0-h(y,D)\|_{\mathcal X}^2]\,.\]
Thus, \(b_\ell\le C_b\rho^\ell\) holds with \(\rho=q^2\). For the level differences, we use
\[ \widehat h_{\ell+1}(y,D)-\widehat h_\ell(y,D) = (\widehat h_{\ell+1}(y,D)-h(y,D))+(h(y,D)-\widehat h_\ell(y,D)),
\]
to deduce
\[
    \|\widehat h_{\ell+1}(y,D)-\widehat h_\ell(y,D)\|_{\mathcal X}^2 \le 2\|\widehat h_{\ell+1}(y,D)-h(y,D)\|_{\mathcal X}^2 + 2\|\widehat h_\ell(y,D)-h(y,D)\|_{\mathcal X}^2\,.
\]
Taking expectations and using the bias estimate yields
\[
    v_{\ell+1} \le 2b_{\ell+1}+2b_\ell \le C_v\rho^\ell\,.
\]
\end{proof}

\subsubsection{FISTA}
\label{subsubsec:fista}

Let $\mathcal{X}$ be a finite dimensional euclidean vector space. Given a hyperparameter $y\in\R^{d}$ and observation $D\in\mathcal{D}$, suppose that the lower-level objective is of the form
\[
    F_{y,D}(x):=f_{y,D}(x)+\scrS_y(x),\quad x\in\mathcal X. 
\]
\begin{ass}[Convex composite LLP]\label{ass:FISTA}
We assume that the following conditions hold uniformly in $y\in\R^d$ and $D\in\mathcal D$: 
\begin{itemize}
    \item $\scrS_{y}:\mathcal{X}\to(-\infty,\infty]$ is closed convex and proper. 
    \item $f_{y,D}:\mathcal{X}\to\R$ is $\beta$-Lipschitz smooth (i.e. $\Lip_{1}(f_{y,D})=\beta$), and $\sigma$-strongly convex. 
    \item For all pairs $(y,D)$, there exists a unique solution $h(y,D)=\argmin F_{y,D}(x)$. 
\end{itemize}
\end{ass}
Under these assumptions we can solve this problem using the smooth and strongly convex adaptation of the standard FISTA algorithm \cite{BeckFOM}. Let $x_{0}$ be a given initialization. Set $z_{0}=x_{0}$. Let $\kappa:=\beta/\sigma$ be the condition number of $f_{y,D}$. For $s=0,1,\ldots,$ execute the following steps:
\begin{equation}\label{eq:FISTA:iterates}
\begin{split}
    x_{s+1}=\operatorname{prox}_{\frac{1}{\beta}\scrS_y}(z_{s}-\frac{1}{\beta}\nabla f_{y,D}(z_{s})),\\ 
    z_{s+1}=x_{s+1}+(\frac{\sqrt{\kappa}-1}{\sqrt{\kappa}+1})(x_{s+1}-x_{s})
\end{split}
\end{equation}
The following linear rate of convergence of the objective function values can be proven \cite{BeckFOM}: 
\begin{equation}
    F_{y,D}(x_{s})-F_{y,D}(h(y,D))\leq\left(1-\frac{1}{\sqrt{\kappa}}\right)^{s}\left[F_{y,D}(x_{0})-F_{y,D}(h(y,D))+\frac{\sigma}{2}\norm{x_{0}-h(y,D)}^{2}\right]
\end{equation}
Using the strong convexity error bound 
$$
F_{y,D}(x_{s})-F_{y,D}(h(y,D))\geq \frac{\sigma}{2}\norm{x_{s}-h(y,D)}^{2}
$$
the linear rate on the iterates follows. Set $\hat{h}_{\ell}(y,D)=x_{\ell}$. The linear rate established above gives 
\begin{align}
    \norm{\hat{h}_{\ell}(y,D)-h(y,D)}^{2}_{\mathcal{X}}&\leq (1-\frac{1}{\sqrt{\kappa}})^{\ell}\left[\frac{2}{\sigma}(F_{y,D}(x_{0})-F_{y,D}(h(y,D)))+\norm{x_{0}-h(y,D)}^{2}_{\mathcal{X}}\right] \notag \\
&\leq \rho^{\ell}C(x_{0},y,D) \label{eq:fista-iterate-rate}
\end{align}
where $\rho:=1-\frac{1}{\sqrt{\kappa}}$. 
We assume that one proximal-gradient step has constant cost such that the cost of one level-$\ell$ sample satisfies $c_\ell = \mathcal \mathcal{O}(\ell+1)$. Under the above conditions, Assumption~\ref{ass:linear-ll-convergence} holds.
\begin{prop}[Verification for FISTA]
\label{prop:fista-verification}
Suppose that Assumption~\ref{ass:FISTA} is in place. Let FISTA be run with \eqref{eq:FISTA:iterates} and $x_0\in\mathcal X$, and assume that $C_b(x_0):=\sup_{y\in\mathbb R^d}\mathbb E[\textcolor{black}{\Lip_0(g,X)^2}C(x_0,y,D)]<\infty$. Then with $\rho=1-\kappa^{-1/2}$ it holds that $b_\ell \le C_b(x_0) \rho^\ell$ and $v_{\ell+1}\le C_v\rho^\ell$ for all $\ell\ge0$ where $C_v=2C_b(x_0)(1+\rho)$. 
\end{prop}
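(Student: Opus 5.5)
The plan is to mirror the proof of Proposition~\ref{prop:gd-verification}, using the iterate rate \eqref{eq:fista-iterate-rate} in place of the contraction estimate. The bias bound and the level-difference bound are handled separately, with the constant $C_v=2C_b(x_0)(1+\rho)$ falling out of the triangle inequality.

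\emph{Bias.} Fix $y\in\mathbb R^d$. Multiply \eqref{eq:fista-iterate-rate} pathwise by $\Lip_0(g,X)^2$ and take expectations over $Z=(X,D)$. Since FISTA is deterministic, no additional randomness is involved. This gives
\[
\mathbb E\big[\Lip_0(g,X)^2\|\widehat h_\ell(y,D)-h(y,D)\|_{\mathcal X}^2\big]\le \rho^\ell\,\mathbb E\big[\Lip_0(g,X)^2 C(x_0,y,D)\big].
\]
Taking the supremum over $y$ and using the definition of $C_b(x_0)$ yields $b_\ell\le C_b(x_0)\rho^\ell$. The only measurability point is that $d\mapsto \widehat h_\ell(y,d)$ is a finite composition of measurable maps, namely prox and gradient steps. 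Measurability is in any case already included in Assumption~\ref{ass:meas-integrability}.

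\emph{Level differences.} Insert $h(y,D)$ and use $\|a+b\|^2\le 2\|a\|^2+2\|b\|^2$ pathwise:
\[
\|\widehat h_{\ell+1}-\widehat h_\ell\|^2\le 2\|\widehat h_{\ell+1}-h\|^2+2\|\widehat h_\ell-h\|^2.
\]
Multiplying by $\Lip_0(g,X)^2$, taking expectations, and taking the supremum over $y$ (the supremum of a sum is at most the sum of suprema) gives
\[
v_{\ell+1}\le 2b_{\ell+1}+2b_\ell\le 2C_b(x_0)(\rho^{\ell+1}+\rho^\ell)=2C_b(x_0)(1+\rho)\rho^\ell .
\]
This is exactly the stated $C_v$.

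The main obstacle is checking that \eqref{eq:fista-iterate-rate} really holds with the squared iterate error at rate $\rho^\ell$ and a constant $C(x_0,y,D)$ that is integrable against $\Lip_0(g,X)^2$ uniformly in $y$. The first part follows from the function-value rate of \cite{BeckFOM} combined with the quadratic growth inequality $F_{y,D}(x)-F_{y,D}(h)\ge\frac{\sigma}{2}\|x-h\|^2$, which is valid since $F_{y,D}$ is $\sigma$-strongly convex as the sum of a $\sigma$-strongly convex $f_{y,D}$ and a convex $\scrS_y$, with $h$ its minimizer. Dividing by $\sigma/2$ produces the displayed constant. The uniform integrability is precisely the hypothesis $C_b(x_0)<\infty$, so it is assumed rather than proved. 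Finally, $\rho\in(0,1)$ requires $\kappa>1$; in the degenerate case $\kappa=1$ one gets $\rho=0$ and the bounds hold trivially.
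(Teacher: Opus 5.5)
Your proposal is correct and matches the paper's proof: multiply \eqref{eq:fista-iterate-rate} by $\Lip_0(g,X)^2$, take expectations and the supremum over $y$ to get $b_\ell\le C_b(x_0)\rho^\ell$, then use $\|a+b\|^2\le 2\|a\|^2+2\|b\|^2$ to obtain $v_{\ell+1}\le 2b_{\ell+1}+2b_\ell\le 2C_b(x_0)(1+\rho)\rho^\ell$. Your extra remarks on quadratic growth and on the degenerate case $\kappa=1$ are correct but go beyond what the paper needs, since it derives the iterate rate before stating the proposition.
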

\begin{proof}
The bound $b_\ell$ follows from
\eqref{eq:fista-iterate-rate} after taking expectations and then the supremum
over \(y\in\mathbb R^d\). For the level differences, we again use the triangle inequality in squared form:
\[
    \|x_{\ell+1}-x_\ell\|_{\mathcal X}^2
    \le
    2\|x_{\ell+1}-h(y,D)\|_{\mathcal X}^2
    +
    2\|x_\ell-h(y,D)\|_{\mathcal X}^2.
\]
Multiply by $\textcolor{black}{\Lip_0(g,X)^2}$, taking expectations and using $b_\ell \le C_b(x_0)\rho^\ell$ gives
\[
    v_{\ell+1}
    \le
    2b_{\ell+1}+2b_\ell
    \le
    2C_b(x_0)(\rho^{\ell+1}+\rho^\ell)
    =
    2C_b(x_0)(1+\rho)\rho^\ell.
\]
\end{proof}

\subsection{Sublinearly convergent stochastic lower-level solvers}
\label{subsec:sublinear-solvers}
We now consider lower-level solvers whose convergence is only
polynomial. This setting covers stochastic gradient methods under standard strong convexity and certain noise conditions such as bounded-variance or the more general ABC-condition \cite{B2018,DMS23}. In contrast to the linearly convergent case, the multilevel complexity depends sensitively on the decay rate of the coupled level differences. For the consecutive solver hierarchies considered below, this is the regime in which the distinction between final-iterate SGD and Polyak--Ruppert averaged SGD becomes essential.

\begin{ass}[Polynomial lower-level convergence]
\label{ass:polynomial-ll-convergence}
There exist constants \(C_b,C_v,C_c>0\), exponents \(r>0\), \(p>0\), and a cost exponent \(q\ge0\) such that, for all \(\ell\ge0\), \(b_\ell\le C_b(\ell+1)^{-r}\), and \(v_{\ell+1}\le C_v(\ell+1)^{-p}\).
Moreover, the cost of one level-\(\ell\) sample satisfies \(c_\ell\le C_c(\ell+1)^q.\)
\end{ass}
The exponent \(r\) controls the lower-level bias, while \(p\) controls the variance decay of the multilevel corrections. The cost exponent \(q\) describes the growth of the cost of one level sample. For standard iterative solvers where level \(\ell\) corresponds to \(\ell\) lower-level iterations, the natural choice is \(q=1\).

\begin{theorem}[Complexity under polynomial lower-level convergence]
\label{thm:polynomial-ll-complexity}
Suppose that Assumptions~\ref{ass:meas-integrability}, \ref{ass:lipschitz-loss}, \ref{ass:lipschitz-upper}, \ref{ass:fd-moment-bounds}, and~\ref{ass:polynomial-ll-convergence} hold. Fix \(\eta>0\) and
\(\varepsilon>0\), and set \(\varepsilon_\eta := C_{d,\eta}^{-1}\varepsilon^2\). Then the single-level estimator can be chosen such that
\(\mathbb E[\|\GhatSL_{\eta,L,N}(y)-\nabla\Psi_\eta(y)\|^2]\le\varepsilon^2\) with cost
\begin{equation}\label{eq:polynomial-sl-cost}
    \operatorname{Cost}_{\rm SL} = \mathcal{O}\big(\varepsilon_\eta^{-1-q/r}\big)\,.
\end{equation}
For the multilevel estimator, choose \(L+1 \propto \varepsilon_\eta^{-1/r}\). Then the multilevel estimator can be chosen such that
\(\mathbb E[\|\GhatML_{\eta,L,\mathbf N}(y)-\nabla\Psi_\eta(y)\|^2]\le\varepsilon^2\)
with cost
\begin{equation}\label{eq:polynomial-ml-cost-general}
\operatorname{Cost}_{\rm ML}= \mathcal{O}\Big(\varepsilon_\eta^{-1}\Big(\sum_{\ell=1}^{L+1}\ell^{(q-p)/2}\Big)^2\Big)\,.
\end{equation}
Consequently,
\begin{equation}\label{eq:polynomial-ml-cost-regimes}
    \operatorname{Cost}_{\rm ML} =
    \begin{cases}
    \mathcal{O}(\varepsilon_\eta^{-1}),
    & p>q+2,\\[0.4em]
    \mathcal{O}(\varepsilon_\eta^{-1}\log^2(\varepsilon_\eta^{-1})),
    & p=q+2,\\[0.4em]
    \mathcal{O}\!\left(\varepsilon_\eta^{-1-(q+2-p)/r}\right),
    & p<q+2.
    \end{cases}
\end{equation}
\end{theorem}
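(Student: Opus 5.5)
The plan is to reduce everything to Theorem~\ref{thm:generic-estimator-complexity}, mirroring the proof of Theorem~\ref{thm:linear-ll-complexity}. The first step is to fix the finest level. By Assumption~\ref{ass:polynomial-ll-convergence} we have $b_L\le C_b(L+1)^{-r}$, so choosing $L+1\propto\varepsilon_\eta^{-1/r}$ (rounded up) gives $b_L\lesssim\varepsilon_\eta$. This is exactly the hypothesis required in both parts (i) and (ii) of Theorem~\ref{thm:generic-estimator-complexity}.

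For the single-level estimator, part (i) gives $\operatorname{Cost}_{\rm SL}\lesssim\varepsilon_\eta^{-1}c_L$. Inserting $c_L\le C_c(L+1)^q\lesssim\varepsilon_\eta^{-q/r}$ then yields \eqref{eq:polynomial-sl-cost}.

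For the multilevel estimator, part (ii) gives $\operatorname{Cost}_{\rm ML}\lesssim\varepsilon_\eta^{-1}S_L^2$ with $S_L=\sum_{\ell=0}^L\sqrt{v_\ell c_\ell}$. The $\ell=0$ term is $\sqrt{c_0}$, a constant, because $v_0=1$. For $\ell\ge1$ the assumption gives $v_\ell\le C_v\ell^{-p}$ and $c_\ell\le C_c(\ell+1)^q\le 2^qC_c\,\ell^q$. Hence $\sqrt{v_\ell c_\ell}\lesssim \ell^{(q-p)/2}$, and
\[
S_L\lesssim 1+\sum_{\ell=1}^{L}\ell^{(q-p)/2}\lesssim\sum_{\ell=1}^{L+1}\ell^{(q-p)/2}.
\]
The last inequality holds because the $\ell=1$ summand equals $1$, which absorbs the constant. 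This proves \eqref{eq:polynomial-ml-cost-general}.

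The regimes in \eqref{eq:polynomial-ml-cost-regimes} follow from standard comparison of sums with integrals, applied to $\sum_{\ell=1}^{M}\ell^{-s}$ with $s=(p-q)/2$ and $M=L+1$.
\begin{itemize}
\item If $p>q+2$, then $s>1$. The series converges, so $S_L=\mathcal O(1)$ and $\operatorname{Cost}_{\rm ML}=\mathcal O(\varepsilon_\eta^{-1})$.
\item If $p=q+2$, then $s=1$. The sum is $\mathcal O(\log M)$, and $\log M\lesssim r^{-1}\log\varepsilon_\eta^{-1}$, which gives the $\log^2$ factor.
\item If $p<q+2$, then $s<1$. The sum is $\mathcal O(M^{1-s})=\mathcal O(M^{(q+2-p)/2})$, so $S_L^2=\mathcal O(M^{q+2-p})=\mathcal O(\varepsilon_\eta^{-(q+2-p)/r})$.
\end{itemize}

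I do not expect a genuine obstacle here; the argument is bookkeeping. The only points needing care are the following.
\begin{itemize}
\item The index shift between $v_{\ell+1}\le C_v(\ell+1)^{-p}$ and $v_\ell$ in $S_L$.
\item The $(\ell+1)^q$ versus $\ell^q$ comparison.
\item In the logarithmic case, ensuring $\varepsilon_\eta$ is small enough (say $\varepsilon_\eta\le 1/2$) so that $\log M\ge\log 2>0$ dominates the additive constants.
\item Recalling that the continuous relaxation of $N_\ell$ is used, so rounding does not affect the bound.
\end{itemize}
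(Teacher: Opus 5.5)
Your proposal is correct and follows the paper's proof essentially step for step. You choose $L+1\propto\varepsilon_\eta^{-1/r}$ to meet the bias condition, insert this into both parts of Theorem~\ref{thm:generic-estimator-complexity}, bound $\sqrt{v_\ell c_\ell}\lesssim \ell^{(q-p)/2}$ for $\ell\ge1$, and evaluate the resulting power sum in the three regimes; your explicit handling of the $\ell=0$ term (absorbed into the $\ell=1$ summand) and of the index shift is somewhat more careful than the paper, which passes over these points silently.
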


\begin{proof}
The single-level estimate follows directly from
Theorem~\ref{thm:generic-estimator-complexity}. The bias condition \(b_L\lesssim\varepsilon_\eta\) is satisfied by choosing
\(L+1\propto\varepsilon_\eta^{-1/r}\). With \(N\propto\varepsilon_\eta^{-1}\) and \(c_L\lesssim(L+1)^q\), the single-level cost is
\[
    \operatorname{Cost}_{\rm SL}= N c_L \lesssim
    \varepsilon_\eta^{-1}(\varepsilon_\eta^{-1/r})^q=
    \varepsilon_\eta^{-1-q/r}.
\]
For the multilevel estimator, Theorem~\ref{thm:generic-estimator-complexity} gives
\[\operatorname{Cost}_{\rm ML} \lesssim \varepsilon_\eta^{-1}\Big(\sum_{\ell=0}^{L}\sqrt{v_\ell c_\ell}\Big)^2\,. 
\]
Using \(v_\ell\lesssim \ell^{-p}\) for \(\ell\ge1\) and
\(c_\ell\lesssim \ell^q\), we obtain
\(\sqrt{v_\ell c_\ell}\lesssim\ell^{(q-p)/2},\ \ell\ge1\).
Hence,
\[
    \operatorname{Cost}_{\rm ML}
    =
    \mathcal{O}\Big(
        \varepsilon_\eta^{-1}
        \Big(
            \sum_{\ell=1}^{L+1}
            \ell^{(q-p)/2}
        \Big)^2
    \Big)\,.
\]
This proves \eqref{eq:polynomial-ml-cost-general}. It remains to estimate the sum $\sum_{\ell=1}^{L+1}\ell^{(q-p)/2}$. If \(p>q+2\), then \((q-p)/2<-1\), and the sum is uniformly bounded in \(L\).
If \(p=q+2\), then the sum is of order \(\log(L+1)\). If \(p<q+2\), then
\[
    \sum_{\ell=1}^{L+1}\ell^{(q-p)/2}
    =
    \mathcal{O}\!\left(
        (L+1)^{(q+2-p)/2}
    \right).
\]
Using \(L+1\propto\varepsilon_\eta^{-1/r}\) gives the three regimes in \eqref{eq:polynomial-ml-cost-regimes}.
\end{proof}

This theorem shows that the bias rate \(r\) alone does not determine whether MLMC improves the complexity. The decisive
quantity is the comparison between the level-difference exponent \(p\) and the cost growth exponent \(q\). In the natural iterative cost model \(q=1\), a multilevel improvement requires \(p>2\), the logarithmic borderline is \(p=3\), and the canonical \(\mathcal{O}(\varepsilon_\eta^{-1})\) MLMC cost is obtained for \(p>3\).

\paragraph{Sample allocation.}
Under Assumption~\ref{ass:polynomial-ll-convergence}, the optimal multilevel
allocation from \eqref{eq:generic-ml-sample-allocation} takes the form, up to constant factors,
\[
    N_\ell\propto \varepsilon_\eta^{-1} S_L (\ell+1)^{-(p+q)/2}, \qquad \ell\ge0.
\]
Thus, the number of samples decreases polynomially with the level. The decay
rate is faster when the level-difference error decays rapidly, i.e. when \(p\) is large, and when fine levels are expensive, i.e. when \(q\) is large. The normalizing factor \(S_L\) determines the overall scale of the allocation and satisfies
\[ S_L \lesssim 1+\sum_{\ell=1}^{L}\ell^{(q-p)/2} =
    \begin{cases}
        \mathcal{O}(1), & p>q+2,\\[0.3em]
        \mathcal{O}(\log L), & p=q+2,\\[0.3em]
        \mathcal{O}(L^{(q+2-p)/2}), & p<q+2.
    \end{cases}
\]
\subsubsection{Final-iterate SGD}
\label{subsubsec:final-iterate-sgd}
We first consider the standard final-iterate stochastic gradient descent. For fixed \(y\in\mathbb R^d\) and \(D\in\mathcal D\), suppose that the lower-level objective admits the stochastic representation
\[
    F_{y,D}(x) := \mathbb E_{\xi\sim\mathbb Q_{y,D}}\big[f_{y,D}(x,\xi)\big],
\]
where $f_{y,D}:\mathcal X\times \Xi\to\R$ and \(\xi:\Omega\to\Xi\) is a random variable that describes the auxiliary randomness used by the stochastic lower-level solver. In this subsection, we again assume that \(F_{y,D}\) is differentiable, \(\mu\)-strongly convex, and \(L_x\)-smooth in \(x\), with constants \(\mu>0\) and \(L_x<\infty\) that are uniform in \(y\) and \(D\).
Here, $(\Xi,\Sigma)$ is some abstract measurable space. We assume that, conditional on \(y\) and \(D\), i.i.d.~samples $(\xi_i)_{i\in\mathbb N}$ with $\xi_1\sim \mathbb Q_{y,D}$ can be generated. The loss function $f_{y,D}$ and distribution \(\mathbb Q_{y,D}\) may depend on the parameter \(y\) and the data \(D\), but the samples $(\xi_i)_{i\in\mathbb N}$ are conditionally independent given \(y\) and \(D\). The exact lower-level solution is
\[ h(y,D) = \argmin_{x\in\mathcal X}F_{y,D}(x)\,.\]
The SGD algorithm considered generates iterations
\begin{equation}\label{eq:SGD_iterates} 
x_{k+1} = x_k -\alpha_k \nabla_x f_{y,D}(x_k,\xi_{k+1}),\qquad k\ge0,
\end{equation}
with diminishing step-sizes $\alpha_k\asymp(k+1)^{-1}$. We consider the natural consecutive hierarchy, in which the level-$\ell$ approximation is the final iterate $\widehat h_\ell(y,D):=x_\ell$.
The following proposition records the rates needed for the abstract complexity result under Assumption~\ref{ass:polynomial-ll-convergence}. The required assumption of convergence with rate \eqref{eq:fi-sgd-bias} can be guaranteed under the imposed smoothness and strong convexity of $F_{y,D}$ uniformly over $y$ and $D$ and together with standard stochastic gradient noise assumptions \cite{B2018}.

\begin{prop}[Final-iterate SGD rates]
\label{prop:fi-sgd-rates}
Assume that, uniformly in \(y\in\mathbb R^d\) and \(D\in\mathcal D\), there exists some $C>0$ such that the SGD iterates satisfy
\begin{equation}\label{eq:fi-sgd-bias}
\mathbb E[\|x_\ell-h(y,D)\|_{\mathcal X}^2\mid y,X,D]\le C(\ell+1)^{-1}, \qquad \ell\ge0,
\end{equation}
and that the stochastic gradients have uniformly bounded second moments:
\begin{equation}\label{eq:sgd-gradient-second-moment}
    \mathbb E[\|\nabla_x f_{y,D}(x_\ell,\xi_{\ell+1})\|_{\mathcal X}^2\mid y,X,D]\le C,\qquad \ell\ge0.
\end{equation}
Then, assuming $\mathbb E[\textcolor{black}{\Lip_0(g,X)^2}]<\infty$, the final-iterate SGD satisfies Assumption~\ref{ass:polynomial-ll-convergence} with $r=1,\ p=2$ and $q=1$.
\end{prop}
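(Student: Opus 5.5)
The proof checks the three parts of Assumption~\ref{ass:polynomial-ll-convergence} one at a time. The common device is to condition on $(y,X,D)$. This makes $\Lip_0(g,X)^2$ a constant factor in each conditional expectation, so the uniform conditional bounds \eqref{eq:fi-sgd-bias} and \eqref{eq:sgd-gradient-second-moment} can be multiplied by $\Lip_0(g,X)^2$ and the outer expectation over $(X,D)$ taken afterwards. Because the constant $C$ is uniform in $(y,D)$, the supremum over $y\in\mathbb R^d$ in \eqref{eq:lower-level-bias}--\eqref{eq:level-difference-error} costs nothing.

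\emph{Bias ($r=1$).} By the tower property and \eqref{eq:fi-sgd-bias},
\[
b_\ell=\sup_{y}\mathbb E\big[\Lip_0(g,X)^2\,\mathbb E[\|x_\ell-h(y,D)\|^2_{\mathcal X}\mid y,X,D]\big]\le C\,\mathbb E[\Lip_0(g,X)^2]\,(\ell+1)^{-1}.
\]
This gives $r=1$ with $C_b=C\,\mathbb E[\Lip_0(g,X)^2]$.

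\emph{Level differences ($p=2$).} The triangle-inequality argument used for GD and FISTA is not enough here: it only yields $v_{\ell+1}\lesssim b_\ell\sim(\ell+1)^{-1}$, i.e.\ $p=1$. Instead I use the coupling, namely that both levels come from the same trajectory. By \eqref{eq:SGD_iterates}, $x_{\ell+1}-x_\ell=-\alpha_\ell\nabla_x f_{y,D}(x_\ell,\xi_{\ell+1})$. Hence
\[
\|\widehat h_{\ell+1}-\widehat h_\ell\|^2_{\mathcal X}=\alpha_\ell^2\|\nabla_x f_{y,D}(x_\ell,\xi_{\ell+1})\|^2_{\mathcal X}.
\]
With $\alpha_\ell\le \bar\alpha(\ell+1)^{-1}$ (from $\alpha_k\asymp(k+1)^{-1}$) and \eqref{eq:sgd-gradient-second-moment}, the conditional expectation is at most $\bar\alpha^2C(\ell+1)^{-2}$. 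Multiplying by $\Lip_0(g,X)^2$ and taking expectations and the supremum gives
\[
v_{\ell+1}\le \bar\alpha^2C\,\mathbb E[\Lip_0(g,X)^2](\ell+1)^{-2},
\]
so $p=2$.

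\emph{Cost ($q=1$).} A level-$\ell$ sample needs $\ell$ stochastic gradient steps, each of constant cost, evaluated at the two points $y$ and $y+\eta U$. The coupled correction reads off $x_\ell$ and $x_{\ell+1}$ from a single trajectory. Hence $c_\ell\le C_c(\ell+1)$.

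The only real point to watch is the level-difference step. The gain from $p=1$ to $p=2$ comes entirely from using the explicit one-step update rather than the bias bound, and it relies on the second-moment bound \eqref{eq:sgd-gradient-second-moment} holding uniformly along the iterates.
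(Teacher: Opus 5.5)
Your proposal is correct and follows the paper's proof essentially step for step. You bound the bias via the tower property conditional on $(y,X,D)$, bound the level differences via the one-step identity $x_{\ell+1}-x_\ell=-\alpha_\ell\nabla_x f_{y,D}(x_\ell,\xi_{\ell+1})$ together with the uniform second-moment bound and $\alpha_\ell\asymp(\ell+1)^{-1}$, and obtain $q=1$ by counting $\ell$ stochastic-gradient steps. Your extra remark is also right: the triangle-inequality route used for GD and FISTA would only give $p=1$, which is why the explicit update is needed here.
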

\begin{proof}
The bias estimate in Assumption~\ref{ass:polynomial-ll-convergence} with \(r=1\) follows directly from \eqref{eq:fi-sgd-bias}
\begin{align*}
\mathbb E[\Lip_0(g,X)^2\|x_\ell-h(y,D)\|_{\mathcal X}^2] &= \mathbb E[\Lip_0(g,X)^2 \mathbb E[ \|x_\ell-h(y,D)\|_{\mathcal X}^2\mid y,X,D]] \\ &\le \mathbb E[\Lip_0(g,X)^2] C(\ell+1)^{-1}
\end{align*} 
For the level differences, note that
\[
    \mathbb E[\textcolor{black}{\Lip_0(g,X)^2}\|x_{\ell+1}-x_\ell\|_{\mathcal X}^2] = \alpha_\ell^2 \mathbb E[\textcolor{black}{\Lip_0(g,X)^2}\mathbb E[\|\nabla_x f_{y,D}(x_\ell,\xi_{\ell+1})\|_{\mathcal X}^2\mid y,X,D]] \le \mathbb E[\textcolor{black}{\Lip_0(g,X)^2}] C\alpha_\ell^2.
\]
Since \(\alpha_\ell\asymp(\ell+1)^{-1}\), we obtain $v_{\ell+1}\lesssim C(\ell+1)^{-2}$, i.e.~\(p=2\) in Assumption~\ref{ass:polynomial-ll-convergence}. Finally, computing \(x_\ell\) requires \(\ell\) stochastic-gradient
steps, so \(c_\ell=\mathcal{O}(\ell+1)\), which corresponds to \(q=1\) in Assumption~\ref{ass:polynomial-ll-convergence}.
\end{proof}

The following theorem shows that, for the consecutive final-iterate hierarchy for SGD, multilevel sampling may improve constants through better sample allocation, but it does not improve the asymptotic complexity exponent.
\begin{theorem}[Complexity for consecutive final-iterate SGD]
\label{thm:fi-sgd-complexity}
Suppose that Assumptions~\ref{ass:meas-integrability}, \ref{ass:lipschitz-loss}, \ref{ass:lipschitz-upper}, \ref{ass:fd-moment-bounds} and the assumptions of Proposition~\ref{prop:fi-sgd-rates} hold. Fix \(\eta>0\) and \(\varepsilon>0\), and set
$\varepsilon_\eta:=C_{d,\eta}^{-1}\varepsilon^2$. Then the single-level estimator achieves $\mathbb E[\|\GhatSL_{\eta,L,N}(y)-\nabla\Psi_\eta(y)\|^2]\le\varepsilon^2$ with cost $\operatorname{Cost}_{\rm SL}=\mathcal{O}(\varepsilon_\eta^{-2})$. Similarly, the multilevel estimator achieves the same accuracy with cost $\operatorname{Cost}_{\rm ML}= \mathcal{O}(\varepsilon_\eta^{-2})$.
\end{theorem}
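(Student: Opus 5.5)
The proof is a direct specialization of Theorem~\ref{thm:polynomial-ll-complexity}, with the exponents supplied by Proposition~\ref{prop:fi-sgd-rates}. First, under the assumptions of Proposition~\ref{prop:fi-sgd-rates}, Assumption~\ref{ass:polynomial-ll-convergence} holds with $r=1$, $p=2$ and $q=1$. Assumption~\ref{ass:abstract-ll-rates} is then also satisfied: $b_\ell\lesssim(\ell+1)^{-1}\to0$, $v_\ell$ is finite, and $c_\ell>0$. So Theorem~\ref{thm:generic-estimator-complexity}, and hence Theorem~\ref{thm:polynomial-ll-complexity}, apply with all the standing Assumptions~\ref{ass:meas-integrability}--\ref{ass:fd-moment-bounds} in place.

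For the single-level estimator, I will choose $L+1\propto\varepsilon_\eta^{-1/r}=\varepsilon_\eta^{-1}$, which gives $b_L\lesssim\varepsilon_\eta$. With $N\propto\varepsilon_\eta^{-1}$, Lemma~\ref{lem:gradient-mse} yields the MSE bound $\varepsilon^2$. Then \eqref{eq:polynomial-sl-cost} gives $\operatorname{Cost}_{\rm SL}=\mathcal O(\varepsilon_\eta^{-1-q/r})=\mathcal O(\varepsilon_\eta^{-2})$.

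For the multilevel estimator, I will use the same $L$ together with the allocation \eqref{eq:generic-ml-sample-allocation}. Since $p=2<q+2=3$, we are in the third regime of \eqref{eq:polynomial-ml-cost-regimes}, which gives
\[
\operatorname{Cost}_{\rm ML}=\mathcal O\big(\varepsilon_\eta^{-1-(q+2-p)/r}\big)=\mathcal O(\varepsilon_\eta^{-2}).
\]
The same bound can be read off directly: $\sum_{\ell=1}^{L+1}\ell^{(q-p)/2}=\sum_{\ell=1}^{L+1}\ell^{-1/2}\asymp (L+1)^{1/2}\asymp\varepsilon_\eta^{-1/2}$, and squaring this and multiplying by $\varepsilon_\eta^{-1}$ gives $\varepsilon_\eta^{-2}$.

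There is no substantive obstacle. The only points needing care are (a) that the constants in Proposition~\ref{prop:fi-sgd-rates} are uniform in $y$, so that $b_\ell$ and $v_{\ell+1}$, which are defined as suprema over $y$, obey the stated rates; and (b) that the coupled corrections in \eqref{eq:ml-correction-estimator} are computed along one SGD trajectory, so that the per-sample cost is $\mathcal O(c_{\ell+1})$ and $v_{\ell+1}$ is precisely the quantity bounded by $\alpha_\ell^2$. Both are already built into the setup. The statement is only an upper bound, so its no-gain interpretation rests on the matching orders; optimality within the allocation model is left to Proposition~\ref{prop:fi-sgd-sharpness}.
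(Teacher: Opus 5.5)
Your proposal is correct and matches the paper's proof: you read off $r=1$, $p=2$, $q=1$ from Proposition~\ref{prop:fi-sgd-rates} and substitute them into Theorem~\ref{thm:polynomial-ll-complexity}. The single-level bound $\varepsilon_\eta^{-1-q/r}$ and the third multilevel regime $\varepsilon_\eta^{-1-(q+2-p)/r}$ (which applies since $p<q+2$) then both give $\mathcal{O}(\varepsilon_\eta^{-2})$.
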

\begin{proof}
By Proposition~\ref{prop:fi-sgd-rates}, final-iterate SGD satisfies Assumption~\ref{ass:polynomial-ll-convergence} with $r=1$, $p=2$ and $q=1$. The single-level cost follows from
\eqref{eq:polynomial-sl-cost}:
\[
    \operatorname{Cost}_{\rm SL} = \mathcal{O}(\varepsilon_\eta^{-1-q/r}) = \mathcal{O}(\varepsilon_\eta^{-2}).
\]
For the multilevel estimator, since \(p=2<q+2=3\), the third regime in \eqref{eq:polynomial-ml-cost-regimes} gives
\[
    \operatorname{Cost}_{\rm ML} = \mathcal{O}(\varepsilon_\eta^{-1-(q+2-p)/r}) = \mathcal{O}(\varepsilon_\eta^{-1-(1+2-2)}) =\mathcal{O}(\varepsilon_\eta^{-2})\,.
\]
\end{proof}

\paragraph{Sharpness for final-iterate SGD.} Theorem~\ref{thm:fi-sgd-complexity} shows that, for the consecutive hierarchy, final-iterate SGD does not improve the asymptotic multilevel complexity exponent under the natural iterative cost
model. We now show that this conclusion is sharp within the sufficient error-budget allocation model underlying Theorem~\ref{thm:generic-estimator-complexity}. Thus, the absence of an exponent improvement is not merely an artifact of upper-bound analysis.
\begin{prop}[Sharpness for final-iterate SGD]
\label{prop:fi-sgd-sharpness}
Consider the multilevel estimator based on consecutive lower-level approximations $\widehat h_0,\widehat h_1,\ldots,\widehat h_L$. Assume that the lower-level bias, level-difference errors, and level costs satisfy 
\begin{align*} 
c_b(\ell+1)^{-1} \le b_\ell \le C_b(\ell+1)^{-1},\quad 
c_v(\ell+1)^{-2}
    \le
    v_{\ell+1}
    \le
    C_v(\ell+1)^{-2}, \quad 
    c_c(\ell+1)
    \le
    c_\ell
    \le
    C_c(\ell+1),
\end{align*}
for all $\ell\ge0$ and some
constants \(0<c_b,C_b,c_v,C_v,c_c,C_c<\infty\). Let \(\varepsilon_\eta>0\) denote the effective estimator accuracy in the complexity analysis. Then every choice of finest level \(L\) and sample sizes
\(\mathbf N=(N_0,\ldots,N_L)\) satisfying $b_L\lesssim \varepsilon_\eta$ and $\sum_{\ell=0}^{L}\frac{v_\ell}{N_\ell}\lesssim \varepsilon_\eta$
has total cost $\operatorname{Cost}_{\rm ML} =\Omega(\varepsilon_\eta^{-2})$. 
\end{prop}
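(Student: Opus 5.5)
The plan is to lower-bound the cost of any admissible allocation by a Cauchy--Schwarz argument, the same one that makes the allocation \eqref{eq:generic-ml-sample-allocation} optimal. First, the bias constraint together with the lower bound $b_L\ge c_b(L+1)^{-1}$ gives $c_b(L+1)^{-1}\le b_L\lesssim\varepsilon_\eta$. Hence the finest level must satisfy $L+1\gtrsim\varepsilon_\eta^{-1}$, and this forced depth is what drives the whole estimate.

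Second, for any positive $N_0,\ldots,N_L$, Cauchy--Schwarz gives
\[
\Big(\sum_{\ell=0}^{L}\sqrt{v_\ell c_\ell}\Big)^2=\Big(\sum_{\ell=0}^{L}\sqrt{N_\ell c_\ell}\sqrt{\tfrac{v_\ell}{N_\ell}}\Big)^2\le\Big(\sum_{\ell=0}^{L}N_\ell c_\ell\Big)\Big(\sum_{\ell=0}^{L}\tfrac{v_\ell}{N_\ell}\Big).
\]
Combined with the variance constraint $\sum_\ell v_\ell/N_\ell\lesssim\varepsilon_\eta$, this yields
\[
\operatorname{Cost}_{\rm ML}=\sum_{\ell=0}^{L}N_\ell c_\ell\gtrsim\varepsilon_\eta^{-1}S_L^2 .
\]
This holds for every allocation, not only the one in \eqref{eq:generic-ml-sample-allocation}. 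The bound is used in the continuous relaxation. If the $N_\ell$ are required to be integers at least $1$, the cost only increases, so the lower bound still holds.

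Third, bound $S_L$ from below using the two-sided rates. For $\ell\ge1$ we have $v_\ell\ge c_v\ell^{-2}$ and $c_\ell\ge c_c(\ell+1)$. Therefore
\[
\sqrt{v_\ell c_\ell}\ge\sqrt{c_vc_c}\,\ell^{-1}(\ell+1)^{1/2}\ge\sqrt{c_vc_c}\,\ell^{-1/2}.
\]
Summing over $\ell$,
\[
S_L\ge\sqrt{c_vc_c}\sum_{\ell=1}^{L}\ell^{-1/2}\gtrsim\sqrt{L+1}-1,
\]
so $S_L\gtrsim\sqrt{L+1}$ for $L\ge1$. Combining the three steps,
\[
\operatorname{Cost}_{\rm ML}\gtrsim\varepsilon_\eta^{-1}(L+1)\gtrsim\varepsilon_\eta^{-2}.
\]

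The main obstacle is bookkeeping rather than any conceptual step. The implicit constants in ``$\lesssim\varepsilon_\eta$'' must be held fixed, independent of $\varepsilon_\eta$, so that $L+1\ge c\,\varepsilon_\eta^{-1}$ follows with a uniform constant. One also needs the regime of small $\varepsilon_\eta$, so that $L\ge1$ and the term $\sqrt{L+1}-1$ is comparable to $\sqrt{L+1}$; for large $\varepsilon_\eta$ the asymptotic statement is vacuous. Finally, note that the level-zero term, with $v_0=1$, is only used as a nonnegative contribution to $S_L$ and plays no role in the lower bound.
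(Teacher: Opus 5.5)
Your proposal is correct and follows the paper's proof essentially step for step: the bias lower bound forces $L+1\gtrsim\varepsilon_\eta^{-1}$, Cauchy--Schwarz gives $\operatorname{Cost}_{\rm ML}\gtrsim\varepsilon_\eta^{-1}S_L^2$ for every positive allocation, and $\sqrt{v_\ell c_\ell}\gtrsim\ell^{-1/2}$ yields $S_L\gtrsim\sqrt{L}$. Your additional bookkeeping, which holds the implicit constants fixed and restricts to the small-$\varepsilon_\eta$ regime where $L\ge1$, makes explicit details that the paper leaves implicit.
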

\begin{proof}
The bias constraint $b_L\lesssim \varepsilon_\eta$ implies $L+1 \gtrsim \varepsilon_\eta^{-1}$. We next lower-bound the work needed to satisfy the variance constraint. By Cauchy's inequality, for any positive sample sizes \(N_\ell\),
\[
    \Big(
        \sum_{\ell=0}^{L}\sqrt{v_\ell c_\ell}
    \Big)^2
    \le
    \Big(
        \sum_{\ell=0}^{L}\frac{v_\ell}{N_\ell}
    \Big)
    \Big(
        \sum_{\ell=0}^{L}N_\ell c_\ell
    \Big).
\]
Hence, every allocation satisfying
\(\sum_{\ell=0}^{L}v_\ell/N_\ell\lesssim\varepsilon_\eta\) must obey
\[
    \operatorname{Cost}_{\rm ML}
    =
    \sum_{\ell=0}^{L}N_\ell c_\ell
    \gtrsim
    \varepsilon_\eta^{-1}
    \Big(
        \sum_{\ell=0}^{L}\sqrt{v_\ell c_\ell}
    \Big)^2.
\]
By assumption, we have, for \(\ell\ge1\), $\sqrt{v_\ell c_\ell}\gtrsim\ell^{-1/2}$, and therefore,
\[
    \sum_{\ell=1}^{L}\sqrt{v_\ell c_\ell}
    \gtrsim
    \sum_{\ell=1}^{L}\ell^{-1/2}
    \gtrsim
    L^{1/2}.
\]
Together, this gives
\[
    \operatorname{Cost}_{\rm ML}
    \gtrsim
    \varepsilon_\eta^{-1}L\gtrsim
    \varepsilon_\eta^{-2}
\]
which proves the claim.
\end{proof}
Proposition~\ref{prop:fi-sgd-sharpness} should be interpreted as a sharpness result for the MLMC hierarchy considered here, not as a universal lower bound for all possible algorithms. It shows that if the lower-level approximations are the consecutive final iterates of SGD and the increments satisfy the natural
rate \(v_{\ell+1}\asymp(\ell+1)^{-2}\), then no choice of MLMC sample sizes can improve the exponent beyond \(\mathcal{O}(\varepsilon_\eta^{-2})\). The multilevel allocation may still improve constants in practice, but within this consecutive hierarchy, an asymptotic exponent gain requires faster decay of the coupled level differences. This is precisely what Polyak--Ruppert averaging provides: it preserves the bias rate \(b_\ell=\mathcal{O}((\ell+1)^{-1})\) while improving the level-difference decay to \(v_{\ell+1}=\mathcal{O}((\ell+1)^{-3})\), leading to an improved complexity derived below.

Note that the rates used in Proposition~\ref{prop:fi-sgd-sharpness} are natural for final-iterate SGD in the presence of non-vanishing stochastic-gradient noise. 
\begin{ex}
Consider the one-dimensional strongly convex quadratic model
\[
    F(x)=\frac{\mu}{2}x^2,
    \qquad
    \nabla f(x,\xi)=\mu x+\xi,
\]
where \(\mathbb E[\xi]=0\), \(\mathbb E[\xi^2]=\sigma_\xi^2>0\) and global minimum $x_\ast= 0$. Let $(x_\ell)$ be generated by SGD, $\mathcal F_\ell=\sigma(x_m,m\le \ell)$ denotes the canonical filtration and, for each $\ell$, $\xi_{\ell+1}$ be independent of $\mathcal F_\ell$. 
With step-sizes \(\alpha_\ell=\frac{a}{\ell+1},\ 2\mu a>1,\) the final-iterate mean-square error satisfies (cf.~\cite{JW2020})
\[
\mathbb E|x_\ell|^2\asymp(\ell+1)^{-1}.
\]
The consecutive increments satisfy
\[
    \mathbb E|x_{\ell+1}-x_\ell|^2
    =
    \alpha_\ell^2
    \mathbb E|\mu x_\ell+\xi_{\ell+1}|^2
    \ge
    \alpha_\ell^2\sigma_\xi^2
    \asymp
    (\ell+1)^{-2}.
\]
For the upper bound, note that 
\[\mathbb E|\mu x_\ell+\xi_{\ell+1}|^2 = \mu^2\mathbb E|x_\ell|^2 +\mathbb E|\xi_{\ell+1}|^2 \lesssim (\ell+1)^{-1} + \sigma_{\varepsilon} ^2\,.\] 
Thus, the decay \(v_{\ell+1}\asymp(\ell+1)^{-2}\) is not just an artifact of the proof. 
\end{ex}

Finally, we emphasize that the result of Proposition~\ref{prop:fi-sgd-sharpness} is specific to the consecutive hierarchy $\widehat h_\ell=x_\ell$. For alternative choices of the level hierarchy, the corresponding bias, level-difference, and cost rates may change, and it is not immediate whether the same complexity lower bound continues to hold. Thus, the absence of an exponent improvement established above should not be interpreted as a universal impossibility result for final-iterate SGD under arbitrary level hierarchies.

\subsubsection{Polyak--Ruppert averaged SGD}
\label{subsubsec:pr-sgd}
We now replace the final SGD iterate by the Polyak--Ruppert averaged iterate. Using the same SGD recursion from \eqref{eq:SGD_iterates}, define
\begin{equation}\label{eq:PRSGD}
    \bar x_\ell := \frac{1}{\ell+1} \sum_{k=0}^{\ell}x_k, \qquad \ell\ge0,
\end{equation}
and set $\widehat h_\ell(y,D):=\bar x_\ell$. In the following, we will abbreviate Polyak--Ruppert averaged SGD by PR-SGD. Averaging does not change the standard \(\mathcal{O}((\ell+1)^{-1})\) mean-square bias rate, but it improves the decay of consecutive level differences. This is the key property that makes multilevel sampling effective for stochastic lower-level solvers. The following lemma derives the improved level differences decay. The required assumption \eqref{eq:pr-average-rate} can be established under smoothness and strong convexity uniformly in $y$ and $D$ \cite{PJ92}.

\begin{lem}[Level-difference decay for PR--SGD]
\label{lem:pr-level-difference}
Assume that, uniformly in \(y\in\mathbb R^d\) and \(D\in\mathcal D\), there exists some $C>0$ such that
\begin{equation}\label{eq:pr-average-rate}
\mathbb E[\|x_{\ell+1}-h(y,D)\|_{\mathcal X}^2\mid y,X,D] \le C(\ell+1)^{-1} \quad \text{and}\quad   \mathbb E[\|\bar x_\ell-h(y,D)\|_{\mathcal X}^2\mid y,D]\le C(\ell+1)^{-1} 
\end{equation}
for all $\ell\ge0$. Then, assuming $\mathbb E[\textcolor{black}{\Lip_0(g,X)^2}]<\infty$, for all $\ell\ge0$,
\begin{equation}\label{eq:pr-level-difference-rate}
    \mathbb E[\textcolor{black}{\Lip_0(g,X)^2}\|\bar x_{\ell+1}-\bar x_\ell\|_{\mathcal X}^2] \le 4 \mathbb E[\textcolor{black}{\Lip_0(g,X)^2}]C(\ell+1)^{-3}\,.
\end{equation}
\end{lem}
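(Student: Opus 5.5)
The plan is to use the algebraic structure of the running average. I would write the consecutive difference of averages exactly. From $(\ell+2)\bar x_{\ell+1} = (\ell+1)\bar x_\ell + x_{\ell+1}$ we obtain
\[
\bar x_{\ell+1}-\bar x_\ell = \frac{1}{\ell+2}\big(x_{\ell+1}-\bar x_\ell\big).
\]
The increment of the average is therefore damped by the factor $(\ell+2)^{-1}$ compared with the deviation of the newest iterate from the current average. This is exactly where the extra power of $\ell$ comes from.

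Next I would center everything at $h=h(y,D)$ and write $x_{\ell+1}-\bar x_\ell = (x_{\ell+1}-h)-(\bar x_\ell-h)$. The elementary inequality $\|a-b\|^2\le 2\|a\|^2+2\|b\|^2$ then gives
\[
\|\bar x_{\ell+1}-\bar x_\ell\|_{\mathcal X}^2 \le \frac{2}{(\ell+2)^2}\Big(\|x_{\ell+1}-h\|_{\mathcal X}^2+\|\bar x_\ell-h\|_{\mathcal X}^2\Big).
\]
Taking conditional expectations given $(y,X,D)$ and applying both bounds in \eqref{eq:pr-average-rate}, the bracket is at most $2C(\ell+1)^{-1}$. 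Using $(\ell+2)^{-2}\le(\ell+1)^{-2}$, the conditional expectation of the left-hand side is bounded by $4C(\ell+1)^{-3}$.

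Finally, as in the proof of Proposition~\ref{prop:fi-sgd-rates}, I would multiply by $\Lip_0(g,X)^2$. This factor is measurable with respect to the conditioning variables, so the tower property gives
\[
\mathbb E\big[\Lip_0(g,X)^2\|\bar x_{\ell+1}-\bar x_\ell\|^2\big] = \mathbb E\big[\Lip_0(g,X)^2\,\mathbb E[\|\bar x_{\ell+1}-\bar x_\ell\|^2\mid y,X,D]\big] \le 4\,\mathbb E[\Lip_0(g,X)^2]\,C(\ell+1)^{-3},
\]
which is \eqref{eq:pr-level-difference-rate}.

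The only delicate point is the conditioning. The second bound in \eqref{eq:pr-average-rate} is stated conditionally on $(y,D)$ rather than $(y,X,D)$. I would handle this by noting that the SGD trajectory depends only on $(y,D)$ and the auxiliary noise $\xi$, which is conditionally independent of $X$ given $(y,D)$. Hence conditioning additionally on $X$ does not change that conditional expectation, and the two bounds can be combined under the same conditioning. Everything else is routine.
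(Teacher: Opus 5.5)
Your proposal is correct and follows the paper's proof. It uses the same averaging identity $\bar x_{\ell+1}-\bar x_\ell=\frac{1}{\ell+2}(x_{\ell+1}-\bar x_\ell)$, the same split around $h(y,D)$, and the same tower-property step taken from Proposition~\ref{prop:fi-sgd-rates}. You also observe that the second bound in \eqref{eq:pr-average-rate} is conditioned only on $(y,D)$, so combining it with the weight $\Lip_0(g,X)^2$ needs the solver noise to be conditionally independent of $X$ given $(y,D)$. The paper's appeal to ``similar arguments'' leaves this modeling assumption implicit, and you make it explicit.
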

\begin{proof}
By definition of the averaged iterates,
\[
    \bar x_{\ell+1} = \frac{1}{\ell+2}\sum_{k=0}^{\ell+1}x_k
 = \frac{\ell+1}{\ell+2}\bar x_\ell + \frac{1}{\ell+2}x_{\ell+1}
\]
such that $\bar x_{\ell+1}-\bar x_\ell=\frac{1}{\ell+2}(x_{\ell+1}-\bar x_\ell)\,$. 
Hence, using $\|a+b\|^2\le 2\|a\|^2+2\|b\|^2$, we obtain
\begin{align*}
    \mathbb E[\textcolor{black}{\Lip_0(g,X)^2}\|\bar x_{\ell+1}-\bar x_\ell\|^2]&=\frac{1}{(\ell+2)^2} \mathbb E [\textcolor{black}{\Lip_0(g,X)^2}\|x_{\ell+1}-\bar x_\ell\|^2] \\&\le
    \frac{2}{(\ell+2)^2}(\mathbb E[\textcolor{black}{\Lip_0(g,X)^2}\|x_{\ell+1}-h(y,D)\|^2]\\ &\qquad+\mathbb E[\textcolor{black}{\Lip_0(g,X)^2}\|\bar x_\ell-h(y,D)\|^2]) \\ &\le 4\mathbb E[\textcolor{black}{\Lip_0(g,X)^2}]C(\ell+1)^{-3}\,,
\end{align*}
where we used similar arguments as in the proof of Proposition~\ref{prop:fi-sgd-rates}.
\end{proof}
The following proposition is an immediate consequence and provides the verification of Assumption~\ref{ass:polynomial-ll-convergence}.
\begin{prop}[PR-SGD rates]
\label{prop:pr-sgd-rates}
Assume that the conditions of Lemma~\ref{lem:pr-level-difference} hold and that computing \(\bar x_\ell\) requires \(\ell\) stochastic-gradient steps. Then PR-SGD satisfies
Assumption~\ref{ass:polynomial-ll-convergence} with $r=1,\ p=3$ and $q=1$.
\end{prop}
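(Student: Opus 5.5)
The claim is that PR-SGD satisfies Assumption~\ref{ass:polynomial-ll-convergence} with $r=1$, $p=3$, $q=1$. We verify the three rates of that assumption separately: the bias rate, the level-difference rate, and the cost rate. Throughout we take $\widehat h_\ell(y,D)=\bar x_\ell$ as in \eqref{eq:PRSGD}.

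\emph{Bias ($r=1$).} We condition on $(y,X,D)$ and mirror the first display in the proof of Proposition~\ref{prop:fi-sgd-rates}. The tower property together with the second bound in \eqref{eq:pr-average-rate} gives
\[
\mathbb E[\Lip_0(g,X)^2\|\bar x_\ell-h(y,D)\|_{\mathcal X}^2]\le \mathbb E[\Lip_0(g,X)^2]\,C(\ell+1)^{-1}.
\]
The right-hand side does not depend on $y$, so taking the supremum over $y\in\mathbb R^d$ yields $b_\ell\le C_b(\ell+1)^{-1}$ with $C_b=C\,\mathbb E[\Lip_0(g,X)^2]<\infty$.

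One subtlety needs care. In \eqref{eq:pr-average-rate} the conditioning for the averaged iterate is written on $(y,D)$ only. We read it as holding conditionally on $(y,X,D)$, since the solver randomness is independent of $X$ given $D$. If it genuinely holds only given $(y,D)$, we would instead assume that $\Lip_0(g,X)$ is bounded, or invoke the conditional independence explicitly. This is the only place where anything beyond bookkeeping is required.

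\emph{Level differences ($p=3$).} This follows directly from Lemma~\ref{lem:pr-level-difference}. The bound \eqref{eq:pr-level-difference-rate} is uniform in $y$, so taking the supremum over $y$ gives
\[
v_{\ell+1}\le 4\,\mathbb E[\Lip_0(g,X)^2]\,C\,(\ell+1)^{-3},
\]
which is the required rate with $p=3$. We should also check that the coupling matches the estimator \eqref{eq:ml-correction-estimator}. Both $\bar x_\ell$ and $\bar x_{\ell+1}$ are built from the same SGD trajectory, which is exactly the coupling the lemma assumes.

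\emph{Cost ($q=1$).} Computing $\bar x_\ell$ requires $\ell$ stochastic-gradient steps, plus a running average that can be updated recursively via $\bar x_{\ell+1}=\frac{\ell+1}{\ell+2}\bar x_\ell+\frac{1}{\ell+2}x_{\ell+1}$ at $\mathcal O(1)$ extra cost per step. Each finite-difference sample evaluates this at two points ($y$ and $y+\eta U$), which only doubles the constant. Hence $c_\ell\le C_c(\ell+1)$, i.e.\ $q=1$. Finally, we collect the constants $C_b$, $C_v$, $C_c$, which completes the verification.
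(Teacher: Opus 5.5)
Your proposal is correct and matches the paper, which states this proposition without proof as an immediate consequence of Lemma~\ref{lem:pr-level-difference}: $r=1$ from the second bound in \eqref{eq:pr-average-rate} via the tower property, $p=3$ from \eqref{eq:pr-level-difference-rate}, and $q=1$ from the cost hypothesis. Your remark on conditioning on $(y,D)$ versus $(y,X,D)$ is a fair catch, since the paper's proof of Lemma~\ref{lem:pr-level-difference} silently relies on the same conditional independence of the solver randomness and $X$ given $D$.
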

The following theorem shows that Polyak--Ruppert averaging yields a multilevel improvement over the single-level complexity.
\begin{theorem}[Complexity for PR-SGD]
\label{thm:pr-sgd-complexity}
Suppose that Assumptions~\ref{ass:meas-integrability}, \ref{ass:lipschitz-loss}, \ref{ass:lipschitz-upper}, \ref{ass:fd-moment-bounds} and the assumptions of Proposition~\ref{prop:pr-sgd-rates} hold. Fix \(\eta>0\) and \(\varepsilon>0\), and set
$\varepsilon_\eta:=C_{d,\eta}^{-1}\varepsilon^2$. Then the single-level estimator achieves $\mathbb E[\|\GhatSL_{\eta,L,N}(y)-\nabla\Psi_\eta(y)\|^2]\le\varepsilon^2$ with $\operatorname{Cost}_{\rm SL}=\mathcal{O}(\varepsilon_\eta^{-2})$. The multilevel estimator achieves the same accuracy with $\operatorname{Cost}_{\rm ML} = \mathcal{O}(\varepsilon_\eta^{-1}\log^2(\varepsilon_\eta^{-1}))$.
\end{theorem}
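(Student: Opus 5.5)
The plan is to mirror the proof of Theorem~\ref{thm:fi-sgd-complexity} and substitute the PR-SGD rates. By Proposition~\ref{prop:pr-sgd-rates}, the hypotheses of Lemma~\ref{lem:pr-level-difference} together with the cost model give Assumption~\ref{ass:polynomial-ll-convergence} with $r=1$, $p=3$ and $q=1$. In more detail, the bias bound $b_\ell\lesssim(\ell+1)^{-1}$ follows from the second estimate in \eqref{eq:pr-average-rate}: multiply by $\Lip_0(g,X)^2$ and condition on $(y,X,D)$ exactly as in the proof of Proposition~\ref{prop:fi-sgd-rates}. The level-difference bound $v_{\ell+1}\lesssim(\ell+1)^{-3}$ is \eqref{eq:pr-level-difference-rate}, after taking the supremum over $y$; this is legitimate because the constant $C$ is uniform in $y$ and $D$. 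Finally, $c_\ell\lesssim \ell+1$ holds because producing $\bar x_{\ell+1}$ also produces $\bar x_\ell$ along the same trajectory, and the running average costs $\mathcal O(1)$ per step.

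With these exponents in hand, the single-level claim comes directly from \eqref{eq:polynomial-sl-cost} in Theorem~\ref{thm:polynomial-ll-complexity}:
\[
\operatorname{Cost}_{\rm SL}=\mathcal O\big(\varepsilon_\eta^{-1-q/r}\big)=\mathcal O\big(\varepsilon_\eta^{-2}\big),
\]
with $L+1\propto\varepsilon_\eta^{-1}$ and $N\propto\varepsilon_\eta^{-1}$.

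For the multilevel estimator, $p=3=q+2$, so we are in the borderline case of \eqref{eq:polynomial-ml-cost-regimes}. To make the logarithm transparent, I would go back to \eqref{eq:polynomial-ml-cost-general}. There $\sqrt{v_\ell c_\ell}\lesssim \ell^{-1}$, so $S_L\lesssim 1+\sum_{\ell=1}^{L+1}\ell^{-1}\lesssim \log(L+2)$. With $L+1\propto\varepsilon_\eta^{-1}$, this gives $S_L\lesssim\log(\varepsilon_\eta^{-1})$, and Theorem~\ref{thm:generic-estimator-complexity}(ii) then yields
\[
\operatorname{Cost}_{\rm ML}\lesssim\varepsilon_\eta^{-1}S_L^2=\mathcal O\big(\varepsilon_\eta^{-1}\log^2(\varepsilon_\eta^{-1})\big).
\]
The MSE guarantee $\le\varepsilon^2$ follows, for both estimators, from Lemma~\ref{lem:gradient-mse} combined with the allocation \eqref{eq:generic-ml-sample-allocation}.

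Most of this is bookkeeping. The one point needing care is the verification of $v_{\ell+1}$ and $c_\ell$ in the form the abstract theory requires. First, the coupling must be genuine: $\bar x_\ell$ and $\bar x_{\ell+1}$ have to come from the same SGD trajectory with shared $\xi$'s, and also at both finite-difference points $y$ and $y+\eta U$. Second, the conditional bound in Lemma~\ref{lem:pr-level-difference} must transfer to the weighted, supremum-over-$y$ quantity. I would also state explicitly that the logarithm is of $\varepsilon_\eta^{-1}$ only up to constants, since $\log(L+2)\asymp\log(\varepsilon_\eta^{-1})$ for small $\varepsilon_\eta$. The comparison with the single-level cost, $\varepsilon_\eta^{-1}\log^2(\varepsilon_\eta^{-1})\ll\varepsilon_\eta^{-2}$, then exhibits the claimed exponent gain.
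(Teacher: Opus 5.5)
Your proposal is correct and follows the paper's proof. You invoke Proposition~\ref{prop:pr-sgd-rates} to get $r=1$, $p=3$, $q=1$, read off $\operatorname{Cost}_{\rm SL}=\mathcal O(\varepsilon_\eta^{-2})$ from \eqref{eq:polynomial-sl-cost}, and use the borderline regime $p=q+2$ of Theorem~\ref{thm:polynomial-ll-complexity} for the multilevel cost; your explicit harmonic-sum bound $S_L\lesssim\log(\varepsilon_\eta^{-1})$ just unpacks what the paper leaves to the proof of that theorem.
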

\begin{proof}
By Proposition~\ref{prop:pr-sgd-rates}, PR-SGD satisfies Assumption~\ref{ass:polynomial-ll-convergence} with $r=1,\ p=3$ and $q=1$. The single-level cost follows from
\eqref{eq:polynomial-sl-cost}. For the multilevel estimator, \(p=q+2\). Therefore, the second regime in
\eqref{eq:polynomial-ml-cost-regimes} gives $\operatorname{Cost}_{ML}=\mathcal{O}(\varepsilon_\eta^{-1}\log^2(\varepsilon_\eta^{-1}))$.
\end{proof}

\subsection{Sample complexity by lower-level solver}
\label{subsec:solver-stationarity-complexity}
We now combine the complexity bounds derived above with the outer-loop convergence result from Section~\ref{sec:convergence}. The goal is to find a random iterate \(Y_{\kappa}\) satisfying $\mathbb E\|\nabla\Psi_\eta(Y_{\kappa})\|^2\le\varepsilon^2$. By Corollary~\ref{cor:smoothed-stationarity}, in the small-smoothing regime $\eta \Lip_0(\Psi)\sqrt{d}\lesssim 1$, it is sufficient to run
\begin{equation}\label{eq:outer-iterations-section4}
    K=\mathcal{O}\Big(\frac{\Lip_1(\Psi_\eta)}{\varepsilon^2}\Big) = \mathcal{O}\Big(\frac{\sqrt d\,\Lip_0(\Psi)}{\eta\varepsilon^2}\Big)
\end{equation}
outer iterations and to ensure that the gradient estimator at each iteration has MSE of order \(\varepsilon^2\). By Lemma~\ref{lem:gradient-mse}, the effective lower-level accuracy is \(\varepsilon_\eta=C_{d,\eta}^{-1}\varepsilon^2\). Thus, the total complexity is obtained by multiplying the per-iteration estimator cost at accuracy \(\varepsilon_\eta\) by the number of outer iterations in \eqref{eq:outer-iterations-section4}.
\begin{cor}[Total work complexity by lower-level solver]
\label{cor:solver-stationarity-complexity}
Fix \(\eta>0\). Suppose that Assumptions~\ref{ass:meas-integrability}, \ref{ass:lipschitz-loss}, \ref{ass:lipschitz-upper}, \ref{ass:fd-moment-bounds} and the assumptions of Corollary~\ref{cor:smoothed-stationarity} hold, and let $\eta \Lip_0(\Psi)\sqrt{d}\lesssim 1$. Then, the following total expected work
bounds are sufficient to find a random iterate \(Y_{\kappa}\) satisfying $\mathbb E[\|\nabla\Psi_\eta(Y_{\kappa})\|^2]\le\varepsilon^2$.
\begin{enumerate}
    \item[\rm(i)] If the lower-level solver satisfies the linear convergence assumptions of Assumption~\ref{ass:linear-ll-convergence}, then the single-level method
    has total cost
    \(
        \mathcal{O}\big(
            C_{d,\eta}\sqrt d\,\Lip_0(\Psi)\,
            \eta^{-1}\varepsilon^{-4}
            \log\!\left(C_{d,\eta}\varepsilon^{-2}\right)
        \big).
    \)
    The multilevel method has total cost
    \(
        \mathcal{O}\big(
            C_{d,\eta}\sqrt d\,\Lip_0(\Psi)\,
            \eta^{-1}\varepsilon^{-4}
        \big).
    \)
    \item[\rm(ii)]
    If the assumptions of Proposition~\ref{prop:fi-sgd-rates} hold, for final-iterate SGD with the consecutive hierarchy, the single-level and multilevel methods both have total cost 
    \(
        \mathcal{O}\!\big(C_{d,\eta}^2\sqrt d\,\Lip_0(\Psi)\,\eta^{-1}\varepsilon^{-6}
        \big).
    \)
    \item[\rm(iii)]
    If the assumptions of Proposition~\ref{prop:pr-sgd-rates} hold, for PR-SGD with the consecutive hierarchy, the single-level method has total cost    
    \(
        \mathcal{O}\big(
            C_{d,\eta}^2\sqrt d\,\Lip_0(\Psi)\,
            \eta^{-1}\varepsilon^{-6}
        \big).
    \)
    The multilevel method has total cost
    \(
        \mathcal{O}\big(
            C_{d,\eta}\sqrt d\,\Lip_0(\Psi)\,
            \eta^{-1}\varepsilon^{-4}
            \log^2(C_{d,\eta}\varepsilon^{-2})
        \big).
    \)
\end{enumerate}
\end{cor}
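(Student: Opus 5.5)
The plan is to multiply the per-iteration estimator cost by the number of outer iterations. First I fix the outer parameters via Corollary~\ref{cor:smoothed-stationarity}: take $\gamma=\eta/(4\sqrt d\,\Lip_0(\Psi))$, which is admissible since $\Lip_1(\Psi_\eta)\le\sqrt d\Lip_0(\Psi)/\eta$. In the regime $\eta\Lip_0(\Psi)\sqrt d\lesssim1$ this gives $K=\mathcal O(\sqrt d\,\Lip_0(\Psi)\eta^{-1}\varepsilon^{-2})$, with $\Delta_0$ absorbed into the constant. The corollary also requires the per-step error bound $\mathbb E\|e_{k+1}\|^2\le\varepsilon^2/8$. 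Since Lemma~\ref{lem:gradient-mse} and all the estimator complexity theorems hold uniformly in $y$, I apply them at target accuracy $\varepsilon/\sqrt8$. Changing $\varepsilon$ by a constant factor only rescales $\varepsilon_\eta$ by a constant, so the orders are unaffected.

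The uniformity in $y$ is what makes the argument work. The iterate $Y_k$ is random, but the fresh samples $(U_i^{(\ell)},Z_i^{(\ell)})$, and the solver randomness, at step $k$ are independent of $Y_k$. Conditioning on $Y_k$ and applying the MSE bound at the deterministic point $y=Y_k$ gives $\mathbb E[\|e_{k+1}\|^2\mid Y_k]\le\varepsilon^2/8$, and the tower property then yields the unconditional bound. The same independence shows that the expected cost of step $k$ equals the deterministic cost $\operatorname{Cost}_{\rm SL}$ or $\operatorname{Cost}_{\rm ML}$ evaluated at accuracy $\varepsilon_\eta$. If the Poisson-randomized allocation is used, this holds in expectation. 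The total expected work is therefore $K$ times the per-step cost.

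Next I substitute the per-step costs, using $\varepsilon_\eta^{-1}=C_{d,\eta}\varepsilon^{-2}$.
\begin{itemize}
\item[(i)] Under linear convergence, Theorem~\ref{thm:linear-ll-complexity} gives per-step cost $C_{d,\eta}\varepsilon^{-2}\log(C_{d,\eta}\varepsilon^{-2})$ for single-level and $C_{d,\eta}\varepsilon^{-2}$ for multilevel. Multiplying by $K$ gives the two bounds in (i).
\item[(ii)] For final-iterate SGD, Theorem~\ref{thm:fi-sgd-complexity} gives $\varepsilon_\eta^{-2}=C_{d,\eta}^2\varepsilon^{-4}$ for both methods. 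Multiplying by $K$ gives $C_{d,\eta}^2\sqrt d\Lip_0(\Psi)\eta^{-1}\varepsilon^{-6}$.
\item[(iii)] For PR-SGD, Theorem~\ref{thm:pr-sgd-complexity} gives $\varepsilon_\eta^{-2}$ for single-level and $\varepsilon_\eta^{-1}\log^2(\varepsilon_\eta^{-1})$ for multilevel. Substituting $\varepsilon_\eta^{-1}=C_{d,\eta}\varepsilon^{-2}$ and multiplying by $K$ gives the two bounds in (iii).
\end{itemize}

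No step is hard. The only subtle points are the conditioning argument above, which is valid because the bias terms $b_\ell$ and $v_{\ell+1}$ and the constant $C_{d,\eta}$ are taken as suprema over $y\in\R^d$, and the treatment of constants. In (i) and (iii), the level choice $L\propto\log\varepsilon_\eta^{-1}$ or $L+1\propto\varepsilon_\eta^{-1/r}$ is made once, independent of $k$. All hidden constants depend only on $C_b,C_v,C_c,\rho,\Delta_0$ and are independent of $\varepsilon$, $\eta$ and $d$ beyond the dependence displayed through $C_{d,\eta}$.
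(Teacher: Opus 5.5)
Your proposal is correct and follows the paper's proof: fix $K=\mathcal{O}(\sqrt d\,\Lip_0(\Psi)\eta^{-1}\varepsilon^{-2})$ via Corollary~\ref{cor:smoothed-stationarity}, convert the per-step MSE requirement into the accuracy $\varepsilon_\eta=C_{d,\eta}^{-1}\varepsilon^2$ via Lemma~\ref{lem:gradient-mse}, and multiply $K$ by the per-iteration costs from Theorems~\ref{thm:linear-ll-complexity}, \ref{thm:fi-sgd-complexity} and~\ref{thm:pr-sgd-complexity}. You also make explicit two steps the paper leaves implicit, without changing the argument: running the estimator at accuracy $\varepsilon/\sqrt8$ to meet the $\varepsilon^2/8$ threshold, and conditioning on the random iterate $Y_k$, using fresh independent samples and the uniformity of $b_\ell, v_{\ell+1}$ in $y$.
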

\begin{proof}
By Corollary~\ref{cor:smoothed-stationarity}, it is sufficient to run $K$ outer iterations satisfying \eqref{eq:outer-iterations-section4}, and to choose the gradient estimator at each iteration with MSE \(\varepsilon^2\). Lemma~\ref{lem:gradient-mse} shows that this corresponds to the required estimator accuracy $\varepsilon_\eta=C_{d,\eta}^{-1}\varepsilon^2$ in the lower-level MLMC complexity estimates. 

{\rm(i)}: For linearly convergent lower-level solvers, Theorem~\ref{thm:linear-ll-complexity} gives per-iteration costs
$\mathcal{O}(\varepsilon_\eta^{-1}\log(\varepsilon_\eta^{-1}))$ for the single-level estimator and $\mathcal{O}(\varepsilon_\eta^{-1})$
for the multilevel estimator. Multiplying by \(K\) in \eqref{eq:outer-iterations-section4} gives the result.
\smallskip

{\rm(ii)}: For final-iterate SGD with the consecutive hierarchy, Theorem~\ref{thm:fi-sgd-complexity} gives per-iteration cost $\mathcal{O}(\varepsilon_\eta^{-2})$
for both the single-level and multilevel estimators. Multiplying by \(K\) in \eqref{eq:outer-iterations-section4} gives the result.
\smallskip

{\rm(iii)}: For PR-SGD, Theorem~\ref{thm:pr-sgd-complexity} gives per-iteration costs $\mathcal{O}(\varepsilon_\eta^{-2})$ for the single-level estimator and $\mathcal{O}(\varepsilon_\eta^{-1}\log^2(\varepsilon_\eta^{-1}))$ for the multilevel estimator. Multiplying by \(K\) in \eqref{eq:outer-iterations-section4} gives the result.
\end{proof}

A summary of these results is provided in both Table \ref{tab:stationarity-complexity} and Table \ref{table:comp}.

\begin{table}[htb!]
\small
\centering
\begin{tabular}{c|c|c|c}
\hline
Lower-level solver
& Estimator
& Per-iteration estimator cost
& Total cost for \(\mathbb E\|\nabla\Psi_\eta(Y_{\kappa})\|^2\le\varepsilon^2\)
\\
\hline
Linear solver
& SL
& \(\mathcal{O}(C_{d,\eta}\varepsilon^{-2}\log(C_{d,\eta}\varepsilon^{-2}))\)
& \(\mathcal{O}(C_{d,\eta}\sqrt d \Lip_0(\Psi)\,\eta^{-1}\varepsilon^{-4}
\log((C_{d,\eta}\varepsilon^{-2}))\)
\\
Linear solver
& ML
& \(\mathcal{O}(C_{d,\eta}\varepsilon^{-2})\)
& \(\mathcal{O}(C_{d,\eta}\sqrt d \Lip_0(\Psi)\,\eta^{-1}\varepsilon^{-4})\)
\\
FI-SGD (consecutive)
& SL/ML
& \(\mathcal{O}(C_{d,\eta}^2\varepsilon^{-4})\)
& \(\mathcal{O}(C_{d,\eta}^2\sqrt d \Lip_0(\Psi)\,\eta^{-1}\varepsilon^{-6})\)
\\
PR-SGD
& SL
& \(\mathcal{O}(C_{d,\eta}^2\varepsilon^{-4})\)
& \(\mathcal{O}(C_{d,\eta}^2\sqrt d \Lip_0(\Psi)\,\eta^{-1}\varepsilon^{-6})\)
\\
PR-SGD
& ML
& \(\mathcal{O}(C_{d,\eta}\varepsilon^{-2}\log^2(C_{d,\eta}\varepsilon^{-2}))\)
& \(\mathcal{O}(C_{d,\eta}\sqrt d \Lip_0(\Psi)\,\eta^{-1}\varepsilon^{-4}
\log^2(C_{d,\eta}\varepsilon^{-2}))\)
\\
\hline
\end{tabular}
\caption{Estimator and total work complexity for fixed smoothing parameter \(\eta>0\) in the small-smoothing regime $\eta \Lip_0(\Psi)\sqrt{d}\lesssim 1$.}
\label{tab:stationarity-complexity}
\end{table}
\subsection{Discussion}
The preceding results show that multilevel sampling does not automatically improve the complexity of derivative-free bilevel optimization. The decisive quantity is the decay of the coupled level differences. In the linear convergence regime, this decay is geometric and the multilevel estimator achieves the canonical Monte Carlo cost up to the smoothing factor. In the polynomial regime, the bias rate alone is insufficient to determine the
complexity: final-iterate SGD and Polyak--Ruppert averaged SGD with the consecutive hierarchies considered above both have \(\mathcal{O}(\ell^{-1})\) mean-square bias, but they have different level-difference rates. Final-iterate SGD yields \(v_{\ell+1}=\mathcal{O}(\ell^{-2})\), which does not improve the asymptotic complexity exponent, whereas Polyak--Ruppert averaging
yields \(v_{\ell+1}=\mathcal{O}(\ell^{-3})\), leading to the improved multilevel cost $=\mathcal{O}(C_{d,\eta}\varepsilon^{-2}\log^2((C_{d,\eta}\varepsilon^{-2}))$
per outer iteration. Thus, the effectiveness of MLMC is governed by the interaction between lower-level bias, level-difference decay, and per-level cost. This observation is the main solver-dependent message of the paper: MLMC rewards lower-level algorithms whose consecutive approximations are strongly coupled, and Polyak--Ruppert averaging provides such a coupling for stochastic gradient methods.

\begin{table}[htb!]
\centering
\small
\begin{tabular}{c|c|c|c|c}
\hline
Lower-level solver
& Bias \(b_L\)
& Level differences \(v_\ell\)
& Finest level \(L\)
& MLMC allocation \(N_\ell\)
\\
\hline
Linear solver
&
\(\rho^L\)
&
\(\rho^\ell\)
&
\(\displaystyle L\propto \frac{\log(\varepsilon_\eta^{-1})}{\log(\rho^{-1})}\)
&
\(\displaystyle
N_0\propto \varepsilon_\eta^{-1},\quad
N_\ell\propto
\varepsilon_\eta^{-1}
\sqrt{\frac{\rho^\ell}{\ell+1}}
\)
\\[1.2em]
FI-SGD (consecutive)
&
\(L^{-1}\)
&
\(\ell^{-2}\)
&
\(\displaystyle L\propto \varepsilon_\eta^{-1}\)
&
\(\displaystyle
N_\ell\propto
\varepsilon_\eta^{-1}L^{1/2}\ell^{-3/2}
\)
\\[1.2em]
PR-SGD (consecutive)
&
\(L^{-1}\)
&
\(\ell^{-3}\)
&
\(\displaystyle L\propto \varepsilon_\eta^{-1}\)
&
\(\displaystyle
N_\ell\propto
\varepsilon_\eta^{-1}\log(L)\,\ell^{-2}
\)
\\
\hline
\end{tabular}
\caption{Summary of level choices and continuous MLMC sample allocations for the lower-level solver classes. Here
\(\varepsilon_\eta:=C_{d,\eta}^{-1}\varepsilon^2\) denotes the accuracy parameter entering the gradient-estimator MSE bound.}
\label{table:comp}
\end{table}

\begin{rem}
The complexity bounds in Table~\ref{tab:stationarity-complexity} are stated with the constant \(C_{d,\eta}\) left implicit. When the finite-difference moment bound satisfies \(C_{\mathrm{fd}}(d,\eta) \asymp d\eta^{-2}\), 
which is the case under bounded sample loss, we have 
\(C_{d,\eta} \asymp d\eta^{-2}\). 
In the small-smoothing regime $\eta \Lip_0(\Psi)\sqrt{d}\lesssim 1$, the total complexity for PR-SGD becomes
$$
\mathrm{Cost}_{\mathrm{SL}}^{\mathrm{PR}}
= \mathcal{O}\!\left(\frac{d^2 \sqrt{d}\Lip_0(\Psi)}{\eta^5}\,\varepsilon^{-6}\right),
\qquad
\mathrm{Cost}_{\mathrm{ML}}^{\mathrm{PR}}
= \mathcal{O}\!\left(\frac{d \sqrt{d}\Lip_0(\Psi)}{\eta^3}\,\varepsilon^{-4}\log^2(C_{d,\eta}\varepsilon^{-2})\right).
$$
Thus, MLMC not only improves the \(\varepsilon\)-exponent from \(\varepsilon^{-6}\) to 
\(\varepsilon^{-4}\log^2(\varepsilon^{-1})\), but also improves the \(\eta\)-dependence 
from \(\eta^{-5}\) to \(\eta^{-3}\) up to logarithmic factors. In contrast, for final-iterate SGD with the consecutive hierarchy considered above, both SLMC and MLMC share the same \(\eta^{-5}\varepsilon^{-6}\) scaling, reflecting the absence of a multilevel speedup in that setting. This is important as it means  that the algorithm is less sensitive to choosing a very small $\eta$ to achieve high accuracy.
\end{rem}

\section{Extension to proximal outer-level variant}\label{sec:prox-extensions}
The main analysis was formulated and studied for the unconstrained smoothed problem
\[
    \min_{y\in\mathbb R^d}\Psi_\eta(y)\,.
\]
Although this is convenient for presenting the multilevel estimator, it requires uniform lower-level approximation bounds on all \(y\in\mathbb R^d\). In many applications the admissible hyperparameters are constrained, or one may want to
include a nonsmooth convex outer-level penalty. In this section, we describe a proximal variant of the method and show how the uniformity requirements can be localized. The price is that Gaussian smoothing still evaluates the objective outside the feasible set; these excursions are controlled explicitly by
Gaussian tail factors. We consider the composite smoothed problem
\begin{equation}\label{eq:prox-composite-problem}
    \min_{y\in\mathbb R^d}\ \{\Psi_\eta(y)+R(y)\}\,,
\end{equation}
where \(R:\mathbb R^d\to(-\infty,\infty]\) is proper, closed, and convex. 
We assume that the proximal map
\[
    \operatorname{prox}_{\gamma R}(z) :=\argmin_{y\in\mathbb R^d} \big\{R(y)+\frac{1}{2\gamma}\|y-z\|^2\big\}
\]
is efficiently computable. Given a stochastic estimator
\(\widehat{V}_{k+1}\) of \(\nabla\Psi_\eta(Y_k)\), the proximal outer iteration is
\begin{equation}\label{eq:prox-outer-update}
    Y_{k+1} = \operatorname{prox}_{\gamma R} \big(Y_k-\gamma\widehat{V}_{k+1}\big)\,.    
\end{equation}
For non-convex optimization, the natural stationarity measure for \eqref{eq:prox-composite-problem} is the proximal gradient mapping
\begin{equation}\label{eq:prox-gradient-mapping}
    \mathcal M_{\eta,\gamma}^R(y) := \frac1\gamma
    (y-\operatorname{prox}_{\gamma R}(y-\gamma\nabla\Psi_\eta(y)))\,.
\end{equation}

\subsection{Localized assumptions and Gaussian tail factors} \label{subsec:localized-tail-assumptions}
Assume in this subsection that the constrained case \(R=\delta_{\mathcal Y}\) is considered and that \(\mathcal Y\subset\mathbb R^d\) is compact and convex. For a nonempty closed convex set \(\mathcal Y\subset\mathbb R^d\), we denote by
\(\delta_{\mathcal Y}\) its convex indicator function,
\[
    \delta_{\mathcal Y}(y) :=
    \begin{cases}
        0, & y\in\mathcal Y,\\
        +\infty, & y\notin\mathcal Y.
    \end{cases}
\]
The proximal operator becomes in this case the orthogonal projector onto the set $\scrY$. The iteration reads accordingly
as
\[
    Y_{k+1} = \Pi_{\mathcal Y}\big(Y_k-\gamma\widehat{V}_{k+1}\big)\,,
\]
where $\Pi_{\mathcal Y}:\mathbb R^d\to\mathcal Y$ denotes the orthogonal projection onto $\mathcal Y$. Clearly, the projected iteration keeps \(Y_k\in\mathcal Y\). However, the Gaussian finite-difference estimator evaluates the lower-level solver at \(Y_k+\eta U\), where \(U\sim\mathcal N(0,I_d)\), and therefore at points outside \(\mathcal Y\). The proposed localized analysis does not remove the domain requirement, but it weakens the uniform lower-level approximation assumptions: accurate solver estimates are only required on a slightly enlarged set \(\mathcal Y^r\) (to be defined below), while evaluations outside \(\mathcal Y^r\) are controlled through Gaussian tail moments.

For \(r>0\), define 
\begin{equation}\label{eq:fixed-enlargement}
    \mathcal Y^r := \mathcal Y+rB =\{y+z:\ y\in\mathcal Y,\ \|z\|\le r\}\,.
\end{equation}
Then, we can guarantee
\[
    y+\eta U\in\mathcal Y^r \qquad \text{whenever } y\in\mathcal Y \text{ and } \eta\|U\|\le r.
\]
Thus, the probability of leaving the region \(\mathcal Y^r\) decreases as \(\eta\) decreases. To make this precise, we use the following Gaussian tail factor. Let \(s>1\) and \(s':=s/(s-1)\). For \(a\ge0\), define
\begin{equation} \label{eq:gaussian-tail-factor}
    \mathfrak T_{s,d}(a) := \Big(
        \mathbb E
        \big[
            \|U\|^{2s'}
            \mathds 1_{\{\|U\|>a\}}
        \big]
    \Big)^{1/s'},
    \qquad
    U\sim\mathcal N(0,I_d).
\end{equation}
By uniform integrability of the Gaussian measure, we have $\mathfrak T_{s,d}(a)\to0$ as $a\to\infty$. In particular, for a fixed \(r>0\), $\mathfrak T_{s,d}(r/\eta)\to0$ as $\eta\downarrow0$. We now define localized lower-level error quantities on \(\mathcal Y^r\):
\begin{equation}\label{eq:localized-bias-r}
    b_\ell^r := \sup_{y\in\mathcal Y^r}\ \mathbb E
    \big[ \Lip_0(g,X)^2 \|\widehat h_\ell(y,D)-h(y,D)\|_{\mathcal X}^2\big],
\end{equation}
and
\begin{equation}\label{eq:localized-difference-r}
    v_{\ell+1}^r := \sup_{y\in\mathcal Y^r}\ \mathbb E\big[\Lip_0(g,X)^2 \|\widehat h_{\ell+1}(y,D)-\widehat h_\ell(y,D)\|_{\mathcal X}^2\big].
\end{equation}
The weights \(\Lip_0(g,X)^2\) arise from the Lipschitz estimate for the finite-difference numerator. To control the Gaussian tail outside \(\mathcal Y^r\), we assume uniform higher moments of the weighted lower-level errors along Gaussian perturbations.

\begin{ass}[Moment bounds]
\label{ass:tail-moment-bounds}
There exist \(s>1\), constants \(\bar b_\ell<\infty\) and constants \(\bar v_{\ell+1}<\infty\), $\ell\ge0$, such that, for every \(\eta\in(0,\eta_{\max}]\),
\begin{equation}\label{eq:tail-moment-bias}
    \sup_{y\in\mathcal Y}\ \mathbb E \big[(\Lip_0(g,X)^2\|\widehat h_\ell(y+\eta U,D)-h(y+\eta U,D)\|_{\mathcal X}^2)^s\big]\le\bar b_\ell^s,
\end{equation}
and
\begin{equation}\label{eq:tail-moment-difference}
    \sup_{y\in\mathcal Y}\ \mathbb E\big[(\Lip_0(g,X)^2\|\widehat h_{\ell+1}(y+\eta U,D)-\widehat h_\ell(y+\eta U,D)\|_{\mathcal X}^2)^s\big]\le\bar v_{\ell+1}^s,
\end{equation}
where \(U\sim\mathcal N(0,I_d)\) is independent of \(Z=(X,D)\).
\end{ass}

\begin{lem}[Localized MSE bounds]
\label{lem:localized-mse-explicit-tail}
Let \(\mathcal Y\subset\mathbb R^d\) be compact and convex. Suppose Assumptions~\ref{ass:meas-integrability}, \ref{ass:lipschitz-loss}, \ref{ass:fd-moment-bounds}, and
\ref{ass:tail-moment-bounds} hold. Then, for every \(y\in\mathcal Y\), every \(r>0\), and every \(\eta\in(0,\eta_{\max}]\), the single-level estimator
satisfies
\begin{equation}\label{eq:localized-sl-mse-explicit}
    \mathbb E\Big[\big\|\GhatSL_{\eta,\ell,N}(y)-\nabla\Psi_\eta(y)\big\|^2\Big]\le
    C_{d,\eta} \big(\frac1N + b_\ell^r + \bar b_\ell\,\mathfrak T_{s,d}(r/\eta)\big)\,.
\end{equation}
Similarly, the multilevel estimator satisfies
\begin{equation}\label{eq:localized-ml-mse-explicit}
    \mathbb E\Big[\big\|\GhatML_{\eta,L,\mathbf N}(y)-\nabla\Psi_\eta(y)\big\|^2\Big]\le C_{d,\eta}\big(\frac1{N_0}+\sum_{\ell=0}^{L-1}\frac{v_{\ell+1}^r+\bar v_{\ell+1}\mathfrak T_{s,d}(r/\eta)}{N_{\ell+1}}+b_L^r+\bar b_L\mathfrak T_{s,d}(r/\eta)\big)\,.
\end{equation}
In both cases, the constant \(C_{d,\eta}>0\) is independent of \(L\), \(\mathbf N\), \(r\), but satisfies \(C_{d,\eta}\propto d\eta^{-2} + C_{\mathrm{fd}}(d,\eta)\).
\end{lem}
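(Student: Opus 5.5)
We will follow the proof of Lemma~\ref{lem:gradient-mse} and change only the step where the lower-level error is bounded uniformly over $\mathbb R^d$. That step is replaced by a split of the Gaussian perturbation into the event $\{\eta\|U\|\le r\}$ and its complement.

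\textbf{Single level.} We first use the bias--variance decomposition \eqref{eq:sl-error-decomposition} together with $\|a+b\|^2\le 2\|a\|^2+2\|b\|^2$. Since $\GhatSL_{\eta,\ell,N}(y)$ is an average of $N$ i.i.d.\ terms with mean $\nabla\Psi_{\eta,\ell}(y)$, its variance is at most $C_{\rm fd}/N$ by Assumption~\ref{ass:fd-moment-bounds}. This gives the $1/N$ term.

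For the bias $\nabla\Psi_{\eta,\ell}(y)-\nabla\Psi_\eta(y)$ we use \eqref{eq:exact-smoothed-gradient}--\eqref{eq:level-smoothed-gradient}, Jensen, and Assumption~\ref{ass:lipschitz-loss}:
\[
\|\nabla\Psi_{\eta,\ell}(y)-\nabla\Psi_\eta(y)\|^2\le \frac{2}{\eta^2}\Big(\mathbb E\big[\|U\|^2 E_\ell(y+\eta U)\big]+\mathbb E\big[\|U\|^2E_\ell(y)\big]\Big),
\]
where $E_\ell(z):=\Lip_0(g,X)^2\|\widehat h_\ell(z,D)-h(z,D)\|^2$.

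The unperturbed term is handled directly. Since $y\in\mathcal Y\subset\mathcal Y^r$ and $U$ is independent of $Z$, it is at most $d\,b_\ell^r$.

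For the perturbed term we split on $\{\|U\|\le r/\eta\}$. On this event $y+\eta U\in\mathcal Y^r$, so conditioning on $U$ and using independence gives at most $\mathbb E\|U\|^2\, b_\ell^r= d\,b_\ell^r$. On the complement we apply Hölder with exponents $s,s'$:
\[
\mathbb E\big[\|U\|^2\mathds 1_{\{\|U\|>r/\eta\}}E_\ell(y+\eta U)\big]\le \big(\mathbb E[E_\ell(y+\eta U)^s]\big)^{1/s}\big(\mathbb E[\|U\|^{2s'}\mathds 1_{\{\|U\|>r/\eta\}}]\big)^{1/s'}\le \bar b_\ell\,\mathfrak T_{s,d}(r/\eta),
\]
where the last step uses \eqref{eq:tail-moment-bias}. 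Collecting the constants $d\eta^{-2}$ and $C_{\rm fd}$ gives \eqref{eq:localized-sl-mse-explicit}.

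\textbf{Multilevel.} We write the error as $(\GhatML-\nabla\Psi_{\eta,L})+(\nabla\Psi_{\eta,L}-\nabla\Psi_\eta)$. The second term is exactly the single-level bias at level $L$, bounded as above by $C(b_L^r+\bar b_L\mathfrak T_{s,d}(r/\eta))$.

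The first term is a sum of independent zero-mean level estimators, so its second moment is the sum of their variances. The level-zero variance is at most $C_{\rm fd}/N_0$. Each correction variance is at most $N_{\ell+1}^{-1}$ times the second moment of one coupled summand. By the Lipschitz bound on $g$, that second moment is at most
\[
\frac{2}{\eta^2}\Big(\mathbb E\big[\|U\|^2\Delta_\ell(y+\eta U)\big]+\mathbb E\big[\|U\|^2\Delta_\ell(y)\big]\Big),
\qquad \Delta_\ell(z):=\Lip_0(g,X)^2\|\widehat h_{\ell+1}(z,D)-\widehat h_\ell(z,D)\|^2 .
\]
The identical split and Hölder step, now with \eqref{eq:tail-moment-difference}, bound this by $C(v^r_{\ell+1}+\bar v_{\ell+1}\mathfrak T_{s,d}(r/\eta))$. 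This yields \eqref{eq:localized-ml-mse-explicit}.

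\textbf{Main obstacle.} The delicate point is the Hölder tail step. In the moment assumption the expectation is joint over $(U,Z)$ and the solver randomness, so Hölder must be applied on that product space. The outside-region contribution must also keep the factor $\|U\|^2$ inside the tail moment, which is why $\mathfrak T_{s,d}$ involves $\|U\|^{2s'}$.

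A second check is that $\mathbb E\|U\|^2=d$ is absorbed into the constant. This keeps $C_{d,\eta}\propto d\eta^{-2}+C_{\rm fd}$ independent of $r$, $L$ and $\mathbf N$.
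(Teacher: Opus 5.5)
Your proposal is correct and follows the same route as the paper's proof. You bound the finite-difference discrepancy through the Lipschitz property of $g$, and control the unperturbed term and the perturbed term on $\{\eta\|U\|\le r\}$ by $b_\ell^r$ (resp.\ $v_{\ell+1}^r$). You then apply H\"older with exponents $s,s'$ on the tail event via Assumption~\ref{ass:tail-moment-bounds}, add the variance bound from Assumption~\ref{ass:fd-moment-bounds}, and treat the multilevel case correction by correction plus the finest-level bias. Your write-up is more explicit than the paper's sketch, for instance in the conditioning on $U$ and in tracking $\mathbb E\|U\|^2=d$ into $C_{d,\eta}$, but the argument is the same.
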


\begin{proof}
We prove the single-level estimate, the multilevel estimate follows by the same argument applied to each correction. Let
\[
    Q_\ell(y;U,Z) := \frac{G_\ell(y+\eta U;Z)-G_\ell(y;Z)}{\eta}U, \qquad Q_\infty(y;U,Z) := \frac{G_\infty(y+\eta U;Z)-G_\infty(y;Z)}{\eta}U.
\]
By the Lipschitz property of \(g\),
\[    \|Q_\ell(y;U,Z)-Q_\infty(y;U,Z)\|
    \le
    \frac{\Lip_0(g,X)\|U\|}{\eta}
    \Big(\|\widehat h_\ell(y+\eta U,D)-h(y+\eta U,D)\|_{\mathcal X} + \|\widehat h_\ell(y,D)-h(y,D)\|_{\mathcal X}
    \Big).
\]
The term at \(y\in\mathcal Y\) is bounded by \(b_\ell^r\), since
\(\mathcal Y\subset\mathcal Y^r\). For the shifted term, we split the expectation into
\[
    \{\eta\|U\|\le r\} \qquad\text{and}\qquad \{\eta\|U\|>r\}.
\]
On \(\{\eta\|U\|\le r\}\), the perturbed point \(y+\eta U\) belongs to \(\mathcal Y^r\), and the contribution is again controlled by \(b_\ell^r\). On the tail event, Hölder's inequality and Assumption~\ref{ass:tail-moment-bounds}
give
\begin{align*}
    &\mathbb E
    \big[
        \|U\|^2 \Lip_0(g,X)^2
        \|\widehat h_\ell(y+\eta U,D)-h(y+\eta U,D)\|_{\mathcal X}^2
        \mathbf 1_{\{\eta\|U\|>r\}}
    \big]
    \\
    &\quad\le
    \Big(
        \mathbb E
        \big[
            (
                \Lip_0(g,X)^2
                \|\widehat h_\ell(y+\eta U,D)-h(y+\eta U,D)\|_{\mathcal X}^2
            )^s
        \big]
    \Big)^{1/s}
    \Big(
        \mathbb E
        \big[
            \|U\|^{2s'}
            \mathds 1_{\{\|U\|>r/\eta\}}
        \big]
    \Big)^{1/s'} \\   & \quad \le
    \bar b_\ell\,\mathfrak T_{s,d}(r/\eta).
\end{align*}
Similarly as in the proof of Lemma~\ref{lem:gradient-mse}, we use the finite-difference sampling variance bound from
Assumption~\ref{ass:fd-moment-bounds}, to deduce \eqref{eq:localized-sl-mse-explicit}.

For the multilevel estimator, the correction numerator satisfies
\[
    \|\Delta Q_{\ell+1}(y;U,Z)\|
    \le
    \frac{\Lip_0(g,X)\|U\|}{\eta}
    \Big(
        \|\widehat h_{\ell+1}(y+\eta U,D)
          -\widehat h_\ell(y+\eta U,D)\|_{\mathcal X}
        +\|\widehat h_{\ell+1}(y,D)
          -\widehat h_\ell(y,D)\|_{\mathcal X}
    \Big).
\]
The local part is bounded by \(v_{\ell+1}^r\), while the tail part is bounded by $\bar v_{\ell+1}\mathfrak T_{s,d}(r/\eta)$ by the same Hölder argument from above. The finest-level bias is controlled as in the single-level case with \(\ell=L\). This proves \eqref{eq:localized-ml-mse-explicit}.
\end{proof}

\subsection{Proximal convergence with localized MLMC estimators}
\label{subsec:prox-convergence-localized}
We now combine the localized estimator bounds with a stochastic proximal gradient descent argument.
\begin{theorem}[Proximal convergence with estimator error]
\label{thm:prox-controlled-mse}
Assume that \(\Psi_\eta+R\) is bounded below by \(\Psi_{\eta,R}^\star>-\infty\). Let \((Y_k)_{k\ge0}\) be generated by \eqref{eq:prox-outer-update} with $0<\gamma\le \frac1{\Lip_1(\Psi_\eta)}$ and $e_{k+1}:=\widehat{V}_{k+1}-\nabla\Psi_\eta(Y_k)$. Then, for every \(K\ge1\),
\begin{equation}\label{eq:prox-convergence-bound}
    \frac1K\sum_{k=0}^{K-1}
    \mathbb E[\|\mathcal M_{\eta,\gamma}^R(Y_k)\|^2]\le\frac{8}{\gamma K}(\Psi_\eta(Y_0)+R(Y_0)-\Psi_{\eta,R}^\star)+\frac{10}{K}\sum_{k=0}^{K-1}\mathbb E\|e_{k+1}\|^2\,.
\end{equation}
\end{theorem}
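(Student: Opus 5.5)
The plan is to run the standard one-step descent analysis for stochastic proximal gradient with an inexact gradient. I would compare the actual iterate $Y_{k+1}=\operatorname{prox}_{\gamma R}(Y_k-\gamma\widehat V_{k+1})$ with the ideal iterate $\bar Y_{k+1}:=\operatorname{prox}_{\gamma R}(Y_k-\gamma\nabla\Psi_\eta(Y_k))$. By definition \eqref{eq:prox-gradient-mapping}, $Y_k-\bar Y_{k+1}=\gamma\mathcal M^R_{\eta,\gamma}(Y_k)$. Nonexpansiveness of $\operatorname{prox}_{\gamma R}$ gives $\|Y_{k+1}-\bar Y_{k+1}\|\le\gamma\|e_{k+1}\|$. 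The argument is pathwise plus expectation, so no unbiasedness of $e_{k+1}$ is needed, consistent with the biased oracle of Lemma~\ref{lem:gradient-mse}.

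\textbf{Step 1 (one-step inequality).} Write $\Phi:=\Psi_\eta+R$. The descent lemma for $\Psi_\eta$ with $\Lip_1(\Psi_\eta)\le1/\gamma$ gives
\[
\Psi_\eta(Y_{k+1})\le\Psi_\eta(Y_k)+\langle\nabla\Psi_\eta(Y_k),Y_{k+1}-Y_k\rangle+\tfrac1{2\gamma}\|Y_{k+1}-Y_k\|^2 .
\]
The prox optimality condition says $\tfrac1\gamma(Y_k-\gamma\widehat V_{k+1}-Y_{k+1})\in\partial R(Y_{k+1})$. Testing it against the comparison point $Y_k$ gives
\[
R(Y_{k+1})\le R(Y_k)+\langle \widehat V_{k+1},Y_k-Y_{k+1}\rangle-\tfrac1\gamma\|Y_{k+1}-Y_k\|^2 .
\]
Adding the two inequalities yields
\[
\Phi(Y_{k+1})\le\Phi(Y_k)-\tfrac1{2\gamma}\|Y_{k+1}-Y_k\|^2-\langle e_{k+1},Y_{k+1}-Y_k\rangle .
\]
Young's inequality on the cross term, $|\langle e,\Delta\rangle|\le\tfrac1{4\gamma}\|\Delta\|^2+\gamma\|e\|^2$, then gives
\[
\Phi(Y_{k+1})\le\Phi(Y_k)-\tfrac1{4\gamma}\|Y_{k+1}-Y_k\|^2+\gamma\|e_{k+1}\|^2 .
\]

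\textbf{Step 2 (from actual step to gradient mapping).} Using the comparison with $\bar Y_{k+1}$,
\[
\gamma^2\|\mathcal M^R_{\eta,\gamma}(Y_k)\|^2=\|Y_k-\bar Y_{k+1}\|^2\le2\|Y_{k+1}-Y_k\|^2+2\gamma^2\|e_{k+1}\|^2 .
\]
Hence $\|Y_{k+1}-Y_k\|^2\ge\tfrac{\gamma^2}{2}\|\mathcal M^R_{\eta,\gamma}(Y_k)\|^2-\gamma^2\|e_{k+1}\|^2$. Substituting into Step 1 gives
\[
\tfrac{\gamma}{8}\|\mathcal M^R_{\eta,\gamma}(Y_k)\|^2\le\Phi(Y_k)-\Phi(Y_{k+1})+\tfrac{5\gamma}{4}\|e_{k+1}\|^2 .
\]

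\textbf{Step 3 (telescoping).} Take expectations, sum over $k=0,\dots,K-1$, bound $\mathbb E\Phi(Y_K)\ge\Psi^\star_{\eta,R}$, and multiply by $8/(\gamma K)$. This produces exactly the constants $8$ and $10$ in \eqref{eq:prox-convergence-bound}.

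\textbf{Main obstacle.} Step 1 is the delicate point: one must use the prox subgradient inequality with the perturbed direction $\widehat V_{k+1}$ rather than the exact gradient, so that the error appears only through $\langle e_{k+1},Y_{k+1}-Y_k\rangle$. One must also balance the Young constant so that the coefficients land on $8$ and $10$. If $R(Y_0)=+\infty$ the bound is trivial. Finally, $Y_k\in\operatorname{dom}R$ for $k\ge1$ holds automatically, which justifies the subgradient inequality.
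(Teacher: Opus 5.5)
Your proof is correct and follows essentially the same route as the paper. Both arguments use the prox subgradient inequality tested at $Y_k$ together with the descent lemma, and apply Young's inequality with the same constants to get $\Phi(Y_{k+1})\le\Phi(Y_k)-\frac{1}{4\gamma}\|Y_{k+1}-Y_k\|^2+\gamma\|e_{k+1}\|^2$. Both then use nonexpansiveness of $\operatorname{prox}_{\gamma R}$ to get $\|\mathcal M^R_{\eta,\gamma}(Y_k)\|^2\le 2\gamma^{-2}\|Y_{k+1}-Y_k\|^2+2\|e_{k+1}\|^2$, and finish by telescoping; the only difference is notational, since the paper works with the scaled step $\Delta Y_{k+1}=\gamma^{-1}(Y_k-Y_{k+1})$ instead of introducing the ideal iterate $\bar Y_{k+1}$ explicitly.
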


\begin{proof}
Let $\Delta{Y}_{k+1}:=\frac1\gamma(Y_k-Y_{k+1})$ be the stochastic proximal gradient mapping. The optimality condition for the proximal step gives, with \(z=Y_k\),
\[
    R(Y_{k+1})-R(Y_k)\le \gamma\langle \widehat{V}_{k+1},\Delta Y_{k+1}\rangle-\gamma\|\Delta Y_{k+1}\|^2.
\]
By \(\Lip_1(\Psi_\eta)\)-smoothness of \(\Psi_\eta\),
\[
    \Psi_\eta(Y_{k+1})\le \Psi_\eta(Y_k)- \gamma\langle\nabla\Psi_\eta(Y_k),\Delta Y_{k+1}\rangle+\frac{\Lip_1(\Psi_\eta)\gamma^2}{2} \|\Delta Y_{k+1}\|^2.
\]
Adding the two inequalities and using $\widehat{V}_{k+1}
    =
    \nabla\Psi_\eta(Y_k)+e_{k+1}$,
we obtain, for \(\gamma\le 1/\Lip_1(\Psi_\eta)\),
\[
    \Psi_\eta(Y_{k+1})+R(Y_{k+1})
    \le
    \Psi_\eta(Y_k)+R(Y_k)
    -
    \frac{\gamma}4
    \|\Delta Y_{k+1}\|^2
    +
    \gamma\|e_{k+1}\|^2\,.
\]
Here, we used $$1-\frac{\Lip_1(\Psi_\eta)\gamma}{2}\ge \frac12,$$ and Young's inequality,
\[
    \langle e_{k+1},\Delta Y_{k+1}\rangle
    \le
    \frac14
    \|\Delta Y_{k+1}\|^2
    +
    \|e_{k+1}\|^2\,.
\]
Moreover, by non-expansiveness of the proximal map,
\begin{align*}
    \|\mathcal M_{\eta,\gamma}^R(Y_k)\|^2
    &\le
    2\|\Delta Y_{k+1}\|^2+2\|e_{k+1}\|^2\\
    &\le \frac{8(\Psi_\eta(Y_k)-\Psi_{\eta}(Y_{k+1}) + R(Y_k)-R(Y_{k+1}))}{\gamma} +10 \|e_{k+1}\|^2
\end{align*}
Taking expectations, summing over \(k=0,\ldots,K-1\), and using the lower bound \(\Psi_{\eta,R}^\star\) gives \eqref{eq:prox-convergence-bound} and completes the proof.
\end{proof}

The next corollary gives the localized constrained result with explicit \(\eta\)-dependent tails.
\begin{cor}[Projected MLMC stationarity] \label{cor:projected-stationarity-explicit-tail}
Let \(R=\delta_{\mathcal Y}\), where \(\mathcal Y\subset\mathbb R^d\) is compact and convex. Let the assumptions of Theorem~\ref{thm:prox-controlled-mse} and Lemma~\ref{lem:localized-mse-explicit-tail} hold. Let
\(\kappa \sim{\rm Unif}\{0,\ldots,K-1\}\) be independent of all algorithmic randomness. For the single-level estimator with fixed $L\ge0,\ N\in\mathbb N$, there exists $C>0$ such that it holds
\begin{equation}\label{eq:projected-sl-stationarity-explicit}
    \mathbb E[\|\mathcal M_{\eta,\gamma}^{\delta_{\mathcal Y}}(Y_{\kappa})\|^2]\le C\Big[\frac{\Psi_\eta(Y_0)-\Psi_{\eta,\mathcal Y}^\star}{\gamma K}+C_{d,\eta}\big(\frac1N+b_L^r+\bar b_L\,\mathfrak T_{s,d}(r/\eta)\big)
    \Big].
\end{equation}
For the multilevel estimator, there exists $C>0$, such that it holds
\begin{equation*}
    \mathbb E[\|\mathcal M_{\eta,\gamma}^{\delta_{\mathcal Y}}(Y_{\kappa})\|^2]\le
    C\Bigg[
        \frac{
            \Psi_\eta(Y_0)-\Psi_{\eta,\mathcal Y}^\star
        }{\gamma K}+
        C_{d,\eta}
        \left(
            \frac1{N_0}
            +
            \sum_{\ell=0}^{L-1}
            \frac{
                v_{\ell+1}^r
                +
                \bar v_{\ell+1}\mathfrak T_{s,d}(r/\eta)
            }{N_{\ell+1}}
            +
            b_L^r
            +
            \bar b_L\mathfrak T_{s,d}(r/\eta)
        \right)
    \Bigg],
\end{equation*}
where $\Psi_{\eta,\mathcal Y}^\star:=\inf_{y\in\mathcal Y}\Psi_\eta(y)$. Consequently, to obtain $\mathbb E[\|\mathcal M_{\eta,\gamma}^{\delta_{\mathcal Y}}(Y_{\kappa})\|^2]\le \varepsilon^2$, it is sufficient to choose $K=\mathcal{O}(\frac{\Lip_1(\Psi_\eta)}{\varepsilon^2})$, the local Monte Carlo and lower-level errors of order $\mathcal{O}(C_{d,\eta}^{-1}\varepsilon^2)$, and the tail radius \(r\) such that
\begin{equation}\label{eq:tail-accuracy-condition}
    \bar b_L\mathfrak T_{s,d}(r/\eta)
    +
    \sum_{\ell=0}^{L-1}
    \frac{
        \bar v_{\ell+1}\mathfrak T_{s,d}(r/\eta)
    }{N_{\ell+1}}
    \lesssim
    C_{d,\eta}^{-1}\varepsilon^2\,.
\end{equation}
\end{cor}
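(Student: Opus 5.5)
The plan is to combine Theorem~\ref{thm:prox-controlled-mse} with Lemma~\ref{lem:localized-mse-explicit-tail} after taking $R=\delta_{\mathcal Y}$. In that case $\operatorname{prox}_{\gamma R}=\Pi_{\mathcal Y}$, the iterates stay in $\mathcal Y$, and $R(Y_k)=0$ for all $k$. Starting from $Y_0\in\mathcal Y$, the composite gap $\Psi_\eta(Y_0)+R(Y_0)-\Psi^\star_{\eta,R}$ therefore reduces to $\Psi_\eta(Y_0)-\Psi^\star_{\eta,\mathcal Y}$, with $\Psi^\star_{\eta,\mathcal Y}=\inf_{\mathcal Y}\Psi_\eta$. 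This infimum is finite because $\Psi_\eta$ is continuous and $\mathcal Y$ is compact.

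Since $\kappa$ is uniform on $\{0,\dots,K-1\}$ and independent of the algorithm, $\mathbb E\|\mathcal M^{\delta_{\mathcal Y}}_{\eta,\gamma}(Y_\kappa)\|^2$ equals the Cesàro average on the left of \eqref{eq:prox-convergence-bound}. The next step is to bound $\mathbb E\|e_{k+1}\|^2$ uniformly in $k$. Conditionally on $Y_k$, the estimator $\widehat V_{k+1}$ uses fresh samples that are independent of the past. Since $Y_k\in\mathcal Y$ almost surely, Lemma~\ref{lem:localized-mse-explicit-tail} applies pointwise at $y=Y_k$:
\[
\mathbb E\big[\|e_{k+1}\|^2\mid Y_k\big]\le C_{d,\eta}\Big(\tfrac1N+b_L^r+\bar b_L\mathfrak T_{s,d}(r/\eta)\Big),
\]
and the analogous multilevel bound \eqref{eq:localized-ml-mse-explicit} holds in the MLMC case. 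These right-hand sides are deterministic and do not depend on $Y_k$, because they come from suprema over $\mathcal Y^r$ and $\mathcal Y$. Taking total expectations and inserting the result into \eqref{eq:prox-convergence-bound} gives both displayed inequalities with $C=\max\{8,10\}=10$.

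The main obstacle is making the conditioning step rigorous. The lemma is stated for a deterministic $y$, so I would argue through a regular conditional expectation, or equivalently a freezing lemma: $\widehat V_{k+1}$ is a measurable function of $(Y_k,\text{fresh samples})$, and the fresh samples are independent of $\mathcal F_k$. The estimate then holds for every fixed $y\in\mathcal Y$, and integrating over the law of $Y_k$ transfers it to the random iterate. Measurability of the lower-level maps is provided by Assumption~\ref{ass:meas-integrability}.

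For the final complexity statement, I would choose $\gamma=1/\Lip_1(\Psi_\eta)$. Then $K=\mathcal O(\Lip_1(\Psi_\eta)\varepsilon^{-2})$ makes the first term at most $\varepsilon^2/2$, up to the constant $C$. The remaining terms fall into two groups. The local sampling and bias terms, $1/N_0$, $\sum_\ell v^r_{\ell+1}/N_{\ell+1}$ and $b^r_L$ (or $1/N$ and $b^r_L$), are each made $\lesssim C_{d,\eta}^{-1}\varepsilon^2$. The tail terms are covered exactly by \eqref{eq:tail-accuracy-condition}. This choice is possible because $\mathfrak T_{s,d}(r/\eta)\to 0$ as $r/\eta\to\infty$ by uniform integrability. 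Multiplying these contributions by $C\,C_{d,\eta}$ bounds their total by a constant multiple of $\varepsilon^2$, and rescaling $\varepsilon$ by an absolute constant gives the claimed accuracy.
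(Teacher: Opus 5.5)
Your proposal is correct and is the argument the paper intends: the paper gives no separate proof and treats the corollary as the direct combination of Theorem~\ref{thm:prox-controlled-mse} with $R=\delta_{\mathcal Y}$ and the uniform-in-$y\in\mathcal Y$ bounds of Lemma~\ref{lem:localized-mse-explicit-tail}. Your freezing-lemma conditioning, the requirement $Y_0\in\mathcal Y$, and the explicit constant $C=10$ make rigorous what the paper leaves implicit. One small caveat concerns your aside that the tail condition can always be met by enlarging $r$: this is not automatic, because $b_L^r$ and $v_{\ell+1}^r$ are nondecreasing in $r$. The corollary only asserts sufficiency, however, so this does not affect your proof.
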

We use the following explicit tail representation of $\mathfrak T_{s,d}$. Since \(\|U\|^2\) has a \(\chi_d^2\)-distribution, the quantity $\mathfrak T_{s,d}(\cdot)$ can be estimated as follows:
Let $U\sim N(0,I_d)$ and apply Cauchy--Schwarz,
\[
\mathbb E[\|U\|^{2s'} \mathds 1_{\{\|U\|>a\}}]
\le
\big(\mathbb E \|U\|^{4s'}\big)^{1/2}
\big(\mathbb P(\|U\|>a)\big)^{1/2}.
\]
Since $\|U\|^2\sim \chi_d^2$, we have
\[
\mathbb E \|U\|^{4s'} = 2^{2s'}\frac{\Gamma(2s'+d/2)}{\Gamma(d/2)}\,,
\]
where $\Gamma$ denotes the gamma function $\Gamma(z) = \int_0^\infty t^{z-1}e^{-t}\,{\rm d}t$, $z>0$. Therefore,
\[
\mathbb E[\|U\|^{2s'} \mathds 1_{\{\|U\|>a\}}]
\le
\Big(
2^{2s'}\frac{\Gamma(2s'+d/2)}{\Gamma(d/2)}
\Big)^{1/2}
\big(\mathbb P(\|U\|>a)\big)^{1/2}.
\]
Using the Gaussian concentration bound
\[
\mathbb P(\|U\|>\sqrt d+t)\le e^{-t^2/2}, \qquad t\ge 0,
\]
with $t=a-\sqrt d$, we obtain, for $a\ge \sqrt d$,
\[
\mathbb P(\|U\|>a)\le e^{-(a-\sqrt d)^2/2}.
\]
Hence,
\[
\mathfrak T_{s,d}(a)
\le
2\left(
\frac{\Gamma(2s'+d/2)}{\Gamma(d/2)}
\right)^{\frac{1}{2s'}}
e^{-\frac{(a-\sqrt d)^2}{4s'}}
\]
for all $a\ge \sqrt d$. Consequently, we require $r/\eta \ge \sqrt{d}$ and the left-hand side of \eqref{eq:tail-accuracy-condition} is bounded by
\[
\bar b_L\mathfrak T_{s,d}(r/\eta) + \sum_{\ell=0}^{L-1}
\frac{
    \bar v_{\ell+1}\mathfrak T_{s,d}(r/\eta)
}{N_{\ell+1}}
\le
B_{s,d} \bigg(
\bar b_L
+
\sum_{\ell=0}^{L-1}
\frac{\bar v_{\ell+1}}{N_{\ell+1}}
\bigg)
\exp\!\bigg(
-\frac{(r/\eta-\sqrt d)^2}{4s'}
\bigg),
\]
with $B_{s,d}:=2\big(\frac{\Gamma(2s'+d/2)}{\Gamma(d/2)} \big)^{\frac{1}{2s'}}$. A sufficient condition for \eqref{eq:tail-accuracy-condition} is
\[ \frac{(r/\eta-\sqrt d)^2}{4s'} \ge \log\!\Bigg(\frac{
B_{s,d}
(
\bar b_L
+
\sum_{\ell=0}^{L-1}
\frac{\bar v_{\ell+1}}{N_{\ell+1}}
)
}{
C_{d,\eta}^{-1}\varepsilon^2
}
\Bigg),
\qquad r/\eta\ge \sqrt d.
\]
For $\eta\in(0,\frac{r}{2\sqrt{d}})$, we may choose $N_\ell$ such that
\[
\bar b_L + \sum_{\ell=0}^{L-1} \frac{\bar v_{\ell+1}}{N_{\ell+1}}
\lesssim \frac{C_{d,\eta}^{-1}\varepsilon^2}{B_{s,d}} \exp\bigg(\frac{r^2}{16s'\eta^2}\bigg)\,.
\]

Thus, the additional region on which accurate lower-level approximations are required grows only at the Gaussian concentration scale determined by the smoothing parameter and dimension.



\section{Numerical experiments}
\label{sec:numerics}

In this section, we present numerical experiments illustrating the performance of the proposed ML-DFO method. 
We consider two imaging examples. The first is a large-scale color image denoising problem with a learned Field-of-Experts regularizer. This example
uses a stochastic lower-level solver and is designed to test the multilevel estimator in a high-dimensional parameter-learning setting. The second example is sparse-view computed tomography based on the Radon transform, where the lower-level reconstruction is solved using FISTA. 
All costs are reported in units of cumulative lower-level work. Thus, for a lower-level iterative method, one unit of work corresponds to one lower-level iteration.

These experiments demonstrate the practical relevance of multilevel sampling under implementation choices motivated by computational efficiency, while Section~\ref{subsubsec:numerical-complexity-rates} provides a more targeted empirical illustration of the complexity predictions developed in Section~\ref{sec:solver-complexity}.

\subsection{Large-scale color denoising with a Field-of-Experts regularizer}
\label{subsec:foe-denoising}
Our first example is a color image denoising problem with a learned Field-of-Experts (FoE) regularizer. The upper-level method tunes a convolutional regularizer with multiple color
filters and learned potential weights, while the lower-level problem is solved approximately by a stochastic method. We use RGB images from the BSDS500 dataset corrupted by additive Gaussian noise,
\[
    D = X + W, \qquad W\sim \mathcal N(0,\sigma^2 I),
\]
with \(\sigma=25/255\). For a clean image \(X\) and noisy observation \(D\), the upper-level objective is
\begin{equation}\label{eq:foe-ul}
     \min_{y\in\mathbb R^d}
    \mathbb E\!\left[
        \frac12\|h_{\theta(y)}(D)-X\|^2
    \right],
\end{equation}
where the reconstruction \(h_{\theta(y)}\) is defined by the variational problem
\begin{equation}\label{eq:foe-ll}
    h_{\theta(y)}(D)
    =
    \argmin_x
    \left\{
        \frac12\|x-D\|^2+\lambda R_{\theta(y)}(x)
    \right\}.
\end{equation}
In the reported experiments, we set \(\lambda=1\) and the learned regularizer is of Field-of-Experts type \cite{goujon2024weaklyconvex},
\begin{equation}\label{eq:foe-regularizer}
    R_{\theta(y)}(x) = \tau \sum_{m=1}^{M} \omega_m \sum_{p\in\Omega}\psi_{\nu_m}\big((K_m*x)_p\big),
\end{equation}
where $
    \Omega:=\{1,\ldots,H\}\times\{1,\ldots,W\},
$
with \(H\) and \(W\) representing the image height and width. Thus,
\((K_m*x)_p\) is the scalar response of the \(m\)-th convolutional expert
at pixel \(p\in\Omega\). Moreover, \(K_m\) are color convolution filters, \(\omega_m>0\) are expert weights, \(\nu_m>0\) are smoothing parameters, and \(\tau>0\) is a global scale. Positivity of the physical parameters is enforced by a componentwise transformation of the unconstrained optimization variables. We use the smooth Huber-type potential
\[
    \psi_\nu(t)  =
    \begin{cases}
        \frac{t^2}{2\nu}, & |t|\le \nu,\\
        |t|-\frac{\nu}{2}, & |t|>\nu.
    \end{cases}
\]
In our implementation, \(M=8\) filters of size \(5\times5\) are used. The lower-level problem is solved using the Polyak--Ruppert variation of SGD. The finest lower-level budget is $500$ iterations, and the multilevel hierarchy is chosen according to the target accuracy parameter \(\varepsilon\). The single-level baseline uses the same upper-level optimization routine, but evaluates all samples at the finest lower-level budget.

We compare SLMC and MLMC for the target accuracies \(\varepsilon^2\in\{1,0.5,0.1\}\). The performance metrics are the validation PSNR, validation SSIM, and the training objective, plotted against cumulative lower-level work. 
The number of MLMC levels is inspired by the theoretical bounds reported in Table \ref{table:comp}. In the experiments, we use 
$ L=\left\lceil 2\varepsilon^{-2}\right\rceil$. At level \(\ell\), \(T_\ell\) denotes the maximum number of lower-level
iterations used to compute the corresponding reconstruction. These iteration
budgets are distinct from the level-dependent image sample sizes \(N_\ell\). 
The schedules used in the experiments are summarized in
Table~\ref{tab:foe-level-budgets}.
The level-dependent sample sizes are $N_\ell=\lfloor 4\varepsilon^{-2}(\ell+1)^{-1}\rfloor$,
$\ell=0,\ldots,L-1$, i.e.\ the number of training images drawn at level $\ell$ in each
upper-level step. 
For a fair comparison, the
single-level baseline uses a batch of $N=\sum_{\ell}N_\ell$ images, all evaluated at the
finest budget $T_{L-1}=500$.

\begin{table}[h!]
\centering
\small
\begin{tabular}{c c c c c}
\hline
\(\varepsilon^2\) & \(L\) & Budgets \((T_\ell)_{\ell=0}^{L-1}\) & MLMC sample sizes \((N_\ell)_{\ell=0}^{L-1}\) & SLMC \(N\) \\
\hline
\(1.0\) & \(2\)  & \((250,500)\) & \((4,2)\) & \(6\) \\
\(0.5\) & \(4\)  & \((125,250,375,500)\) & \((8,4,2,2)\) & \(16\) \\
\(0.1\) & \(20\) & \(T_\ell=25(\ell+1)\) & \makecell{\((40,20,13,10,8,6,5,5,4,4,\)\\ \(3,3,3,2,2,2,2,2,2,2)\)} & \(138\) \\
\hline
\end{tabular}
\caption{Level hierarchy in the RGB FoE experiment: lower-level iteration budgets
\(T_\ell\) (computational fidelity) and MLMC sample sizes \(N_\ell\) per upper-level step.}
\label{tab:foe-level-budgets}
\end{table}

For computational efficiency, the implementation uses a practical batched variant of the estimator analyzed in Section~\ref{sec:method}. At each upper-level step, one independent random direction is sampled per level and shared by all $N_\ell$ images in the corresponding level-wise batch. This introduces dependence between the per-image finite-difference samples and therefore the \(N_\ell^{-1}\) variance bound used in the analysis of Section~\ref{sec:method} does not directly apply to this batched implementation. Further, the direction is obtained as $u=g/\|g\|$, $g\sim\mathcal N(0,I_d)$, and hence is uniformly distributed on the unit sphere. Consequently, the implementation corresponds to a spherical-direction finite-difference estimator (equivalently, to the gradient of a ball-smoothed objective up to the constant factor $d^{-1}$), rather than exactly to the Gaussian estimator of Section~\ref{sec:method}; this constant factor is absorbed into the upper-level stepsize. 
Within each multilevel correction, the coarse and fine evaluations use the same direction and the same image batch. All lower-level solves are initialized with $x_0=A^\dagger D$, without warm starts, and use the prescribed iteration budget $T_\ell$. The resulting multilevel estimate is used directly in a plain SGD update with the cosine stepsize schedule described below. Accordingly, these optimization experiments should be viewed as an empirical evaluation of a practical MLMC variant rather than as a direct numerical verification of the theoretical estimator.


Computational cost is reported as the cumulative number of lower-level PR-SGD iterations. The remaining algorithmic parameters were fixed across all runs. The
upper-level DFO method used 
the smoothing parameter \(\eta=10^{-1}\). As upper-level solver, we use SGD with initial stepsize \(10^{-2}\), using a cosine decay schedule
$$
\gamma_k = \gamma_{\min} + \frac{1}{2} (\gamma_{\max} - \gamma_{\min}) \left( 1 + \cos\left( \frac{\pi k}{K} \right) \right),
$$
with $\gamma_{\max/\min}$ being the maximum/minimum learning rate. 
For the convolution filters and FoE smoothing variables, we use one tenth of this stepsize.
We note that the PR-SGD experiments use the practical stepsize schedule $\alpha_k=10^{-1}(k+1)^{-1/2}$, chosen for numerical stability over the finite iteration budgets considered here, whereas the theory in Section~\ref{subsubsec:pr-sgd} is derived for $\alpha_k\asymp(k+1)^{-1}$. Accordingly, the numerical results are intended as a qualitative illustration of the multilevel behavior rather than as a direct verification of the exact asymptotic rates.

We use \(M=8\) experts, and the FoE parameters were initialized with
\(\omega_m=1\), \(\nu_m=10^{-3}\), and \(\tau=\exp(-1)\). Moreover, \[
\begin{aligned}
y
&=
\big(
    \widetilde K_1,\ldots,\widetilde K_M,
    a_1,\ldots,a_M,
    b_1,\ldots,b_M,
    c
\big),\\
K_m
&=
\Pi_0(\widetilde K_m),\qquad
\omega_m=e^{a_m},\qquad
\nu_m=e^{b_m},\qquad
\tau=e^c.
\end{aligned}
\]
Here \(\Pi_0\) imposes the zero-mean filter constraint separately in each
input colour channel:
\[
    \big(\Pi_0(\widetilde K_m)\big)_{c,i,j}
    =
    (\widetilde K_m)_{c,i,j}
    -
    \frac{1}{k^2}
    \sum_{r=1}^{k}\sum_{s=1}^{k}
    (\widetilde K_m)_{c,r,s},
    \qquad k=5.
\]
Thus, the positive expert weights, smoothing parameters, and global scale
are optimized in logarithmic coordinates, while the filters are optimized
through the zero-mean parametrization.

\begin{table}[h!]
\centering
\begin{tabular}{c c c c c c}
\hline
\(\varepsilon^2\) & Estimator & Cost \((\times 10^6)\) & Final PSNR & Final SSIM & Best PSNR / SSIM \\
\hline
1.0 & MLMC & 1.00 & 27.42 & 0.788 & 27.43 / 0.795 \\
1.0 & SLMC & 1.20 & 26.42 & 0.780 & 26.42 / 0.780 \\
0.5 & MLMC & 2.20 & 26.71 & 0.796 & 26.71 / 0.796 \\
0.5 & SLMC & 3.20 & 26.78 & 0.771 & 26.79 / 0.775 \\
0.1 & MLMC & 5.97 & 26.48 & 0.791 & 26.48 / 0.791 \\
0.1 & SLMC & 8.28 & 25.29 & 0.739 & 25.30 / 0.739 \\
\hline
\end{tabular}
\caption{Validation performance for RGB denoising with the learned FoE
regularizer. SLMC denotes the single-level Monte Carlo estimator. Cost is
measured in cumulative lower-level PR-SGD iterations.}
\label{tab:color_foe_results}
\end{table}

\begin{figure}[h!]
\centering
\includegraphics[width=0.9\textwidth]{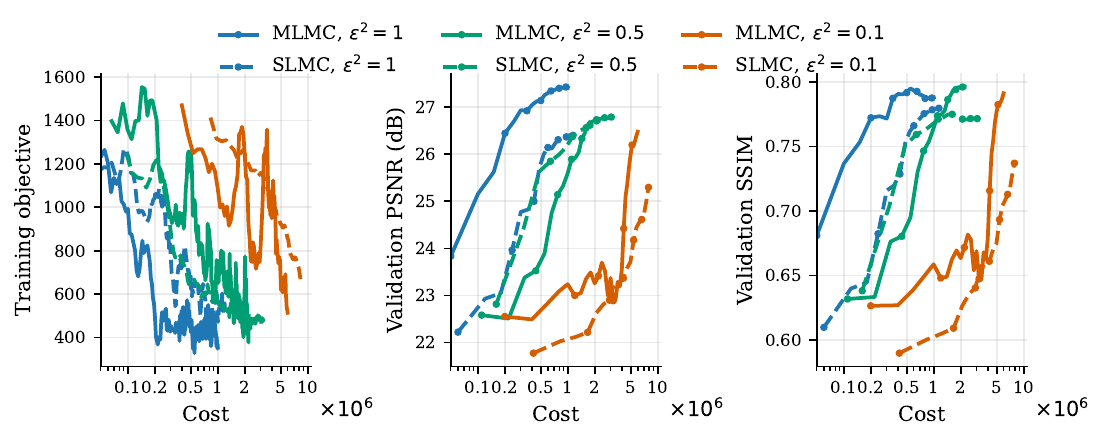}
\caption{Training objective, validation PSNR, and validation SSIM for RGB FoE
denoising. The horizontal axis shows cumulative lower-level work on a
logarithmic scale.}
\label{fig:color_foe_metrics}
\end{figure}

\begin{figure}[h!]
\centering
\includegraphics[width=0.9\textwidth]{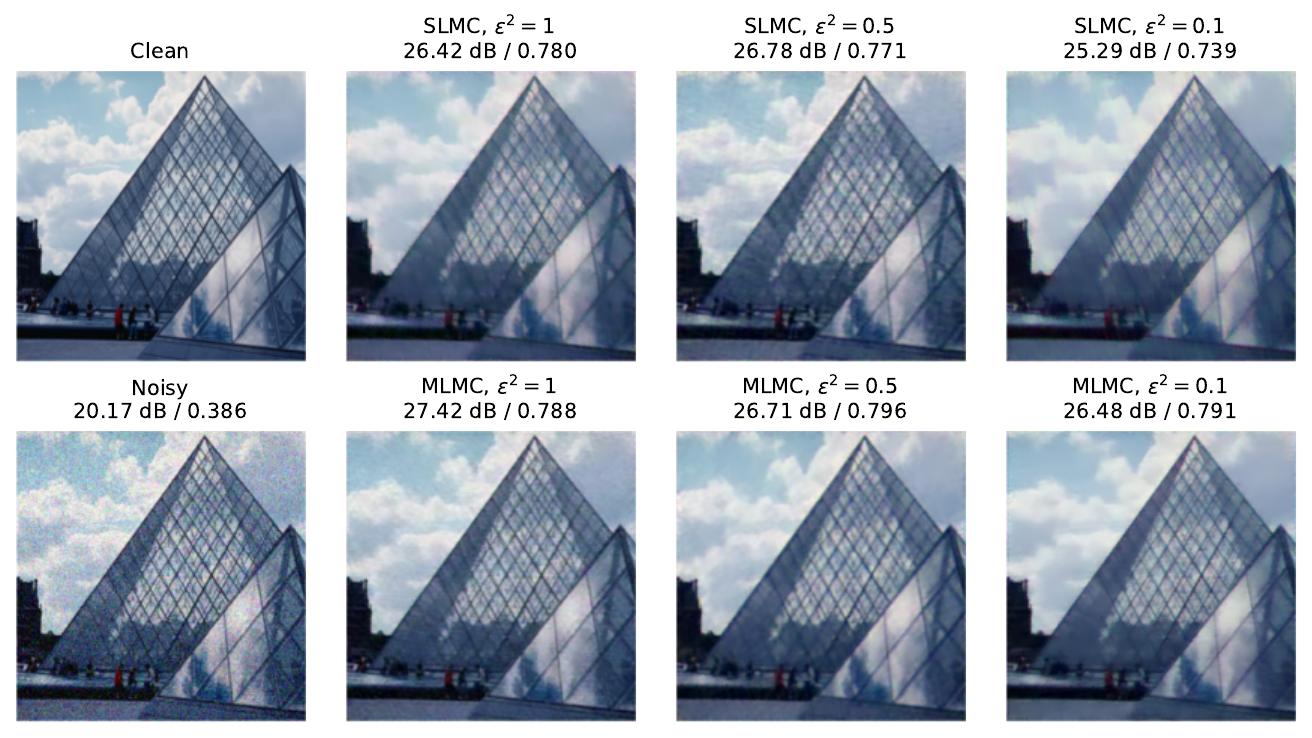}
\caption{Representative RGB denoising reconstructions. The first column shows
the clean and noisy images. For each target accuracy \(\varepsilon^2\), the top
row shows the single-level reconstruction and the bottom row shows the
corresponding MLMC reconstruction.}
\label{fig:color_foe_reconstructions}
\end{figure}

The results are summarized in Table~\ref{tab:color_foe_results} and
Figures~\ref{fig:color_foe_metrics}--\ref{fig:color_foe_reconstructions}. The
multilevel estimator is particularly effective in the color FoE setting,
where the cost of each upper-level query is dominated by repeated lower-level
PR-SGD reconstructions. For \(\varepsilon^2=1\), MLMC improves the final validation
PSNR by about \(1.0\) dB while using less lower-level work than SLMC. For
\(\varepsilon^2=0.5\), both methods obtain comparable PSNR, but MLMC reaches a
higher SSIM at approximately \(69\%\) of the single-level cost. For the most
accurate setting, \(\varepsilon^2=0.1\), MLMC gives the clearest gain: it
improves the final validation PSNR by about \(1.19\) dB and the SSIM by about
\(0.052\), while reducing the cumulative lower-level cost.
This experiment highlights the main practical role of MLMC in large-scale bilevel learning. The learned FoE regularizer is expressive enough to preserve color edges and repeated geometric structures, but it makes each derivative-free upper-level step expensive.
\subsubsection{Empirical illustration of complexity rates}
\label{subsubsec:numerical-complexity-rates}

We next provide a numerical check of the multilevel complexity behavior predicted by Section~\ref{sec:solver-complexity}, using the color image denoising problem with a learned FoE regularizer. The purpose of this experiment is not to estimate asymptotic constants, but to verify that the observed cost trends are consistent with the theoretical comparison between single-level and multilevel sampling. For the denoising problem, we compare the computational work required by SLMC and MLMC to reach prescribed estimator accuracies.  

Our experiments are based on the Gaussian smoothing estimator of Section \ref{sec:method}. We consider targeting different MSEs,  which are specified as $\{1.6, 0.4, 0.15, 0.02\}\times 10^{-2}$. The number of levels $L$ and the per-level sample sizes $N_\ell$ are determined by the theoretical allocation of Table \ref{table:comp}, with the constants calibrated so that the predicted MSE matches each target. The reference gradient $\nabla\Psi_\eta(y)$ was approximated by an independent high-accuracy estimator using $N_{\mathrm{ref}} =10^5 $ samples at the finest level $L_{\mathrm{ref}} = 20$.  The reported MSE at each accuracy point is the average over $S = 25$ independent seeds.  For each solver, the level hierarchy used the iteration budgets $T_\ell =  25(\ell+1)$. For the stepsize of the LLP we choose $\alpha_k = 10^{-1}(k+1)^{-1}$ for the PR-SGD and FI-SGD.

The results are reported in Figure~\ref{fig:rates}. In all tested regimes, MLMC requires less lower-level work than the single-level estimator for comparable accuracy. The empirical curves follow the qualitative behavior predicted by the complexity bounds in Section~\ref{sec:solver-complexity}. 

\begin{figure}[h!]
\centering
\includegraphics[width=0.35\textwidth]{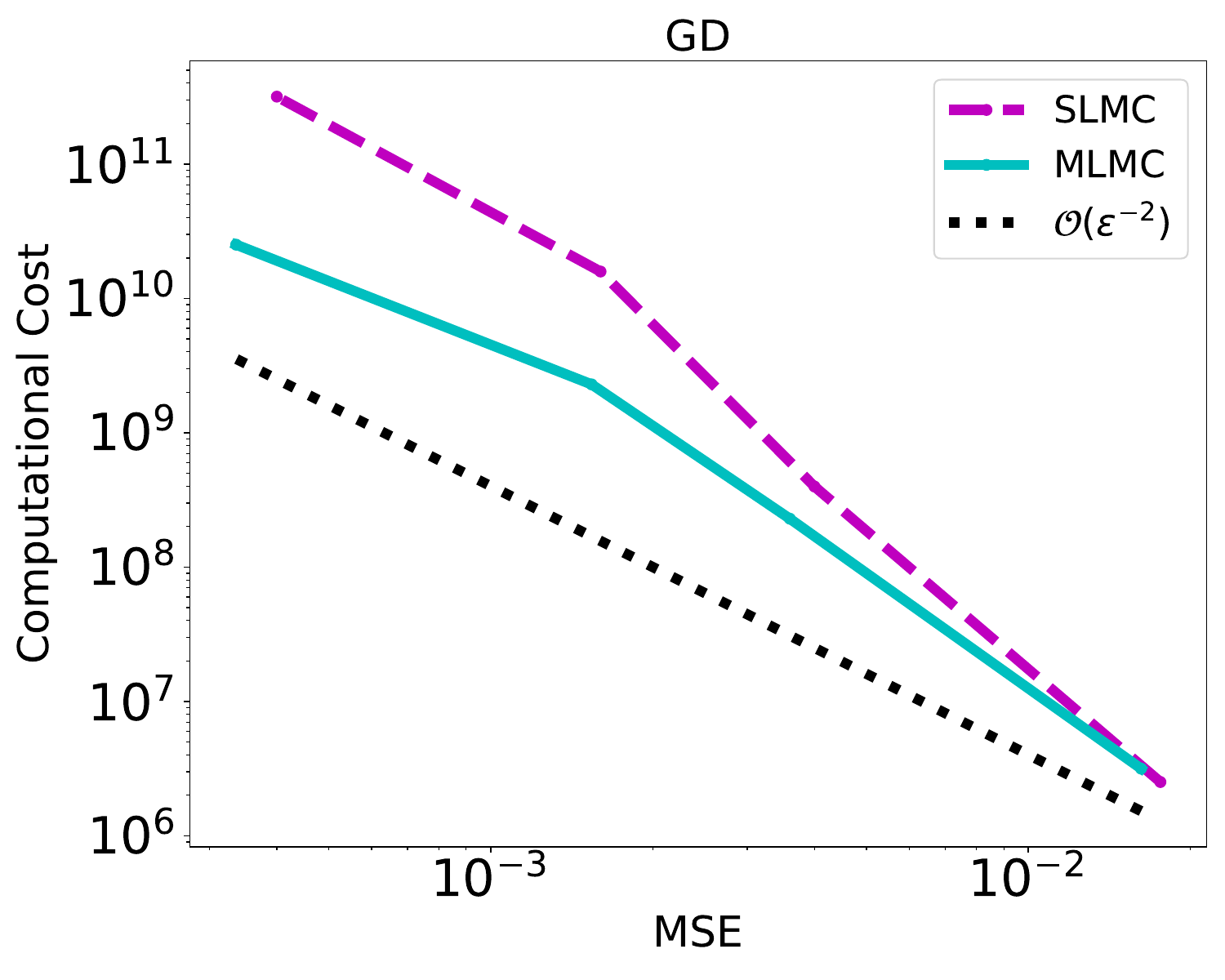} 
\includegraphics[width=0.35\textwidth]{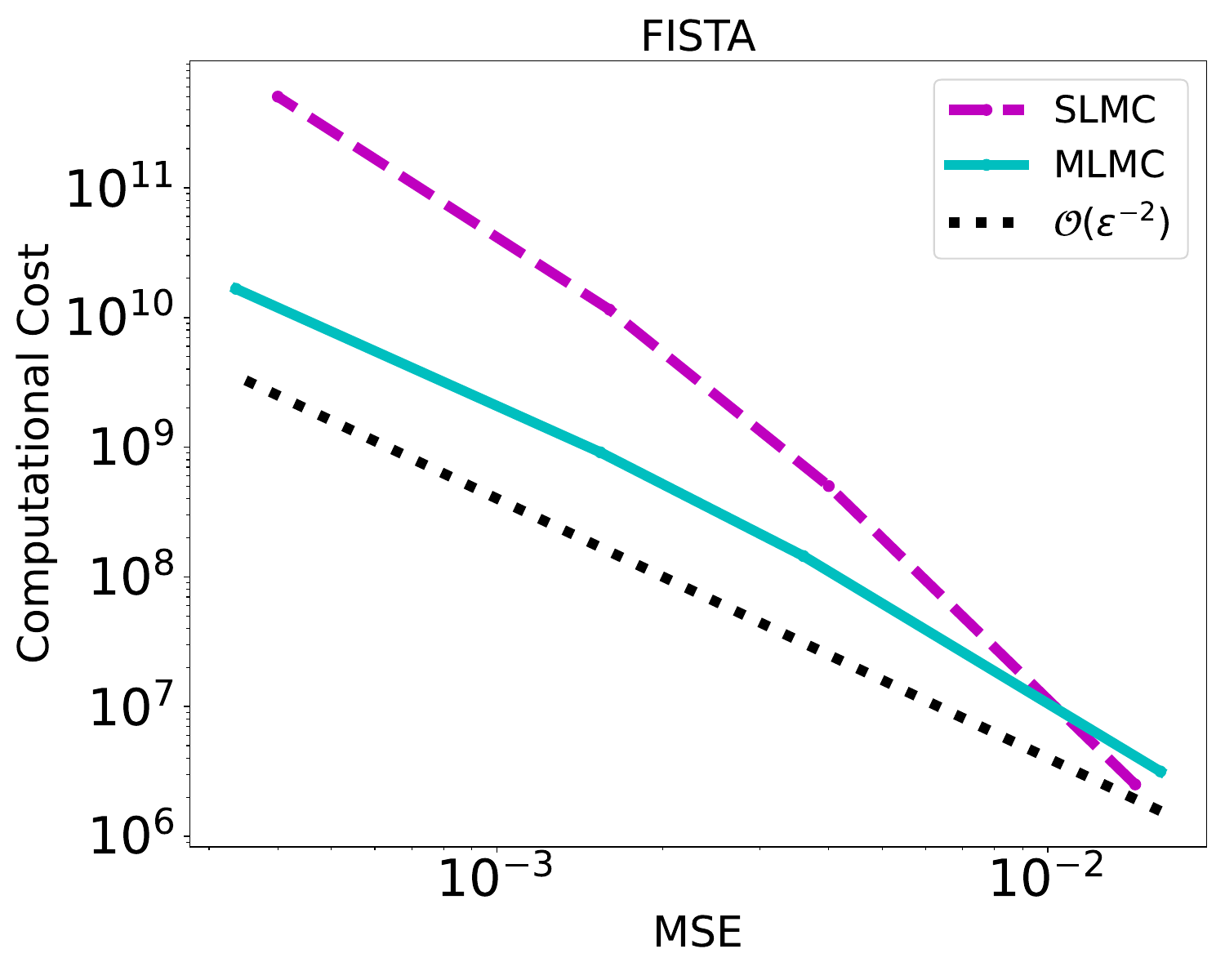}  \\
\includegraphics[width=0.35\textwidth]{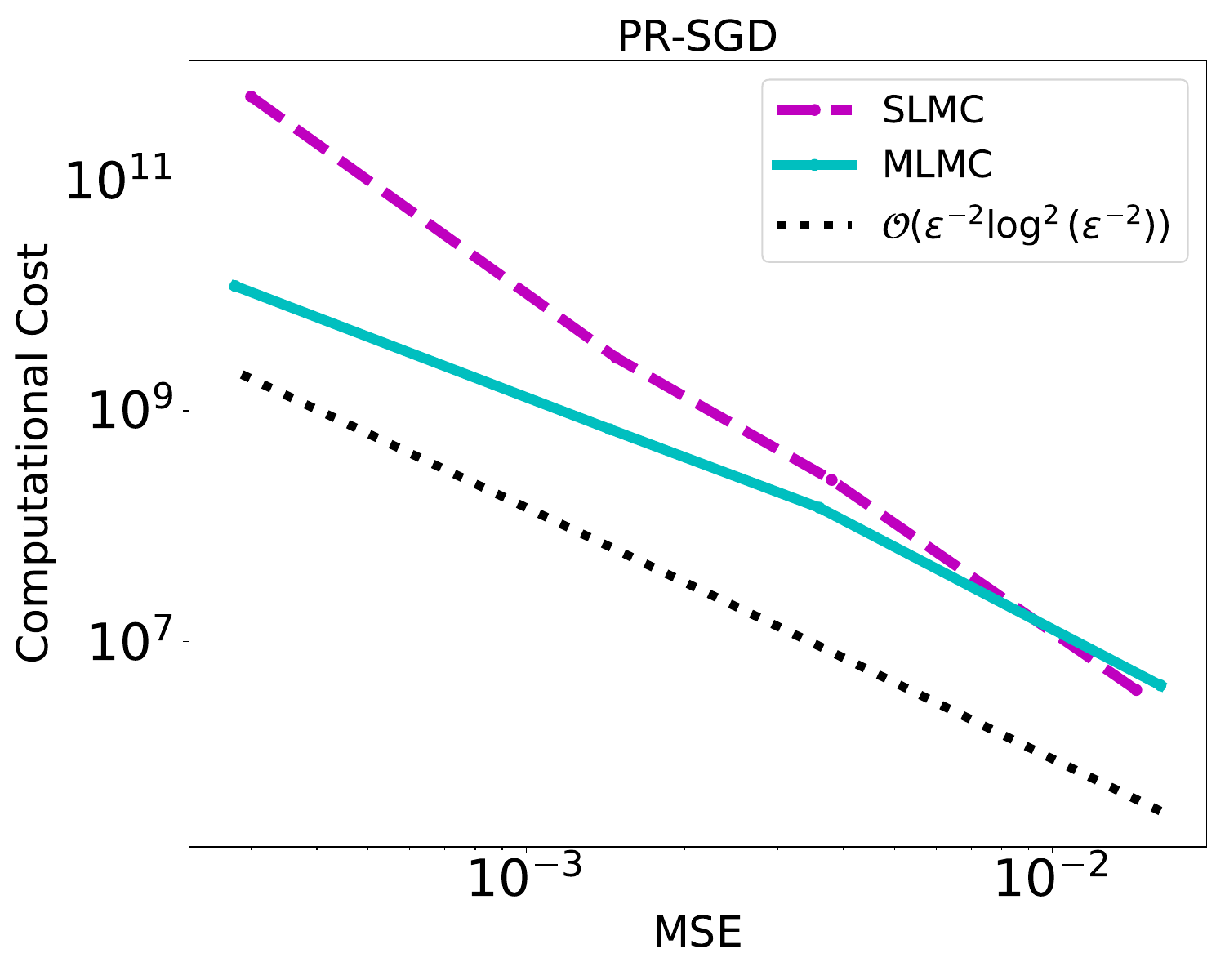} 
\includegraphics[width=0.35\textwidth]{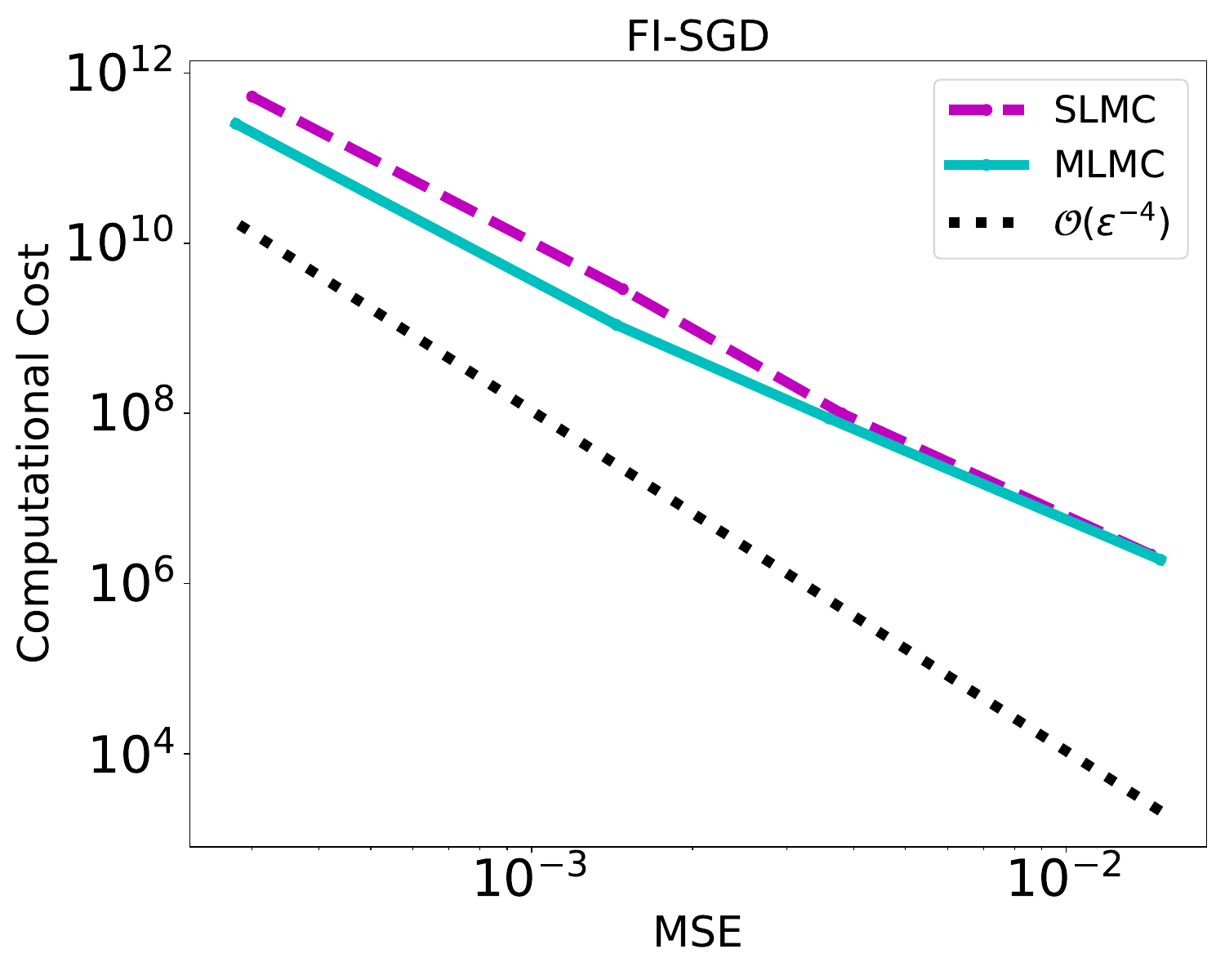} 
\caption{
Complexity rates for achieving a prescribed MSE of the smoothed gradient estimator, comparing SLMC and MLMC across various lower-level solvers. Top left: GD; Top right: FISTA; Bottom left: PR-SGD; Bottom right: FI-SGD. For FI-SGD and PR-SGD, the reported rates correspond to the consecutive hierarchies analyzed in Section~\ref{subsec:sublinear-solvers}.}
\label{fig:rates}
\end{figure}

\subsection{Image reconstruction based on the Radon transform}
\label{subsec:ct-experiment}
Our second numerical example concerns sparse-view computed tomography using the Radon transform. The goal is to recover an image from noisy line-integral measurements. The forward model is
\[
    D = AX + W,
    \qquad W\sim \mathcal N(0,\sigma^2 I),
\]
where \(A:\mathcal X\to\mathbb R^m\) denotes the discrete parallel-beam Radon transform and \(X\in\mathcal X\) is the unknown image. The projection angles are chosen uniformly in \([0,\pi)\). For this experiment, we learn the parameters of a smoothed total variation regularizer with an additional Tikhonov term. The upper-level problem is again
\[
    \min_y
    \mathbb E
    \left[
        \frac12\|h(y,D)-X\|^2
    \right],
\]
where \(h(y,D)\) is defined by the lower-level reconstruction problem
\[
    h(y,D) =\argmin_{x\in\mathcal X}
    \big\{
        \frac12\|Ax-D\|^2
        +
        \lambda\|x\|^2
        +
        \alpha
        \sum_{i,j}
        \sqrt{
            |(\nabla_1 x)_{i,j}|^2
            +
            |(\nabla_2 x)_{i,j}|^2
            +
            \nu^2
        }
    \big\}.
\]

Here \(\nabla_1\) and \(\nabla_2\) denote horizontal and vertical finite differences. The physical regularization parameters are $\lambda,\alpha,\nu>0$, where \(\lambda\) controls the quadratic stabilization, \(\alpha\) controls the
strength of the TV penalty, and \(\nu\) is the smoothing parameter in the differentiable TV approximation. The learned parameter is
$y=(y_1,y_2,y_3)^\top$, with
$\lambda=10^{y_1}$, $\alpha=10^{y_2}$, and $\nu=10^{y_3}$. In the implementation, positivity is enforced through a componentwise parameter transformation. The lower-level problem is solved with FISTA. In the reported CT experiments, we use a restarted FISTA variant, which is more stable for the ill-conditioned Radon operator. 

We use images from the LoDoPaB-CT dataset~\cite{leuschner2021lodopab}, resized to $128\times128$ pixels. Measurements are simulated using $60$ projection angles uniformly spaced in $[0,\pi)$ and additive Gaussian noise with standard deviation $\sigma=0.015$. The parameters are initialized at $y=(-2,-5,-2)$, corresponding to $(\lambda,\alpha,\nu)=(10^{-2},10^{-5},10^{-2})$. The upper-level method uses $\eta=10^{-1}$ and SGD with initial stepsize $10^{-3}$ ($10^{-4}$ for the smoothing parameter), followed by the same cosine schedule as in Section~\ref{subsec:foe-denoising}. The number of levels is $L=\max\{3,\lceil\sqrt{2}\,\varepsilon^{-1}\rceil\}$. The FISTA budgets are geometrically spaced between $50$ and $1000$ iterations, while the level-wise image-batch sizes are $N_0=\lceil3\varepsilon^{-2}\rceil$ and $N_\ell=\lceil4\varepsilon^{-2}(\ell+1)^{-1}\rceil$ for $\ell\ge1$. The single-level baseline uses $\sum_\ell N_\ell$ images at the finest budget. The implementation uses the same batched spherical-direction variant described in Section~\ref{subsec:foe-denoising}.

\begin{table}[h!]
\centering
\small
\begin{tabular}{c c c c c}
\hline
\(\varepsilon^2\) & \(L\) & Budgets \((T_\ell)_{\ell=0}^{L-1}\) & MLMC sample sizes \((N_\ell)_{\ell=0}^{L-1}\) & SLMC \(N\) \\
\hline
\(1.0\) & \(3\) & \((50,224,1000)\) & \((3,2,2)\) & \(7\) \\
\(0.5\) & \(3\) & \((50,224,1000)\) & \((6,4,3)\) & \(13\) \\
\(0.1\) & \(5\) & \((50,106,224,473,1000)\) & \((30,20,14,10,8)\) & \(82\) \\
\hline
\end{tabular}
\caption{Level hierarchy in the CT experiment: FISTA iteration budgets \(T_\ell\) and
MLMC sample sizes \(N_\ell\) per upper-level step.}
\label{tab:ct-level-budgets}
\end{table}

\begin{table}[h!]
\centering
\begin{tabular}{c c c c c c}
\hline
$\varepsilon^2$ & Estimator & Cost $(\times 10^6)$ & Final PSNR & Final SSIM & Best PSNR / SSIM \\
\hline
1.0 & MLMC & 1.26 & 30.79 & 0.798 & 30.80 / 0.805 \\
1.0 & SLMC & 2.80 & 30.90 & 0.803 & 30.90 / 0.803 \\
0.5 & MLMC & 2.03 & 31.02 & 0.803 & 31.02 / 0.807 \\
0.5 & SLMC & 5.20 & 30.91 & 0.800 & 30.91 / 0.800 \\
0.1 & MLMC & 9.52 & 30.49 & 0.798 & 30.49 / 0.798 \\
0.1 & SLMC & 9.84 & 29.78 & 0.748 & 29.78 / 0.748 \\
\hline
\end{tabular}
\caption{Validation performance for the LoDoPaB-CT reconstruction experiment.
SLMC denotes the single-level Monte Carlo estimator. Cost is measured in
cumulative lower-level FISTA iterations.}
\label{tab:ct_tv_results}
\end{table}

\begin{figure}[h!]
\centering
\includegraphics[width=0.9\textwidth]{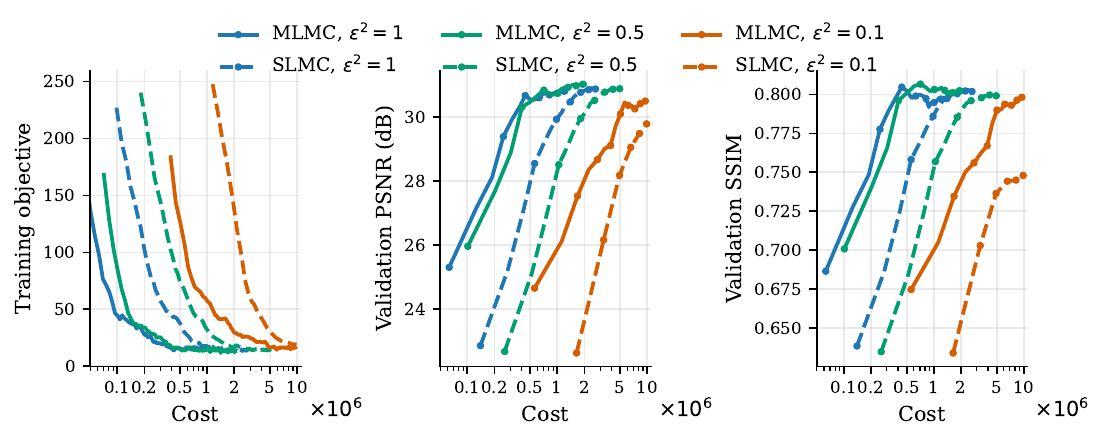}
\caption{Training objective, validation PSNR, and validation SSIM for the
CT-TV experiment. The horizontal axis shows cumulative lower-level work on a
logarithmic scale.}
\label{fig:ct_tv_metrics}
\end{figure}

\begin{figure}[h!]
\centering
\includegraphics[width=0.9\textwidth]{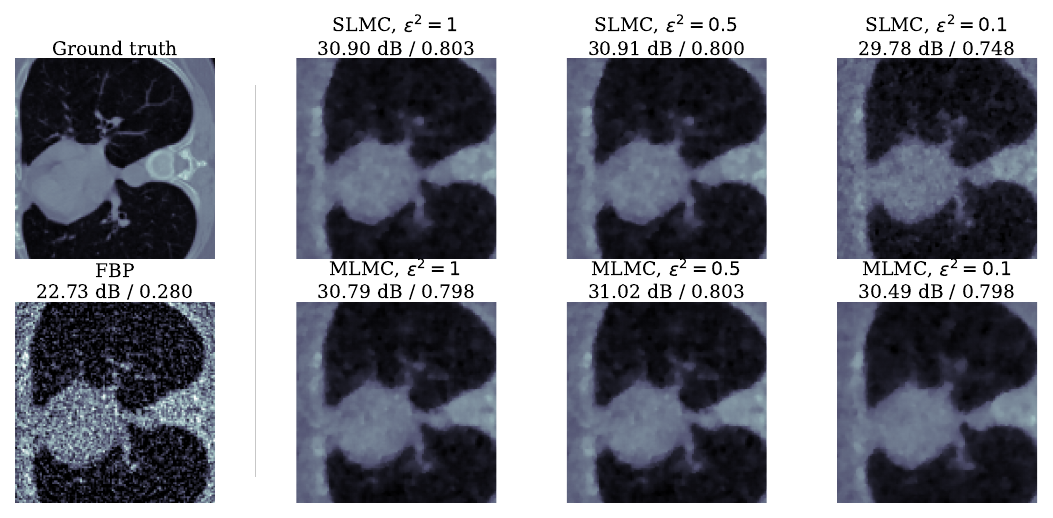}
\caption{Representative CT reconstructions. The reference image and filtered
backprojection (FBP) reconstruction are shown separately from the learned
TV reconstructions. For each accuracy level $\varepsilon^2$, we compare the
single-level estimator with the MLMC estimator using the learned
regularization parameters.}
\label{fig:ct_tv_reconstructions}
\end{figure}

The results are shown in Table~\ref{tab:ct_tv_results} and
Figures~\ref{fig:ct_tv_metrics}--\ref{fig:ct_tv_reconstructions}. The
filtered backprojection reconstruction contains the characteristic streaking
and noise amplification caused by sparse and noisy projection data. The learned
TV reconstructions remove most of these artifacts while preserving the main
anatomical structures, confirming that the bilevel procedure learns
meaningful regularization parameters for the CT inverse problem.

The cost-normalized curves in Figure~\ref{fig:ct_tv_metrics} are consistent
with the predicted multilevel advantage. For $\varepsilon^2=1$, MLMC reaches a
validation quality comparable to SLMC using $1.26\times 10^6$ lower-level
iterations, compared with $2.80\times 10^6$ for SLMC. For
$\varepsilon^2=0.5$, the advantage is stronger: MLMC obtains the best final
validation result, $31.02$ dB PSNR and $0.803$ SSIM, at approximately
$39\%$ of the single-level cost. This is the best trade-off in the reported
CT runs. For the strictest accuracy, $\varepsilon^2=0.1$, the cost difference
is smaller because both estimators spend most of their work at or near the
finest lower-level solve. Nevertheless, MLMC achieves higher final validation quality in this regime, improving the final validation PSNR by about $0.71$ dB and the
SSIM by about $0.05$ compared to SLMC.

\section{Conclusion}
\label{sec:conclusion}

We introduced a multilevel derivative-free method for stochastic bilevel optimization problems arising in inverse problems. The method combines Gaussian smoothing of the upper-level objective with a multilevel Monte Carlo gradient estimator built from approximate lower-level solves. This construction allows one to exploit hierarchies of lower-level accuracies while avoiding derivatives of the solution map with respect to the hyperparameters. The convergence analysis separates the outer derivative-free optimization from the lower-level approximation properties. Under a controlled mean-square error condition on the gradient estimator, we obtained stationarity guarantees for
the smoothed upper-level objective. The multilevel estimator was then analyzed in terms of three solver-dependent quantities: the lower-level bias, the mean squared errors of coupled level differences, and the cost per level. 

A main conclusion is that the benefit of multilevel sampling is determined not only by the accuracy of the lower-level solver, but by the decay of the coupled level differences. Linearly convergent deterministic solvers lead to the canonical multilevel improvement over single-level sampling. For stochastic
lower-level solvers, final-iterate SGD and Polyak--Ruppert averaged SGD have the same mean-square bias order, but different level-difference behavior. Consequently, for the consecutive hierarchy considered in this work, final-iterate SGD does not improve the asymptotic complexity exponent, whereas Polyak--Ruppert averaging yields a genuine multilevel speedup. The numerical experiments on image denoising and computed tomography support the practical relevance of the proposed estimator. In both settings, the multilevel method reduces the lower-level work required to obtain comparable upper-level performance, while preserving the quality of the learned regularization parameters.

Several extensions remain open. First, the analysis could be refined to cover adaptive choices of the smoothing parameter, sample sizes, lower-level accuracies and level schedules during the outer optimization. Second, the relation between the smoothed stationarity measure used here and non-smooth stationarity notions for the original upper-level objective deserves further study. Extending the framework to richer stochastic lower-level solvers, alternative derivative-free upper-level solvers, and to more complex learned regularization models would broaden its applicability to large-scale inverse problems. 

\section*{Acknowledgments}
NKC is supported by a City University of Hong Kong Start-up Grant 7200809, a Hong Kong ECS-RGC Grant 21300826 and a Hong Kong GRF-RGC Grant No. 11309626.
\appendix
\section*{Appendix}
\section{Additional details}
 \subsection{Uniformity in the upper-level parameter}
\label{sec:parameter-transformation}
The definitions of \(b_\ell\) and \(v_{\ell+1}\) involve a supremum over \(y\in\mathbb R^d\). At first sight, this appears restrictive because the Gaussian smoothing estimator evaluates the lower-level problem at perturbed points \(y+\eta U\), which are unbounded. In our implementation for the inverse-problem applications, however, the variable \(y\) used by the outer optimization method is typically not the physical regularization parameter itself. Instead, the physical hyperparameter is obtained through a smooth transformation into an admissible set. We therefore distinguish between the optimization variable
\(y\in\mathbb R^d\) and the physical lower-level hyperparameter $p=T(y)\in\mathcal P$, where \(\mathcal P\subset\mathbb R^m\) is a compact set of admissible hyperparameters and $T:\mathbb R^d\to\operatorname{int}(\mathcal P)$ is a continuously differentiable, globally Lipschitz map. The lower-level
regularizer depends on \(y\) only through \(T(y)\), that is, $\mathcal S_y := \mathcal S_{T(y)}$. Consequently, the lower-level problem takes the form
\begin{equation}\label{eq:transformed-llp}
h(y,D)\in\argmin_{x\in\mathcal X}\big\{\mathcal L(Ax,D)+\mathcal S_{T(y)}(x)\big\}\,.  
\end{equation}
Since \(T(\mathbb R^d)\subset\mathcal P\), all lower-level solves are performed with physical hyperparameters in the compact set \(\mathcal P\), including at Gaussian-perturbed points \(y+\eta U\). Thus, the uniform bounds in \(b_\ell\) and \(v_{\ell+1}\) should be understood as uniformity over the induced compact physical hyperparameter set \(\mathcal P\). More precisely, the constants in the lower-level convergence estimates are assumed to be uniform for all \(p\in\mathcal P\), and hence for all \(y\in\mathbb R^d\) through \(p=T(y)\).

Suppose that \(g\) satisfies Assumption~\ref{ass:lipschitz-loss} and that the lower-level solution map is Lipschitz continuous with respect to the physical hyperparameter \(p\) uniformly on \(\mathcal P\), i.e.,
\[
    \|h_p(D)-h_{p'}(D)\|_{\mathcal X} \le \Lip_{0}(h,D)\|p-p'\|, \qquad p,p'\in\mathcal P,
\]
with $\mathbb E[\Lip_{0}(g,X)\Lip_{0}(h,D)]<\infty$. If \(T:\mathbb R^d\to\mathcal P\) is globally Lipschitz with constant \(\Lip_0(T)\),
then the reduced solution map \(y\mapsto h(y,D):=h_{T(y)}(D)\) satisfies
\[\|h(y_1,D)-h(y_2,D)\|_{\mathcal X}\le \Lip_{0}(h,D)\Lip_0(T)\|y_1-y_2\|.\]
Consequently,
\[|\Psi(y_1)-\Psi(y_2)|\le \mathbb E\big[\Lip_{0}(g,X)\|h(y_1,D)-h(y_2,D)\|_{\mathcal X}\big] \le \Lip_0(T)\,\mathbb E[\Lip_{0}(g,X)\Lip_{0}(h,D)]\,\|y_1-y_2\|\]
and Assumption~\ref{ass:lipschitz-upper} follows from the stability of the lower-level solution map on the compact physical hyperparameter set \(\mathcal P\). Analogously, the same argument applies to the approximate solution maps $\widehat h_\ell$, provided their corresponding stability constants are uniformly bounded in $\ell$.

For the theoretical analysis we may assume a bounded globally Lipschitz parameterization \(T:\mathbb R^d\to\operatorname{int}(\mathcal P)\), where
\(\mathcal P\) is compact. An exemplary component-wise choice is
\[T_j(y)=\lambda_{\min,j}+(\lambda_{\max,j}-\lambda_{\min,j})\frac{1}{1+\exp(-y_j)}\]
embedding $\R^{d}$ in the hypercube $\prod_{j=1}^{d}[\lambda_{\min,j},\lambda_{\max,j}]$. In our implementations where only positivity is enforced, one often uses maps such as \(T_j(y)=10^{y_j}+\lambda_{\min,j}\). This map is locally, but not globally, Lipschitz on \(\mathbb R\). Hence, the parametrization is not covered by the preceding global-Lipschitz sufficient condition. Its theoretical treatment would require additional localization/moment assumptions, as in Section~\ref{sec:prox-extensions}, or a bounded transformation.

\subsection{Omitted proofs for Section~\ref{sec:method}}
\label{app:method-proofs}
\begin{proof}[Proof of Lemma~\ref{lem:gradient-mse}]
We prove the single-level and multilevel estimates separately.

\emph{i) Single-level estimator.}
Fix \(y\in\mathcal Y\), a level \(\ell\ge0\), and sample size \(N\ge1\). Define
\[
    Q_\ell(y;U,Z)
    :=
    \frac{
        G_\ell(y+\eta U;Z)-G_\ell(y;Z)
    }{\eta}U
\]
and
\[
    Q_\infty(y;U,Z)
    :=
    \frac{
        G_\infty(y+\eta U;Z)-G_\infty(y;Z)
    }{\eta}U .
\]
Then
\[
    \GhatSL_{\eta,\ell,N}(y)
    =
    \frac1N\sum_{i=1}^N Q_\ell(y;U_i,Z_i)
    \qquad \text{and}\qquad 
    \nabla\Psi_\eta(y)
    =
    \mathbb E[Q_\infty(y;U,Z)].
\]
We use the decomposition
\[\GhatSL_{\eta,\ell,N}(y)-\nabla\Psi_\eta(y)= (\GhatSL_{\eta,\ell,N}(y)-\mathbb E[Q_\ell(y;U,Z)])+(\mathbb E[Q_\ell(y;U,Z)]-\mathbb E[Q_\infty(y;U,Z)]).
\]
to obtain
\[    \mathbb E
    \Big[
        \big\|
            \GhatSL_{\eta,\ell,N}(y)-\nabla\Psi_\eta(y)
        \big\|^2
    \Big]\le
    2\mathbb E
    \Big[
        \big\|
            \GhatSL_{\eta,\ell,N}(y)
            -
            \mathbb E[Q_\ell(y;U,Z)]
        \big\|^2
    \Big]+
    2\big\|
        \mathbb E[Q_\ell(y;U,Z)-Q_\infty(y;U,Z)]
    \big\|^2 .
\]
For the first term, independence of the samples gives
\[
    \mathbb E
    \Big[
        \big\|
            \GhatSL_{\eta,\ell,N}(y)
            -
            \mathbb E[Q_\ell(y;U,Z)]
        \big\|^2
    \Big]
    =
    \frac1N
    \mathbb E
    \Big[
        \big\|
            Q_\ell(y;U,Z)-\mathbb E[Q_\ell(y;U,Z)]
        \big\|^2
    \Big]\le
    \frac1N
    \mathbb E
    \left[
        \|Q_\ell(y;U,Z)\|^2
    \right]
    \le
    \frac{C_{\mathrm{fd}}}{N}.
\]
where we have used the finite-difference moment bound in Assumption~\ref{ass:fd-moment-bounds}. It remains to control the bias term. By definition,
\[
    Q_\ell(y;U,Z)-Q_\infty(y;U,Z)
    =
    \frac{1}{\eta}
    \big[G_\ell(y+\eta U;Z)-G_\infty(y+\eta U;Z)
        -
        G_\ell(y;Z)+G_\infty(y;Z)
    \big]U .
\]
Using the Lipschitz property of \(g\) in its first argument,
\[|G_\ell(y+\eta U;Z)-G_\infty(y+\eta U;Z)|\le \Lip_{0}(g,X) \|\widehat h_\ell(y+\eta U,D)-h(y+\eta U,D)\|_{\mathcal X},
\]
and similarly
\[
    |G_\ell(y;Z)-G_\infty(y;Z)|
    \le
    \Lip_{0}(g,X)
    \|\widehat h_\ell(y,D)-h(y,D)\|_{\mathcal X}.
\]
Therefore,
\[
    \|Q_\ell(y;U,Z)-Q_\infty(y;U,Z)\|
    \le
    \frac{\Lip_{0}(g,X)\|U\|}{\eta}
    \big(\|\widehat h_\ell(y+\eta U,D)-h(y+\eta U,D)\|_{\mathcal X}+
        \|\widehat h_\ell(y,D)-h(y,D)\|_{\mathcal X}
    \big).
\]
By Cauchy--Schwarz, we have
\begin{align*} 
&\textcolor{black}{\mathbb E[\|U\|^2 \Lip_{0}(g,X)^2 \|\widehat h_{\ell}(y+\eta U,D)
          - h(y+\eta U,D)\|_{\mathcal X}^2]}\\
          &\textcolor{black}{=\mathbb E[\|U\|^2 \mathbb E[\Lip_{0}(g,X)^2 \|\widehat h_{\ell}(y+\eta U,D)
          -h(y+\eta U,D)\|_{\mathcal X}^2 \mid U]] \le \mathbb E[\|U\|^2] b_{\ell}}\,,
\end{align*}
where we have used $\mathbb E[\Lip_{0}(g,X)^2 \|\widehat h_{\ell}(y+\eta u,D) - h(y+\eta u,D)\|_{\mathcal X}^2 \mid U=u] \le b_{\ell}$ for all $u\in \R^d$. For the second term, we obtain a similar bound using the independence of $U$ and $(X,D)$. By Jensen's inequality, 
\[
    \left\|
        \mathbb E[Q_\ell(y;U,Z)-Q_\infty(y;U,Z)]
    \right\|^2
    \le
    \frac{\textcolor{black}{2d}}{\eta^2}b_\ell
\]
Combining the variance and bias estimates yields
\[
    \mathbb E
    \Big[
        \big\|
            \GhatSL_{\eta,\ell,N}(y)-\nabla\Psi_\eta(y)
        \big\|^2
    \Big]
    \le
    C_{d,\eta}
    \big(
        \frac1N+b_\ell
    \big)
\]
for some constant $C_{d,\eta}>0$ possibly depending on dimension and $\eta$ through 
\[\textcolor{black}{C_{d,\eta}\propto C_{\rm fd} + d\eta^{-2}}\,.\]

\emph{ii) Multilevel estimator.}
We now prove the multilevel estimate. Define
\[
    Q_0(y;U,Z)
    :=
    \frac{
        G_0(y+\eta U;Z)-G_0(y;Z)
    }{\eta}U
\]
and, for \(\ell=0,\ldots,L-1\),
\[
    \Delta Q_{\ell+1}(y;U,Z)
    :=
    \Bigg[
    \frac{
        G_{\ell+1}(y+\eta U;Z)
        -
        G_\ell(y+\eta U;Z)
    }{\eta}
    -
    \frac{
        G_{\ell+1}(y;Z)
        -
        G_\ell(y;Z)
    }{\eta}
    \Bigg]U .
\]
Then
\[
    \GhatML_{\eta,L,\mathbf N}(y)
    =
    \frac1{N_0}\sum_{i=1}^{N_0}Q_0(y;U_i^{(0)},Z_i^{(0)})
    +
    \sum_{\ell=0}^{L-1}
    \frac1{N_{\ell+1}}
    \sum_{i=1}^{N_{\ell+1}}
    \Delta Q_{\ell+1}
    (y;U_i^{(\ell+1)},Z_i^{(\ell+1)}).
\]
By the telescoping identity,
\[
    \mathbb E[\GhatML_{\eta,L,\mathbf N}(y)]
    =
    \nabla\Psi_{\eta,L}(y).
\]
Hence
\[
\begin{aligned}
    \GhatML_{\eta,L,\mathbf N}(y)-\nabla\Psi_\eta(y)
    =
    \Big(
        \GhatML_{\eta,L,\mathbf N}(y)
        -
        \nabla\Psi_{\eta,L}(y)
    \Big)
    +
    \Big(
        \nabla\Psi_{\eta,L}(y)
        -
        \nabla\Psi_\eta(y)
    \Big)\,,
\end{aligned}
\]
and since $\mathbb E[\GhatML_{\eta,L,\mathbf N}(y)] = \nabla\Psi_{\eta,L}(y)$, we can write 
\[
\mathbb E\Big[\|\GhatML_{\eta,L,\mathbf N}(y)-\nabla\Psi_\eta(y)\|^2\Big] = \mathbb E\Big[\|\GhatML_{\eta,L,\mathbf N}(y)-\nabla\Psi_{\eta,L}(y)\|^2\Big] + \|\nabla\Psi_{\eta,L}(y)-\nabla\Psi_\eta(y)\|^2
\]
Hence, it suffices to bound the
sampling variance and the finest-level bias. Since the samples are independent across levels,
\[
    \mathbb E
    \Big[
        \|
            \GhatML_{\eta,L,\mathbf N}(y)
            -
            \nabla\Psi_{\eta,L}(y)
        \|^2
    \Big]\le
    \frac{1}{N_0}
    \mathbb E\|Q_0(y;U,Z)\|^2
    +
    \sum_{\ell=0}^{L-1}
    \frac{1}{N_{\ell+1}}
    \mathbb E\|\Delta Q_{\ell+1}(y;U,Z)\|^2.
\]
The level-zero term is bounded by Assumption~\ref{ass:fd-moment-bounds},
\[
    \frac{1}{N_0}\mathbb E\|Q_0(y;U,Z)\|^2
    \le
    \frac{C_{\rm fd}}{N_0}.
\]
For the correction term, the Lipschitz property of \(g\) gives
\[
    |G_{\ell+1}(y+\eta U;Z)-G_\ell(y+\eta U;Z)|\le
    \Lip_0(g,X)
    \|\widehat h_{\ell+1}(y+\eta U,D)
      -\widehat h_\ell(y+\eta U,D)\|_{\mathcal X},
\]
and
\[
    |G_{\ell+1}(y;Z)-G_\ell(y;Z)|
    \le
    \Lip_0(g,X)
    \|\widehat h_{\ell+1}(y,D)-\widehat h_\ell(y,D)\|_{\mathcal X}.
\]
Consequently,
\[
\begin{aligned}
    \|\Delta Q_{\ell+1}(y;U,Z)\|
    \le
    \frac{\Lip_{0}(g,X)\|U\|}{\eta}
    \Big(
        &\|\widehat h_{\ell+1}(y+\eta U,D)
          -\widehat h_\ell(y+\eta U,D)\|_{\mathcal X}
        \\
        &+
        \|\widehat h_{\ell+1}(y,D)
          -\widehat h_\ell(y,D)\|_{\mathcal X}
    \Big)\,.
\end{aligned}
\]
By Cauchy--Schwarz, we have
\begin{align*} 
&\textcolor{black}{\mathbb E[\|U\|^2 \Lip_{0}(g,X)^2 \|\widehat h_{\ell+1}(y+\eta U,D)
          -\widehat h_\ell(y+\eta U,D)\|_{\mathcal X}^2]}\\
          &\textcolor{black}{=\mathbb E[\|U\|^2 \mathbb E[\Lip_{0}(g,X)^2 \|\widehat h_{\ell+1}(y+\eta U,D)
          -\widehat h_\ell(y+\eta U,D)\|_{\mathcal X}^2 \mid U]] \le \mathbb E[\|U\|^2] v_{\ell+1}}\,,
\end{align*}
where we have used $\mathbb E[\Lip_{0}(g,X)^2 \|\widehat h_{\ell+1}(y+\eta u,D) -\widehat h_\ell(y+\eta u,D)\|_{\mathcal X}^2 \mid U=u] \le v_{\ell+1}$ for all $u\in \R^d$. For the second term, we use independence, to deduce 
\[ \textcolor{black}{\mathbb E[\|U\|^2 \Lip_{0}(g,X)^2 \|\widehat h_{\ell+1}(y,D)
          -\widehat h_\ell(y,D)\|_{\mathcal X}^2]} \le \mathbb E[\|U\|^2] v_{\ell+1}\]
Together, this implies
\[
    \mathbb E\|\Delta Q_{\ell+1}(y;U,Z)\|^2
    \le
    \frac{\textcolor{black}{2d}}{\eta^2}
    v_{\ell+1}\,.
\]
It follows that
\[
    \mathbb E
    \left[
        \left\|
            \GhatML_{\eta,L,\mathbf N}(y)
            -
            \nabla\Psi_{\eta,L}(y)
        \right\|^2
    \right]
    \le
    C_{d,\eta}
    \left(
        \frac1{N_0}
        +
        \sum_{\ell=0}^{L-1}
        \frac{v_{\ell+1}}{N_{\ell+1}}
    \right)\,,
\]
for some constant depending on dimension $d$ and $\eta$. It remains to bound the bias
\(\nabla\Psi_{\eta,L}(y)-\nabla\Psi_\eta(y)\). This is the same estimate as in
the single-level case with \(\ell=L\), and gives
\[
    \|\nabla\Psi_{\eta,L}(y)-\nabla\Psi_\eta(y)\|^2
    \le
    \frac{2d}{\eta^2}b_L.
\]
Combining the sampling and bias estimates yields
\[
    \mathbb E
    \left[
        \left\|
            \GhatML_{\eta,L,\mathbf N}(y)-\nabla\Psi_\eta(y)
        \right\|^2
    \right]
    \le C_{d,\eta}
    \left(
        \frac1{N_0}
        +
        \sum_{\ell=0}^{L-1}
        \frac{v_{\ell+1}}{N_{\ell+1}}
        +
        b_L
    \right)\,,
\]
where $C_{d,\eta}\propto C_{\rm fd} + \textcolor{black}{d\eta^{-2}}$ 
similarly as in the single-level case. Note that the numerical constant is independent of $L$. This proves the claim.
\end{proof}

\newpage





\bibliographystyle{plain}
\bibliography{MLMC.bib}

\end{document}